\numberwithin{equation}{section}
\newtheorem{theorem}{Theorem}[section]
\newtheorem{corollary}[theorem]{Corollary}
\newtheorem{lemma}[theorem]{Lemma}
\newtheorem{definition}[theorem]{Definition}
\newtheorem{remark}[theorem]{Remark}
\newtheorem{prop}[theorem]{Proposition}
\numberwithin{equation}{section}
\newcommand{\R}{\mathbb{R}}
\newcommand{\eps}{\varepsilon}
\newcommand{\Om}{\Omega}
\newcommand{\skelet}{\mathcal{S}(\Omega)}
\newcommand{\ridge}{\mathcal{R}(\Omega)}
\newcommand{\prt}{\partial}
\newcommand{\lmb}{\lambda}
\newcommand{\dvg}{\text{div}}
\newcommand{\hess}{\text{H}}
\newcommand{\dist}{\text{dist}}
\newcommand{\W}{W^{2,p}_0(\Om)}
\newcommand{\C}{C_c^\infty(\Om)}
\newcommand{\card}{\text{card}}
\newcommand{\m}{\text{max}}
\newcommand{\n}{\vec{n}}
\renewenvironment{thebibliography}[1]{
  \begin{oldthebibliography}{#1}
    \setlength{\itemsep}{0.2em}
    \setlength{\parskip}{0.2em}
}
{
  \end{oldthebibliography}
}
\title{Sharp second order inequalities with distance function to the boundary and applications to a p-Biharmonic singular problem}
\author[1, 2]{Cristian Cazacu} 
\author[1, 2]{Teodor Rugină}
\affil[1]{Faculty of Mathematics and Computer Science, University of Bucharest,
		14 Academiei Street, 
		010014 Bucharest, Romania.\newline Emails: cristian.cazacu@fmi.unibuc.ro, teorugina@yahoo.com}
\affil[2]{Gheorghe Mihoc-Caius Iacob Institute of Mathematical
	Statistics and Applied Mathematics of the Romanian Academy\\
	050711 Bucharest, Romania}
    \date{}
\begin{document}
\maketitle\raggedright
\noindent	\textit{2020 Mathematics Subject Classification}: 35A15, 35A23, 35B25, 35J91.  \\
\raggedright
\textit{Keywords}: Hardy Inequality; Rellich Inequality; p-Biharmonic operator; distance to the boundary; sharp constants. 

\begin{abstract}
In this paper, we prove generalizations to the $L^p-$setting of the Hardy-Rellich inequalities on domains of $\R^N$ with singularity given by the distance function to the boundary. The inequalities we obtain are either sharp in bounded domains, where we provide concrete minimizing sequences, or give a new bound for the sharp constant, while also depending on the geometric properties of the domain and its boundary. We also give applications to the existence and non-existence of solutions for a singular problem using variational methods and a Pohozaev identity.
\end{abstract}

\section{Introduction}
 Functional inequalities with first and second order derivatives are fundamental in mathematical analysis, theory of function spaces, and have crucial importance, especially to the study of second and fourth order PDEs or eventually to second order PDEs for systems where the unknowns are curl-free or divergence-free vector fields. 

Let us consider $N\geq 2$, $\Omega\subset\R^N$ be an open domain with sufficiently smooth boundary $\prt\Om$ and the potentials $V, W, Z$, respectively, which are supposed to be locally integrable, non-trivial and nonnegative. Typically, a first order inequality of the form 
\begin{equation}\label{HI}
\int_{\Omega} |\nabla u|^p dx \geq \lambda \int_\Omega V(x)|u|^p dx, \quad u\in C_c^\infty(\Omega),
\end{equation}
is referred to as a Hardy-type inequality. A second order inequality of the form 
\begin{equation}\label{R}
\int_{\Omega} |\Delta u|^p dx \geq \lambda \int_\Omega W(x)|u|^p dx, \quad u\in C_c^\infty(\Omega), 
\end{equation}
is referred to as a Rellich-type inequality. The Hardy-Rellich inequality combines features of both previous inequalities:
\begin{equation}\label{HR}
\int_{\Omega} |\Delta u|^p dx \geq \lambda \int_\Omega Z(x)|\nabla u|^p dx, \quad u\in C_c^\infty(\Omega). 
\end{equation}

  In \eqref{HI}-\eqref{HR} $\lambda$'s denote positive constants depending in general on the dimension $N$, $p$, $\Omega$.
We denote by $\lambda^\sharp(p, \Omega, V)$, $\lambda^\sharp(p, \Omega, W)$ and $\lambda^\sharp(p, \Omega, Z)$,  the sharp constants in  \eqref{HI}, \eqref{R} and \eqref{HR} respectively, meaning  the greatest constants $\lambda$'s for which \eqref{HI}, \eqref{R} and \eqref{HR} hold.
  
A special attention is drawn when the potentials $V, W, Z$ become unbounded (equiv. singular), in which case they become more interesting from the mathematical point of view, aspects correlated also with what physics dictates, for instance, in quantum mechanics, where Schrodinger operators with Couloumb type potentials appear. Particularly, when singular potentials are involved in the analysis, there are various questions to be addressed, such as: the existence of Hardy and Rellich type inequalities, quantification of the sharp constants, existence and determination of extreme functions, improvements by adding remainder terms, etc.

Next, we particularly refer to potentials involving the distance function to the boundary $\delta_\Omega: \overline{\Omega}\rightarrow \R_{+}$  
defined  by  
 $   \delta_\Om(x) : = \text{dist}(x,\prt\Om) := \inf_{y\in\prt\Om} \abs{x-y} \notag$
and, when there is no confusion, we will write $\delta$ instead of $\delta_\Om$. 

The most studied cases in the literature so far concern with potentials involving one singular potential or distance function to the boundary (such as $V(x)\sim \frac{1}{|x|^p},\frac{1}{\delta^p(x)}$,  $W(x)\sim \frac{1}{|x|^{2p}}, \frac{1}{\delta(x)^{2p}}$, $Z(x)\sim \frac{1}{|x|^{p}}$). The special cases of \eqref{HI}-\eqref{HR} for $p=2$ have been intensively studied due to their Hilbertian structure. For all these cases, the sharp constants are known in many situations as follows. 
 \begin{enumerate}[I.]
 \item The case $V(x)=\frac{1}{|x|^p}, 1\leq p<N$: 
 \begin{itemize}
     \item If $\Om=\R^N$ or $0\in\Om$ then $\lambda^\sharp(p, \Omega, V)=\left(\frac{N-p}{p}\right)^p$; \;\;(cf. \cite{hardy}; for a simple proof, see e.g. \cite{azorero-alonso})
     \item If $\Om\subseteq\R^N_+$ and $0\in \prt\Om$ then $\lambda^\sharp(2,\Om,V)=\frac{N^2}{4}$.  \;\; (cf. \cite{cazacu-boundary}; for further extensions for any $p>1$ see, e.g., \cite{mmfall}) 
 \end{itemize}
 \item The case $V(x)=\frac{1}{\delta^p(x)}$, $1<p<\infty$: 
 \begin{itemize}
     \item For all $\Omega$ bounded it holds $\lambda^\sharp(p, \Omega, V)\leq \left(\frac{p-1}{p}\right)^p$; \;\; (cf. \cite{pinchover})
     \item If $\Omega$ is convex then  $\lambda^\sharp(p, \Omega, V)=\left(\frac{p-1}{p}\right)^p$; \;\; (cf. \cite{sobolevskii} for $N=2$, \cite{pinchover})
     \item There are non-convex domains satisfying $ \lambda^\sharp(p, \Omega, V)=\left(\frac{p-1}{p}\right)^p$; \;\;(e.g. weakly mean convex domains, cf. \cite{lewis})
     \item There exist non-convex domains $\Omega$ for which $\lambda^\sharp(p, \Omega, V)< \left(\frac{p-1}{p}\right)^p$; \;\; (cf. \cite[Ex. 1]{pinchover} - for $\Om=\R^N\setminus\{0\}$ it holds $\lambda^\sharp(p, \Omega, V)=\left|\frac{N-p}{p}\right|^p$ ) 
     \item If $\Om$ is an arbitrary planar, simply connected domain, then $\lambda^\sharp(2,\Om,V) \geq \frac{1}{16}$. \; (cf. \cite{ancona})
 \end{itemize}
 \item The case $W(x)=\frac{1}{|x|^{2p}}$ and $0\in\Om$,
 \begin{itemize}
     \item If $p=2$ and $N\geq 5$, then $\lambda^\sharp(2,\Om,W)=\left(\frac{N(N-4)}{4}\right)^2$; \; (known as Rellich inequality, cf. \cite{rellich}); it also holds when $N\in \{2, 3\}$ for functions $u\in C_c^\infty(\Omega\setminus\{0\})$ 
     \item If $1<p<\frac{N}{2}$, then $\lambda^\sharp(p,\Om,W)=\left(\frac{N(N-2p)(p-1)}{p^2}\right)^p$; \;\big(cf. \cite{davies-hinz} - for $u\in C^\infty_c(\Om\setminus\{0\})$\big) 
     \item If $1<p<\frac{N}{2}$ and $\Om=\R^N$, then $\lambda^\sharp(p,\R^N,W)=\left(\frac{N(N-2p)(p-1)}{p^2}\right)^p$; (cf. \cite{balinsky1}, Cor. 6.3.5)
     \item It seems that the case $0\in\prt\Om$ has not been studied.
 \end{itemize}
 \item The case $W(x)=\frac{1}{\delta(x)^{2p}}$, $1<p<\infty$:
     \begin{itemize}
         \item If $p=2$ and $\Om$ is convex then $\lambda^\sharp(2,\Om,W)=\frac{9}{16}$; \; (cf. \cite{owen})
         \item If $p>1$ then $\lambda^\sharp(p,\Om,W)\leq \left(\frac{(2p-1)(p-1)}{p^2}\right)^p$; \; (cf. \cite{barbatis3}, \cite{barbatis}) -the exact value of $\lmb^\sharp$ in this case seems to be known only for $p=2$;
         \item If $p>1$, $p\neq 2$ and $\Om$ is convex then $\lambda^\sharp(p,\Om,W) \leq B_{N,2p} N^{-d}\cot^{-2p}\left(\frac{\pi}{2p^\m}\right)\left(\frac{(2p-1)(p-1)}{p^2}\right)^p$, \;(cf. \cite{edmunds})\\
         where $d=\left\{\begin{array}{cc} 
    2,  &   1<p<2 \\ 
    \frac{2p}{p'},  &  2<p<\infty  
\end{array} \right.$, $p^\m=\max\{p,\frac{p}{p-1}\}$, $B_{N,2p}=\frac{\Gamma\left(\frac{1+2p}{2}\right)\Gamma\left(\frac{N}{2}\right)}{\sqrt{\pi}\;\Gamma\left(\frac{N+2p}{2}\right)}$, $\Gamma$ being the standard Euler $\Gamma$ function.
     \end{itemize}
\item The case $Z(x)=\frac{1}{|x|^p}$:
    \begin{itemize}
        \item If $p=2$ and $\Om=\R^N$ then $\lambda^\sharp(2,\R^N,Z) = \left\{\begin{array}{cc} 
    \frac{N^2}{4},  &  N\geq 5, \\ 
    3,  &  N=4,  \\ 
    \frac{25}{36},  &  N=3,  \\
    0,  &  N=2,\\
    \frac{1}{4},  &  N=1;   
\end{array} \right.$   \big(cf. \cite{cazacuHR} - for $N\geq 3$; \cite{cassano} - for $N=1,2$ and $u\in C^\infty_c(\R^N\setminus\{0\})$\big)
        \item Any $p>1$: seems to be unsolved.
    \end{itemize}
\item The case $Z(x)=\frac{1}{\delta^p(x)}$, $1<p<\infty$:
    \begin{itemize}
        \item If $p=2$, $\Om$ convex and $\delta(x)$ bounded in $\Om$ then $\lambda^\sharp(2,\Om,Z)=\frac{1}{4}$; \; (cf. \cite{barbatis})
        \item Any $p>1$: seems to be unsolved.
    \end{itemize}
 \end{enumerate}

In this paper, we focus on the unsolved problems of case VI above. That is, we study second order $L^p$ functional inequalities with singular potentials depending on the distance function to the boundary, in the spirit of \eqref{HR} with the toy-model which corresponds to $Z(x)=\frac{1}{\delta^p(x)}$, with or without reminder terms.  
\paragraph{Well-known improvements with reminder terms.} Hardy inequalities such as \eqref{HI} with $V(x)\sim \frac{1}{\delta^p(x)}$ and reminder terms have been intensively studied so far.  In the case $p=2$, in the pioneering  paper \cite{brezis} the authors  proved that there exists a real constant $\mu=\mu(\Omega)>-\infty$ such that
\begin{equation}\label{ec:Hardy_reminder}
    \int_\Om \abs{\nabla u}^2 dx \geq \frac{1}{4} \int_\Om \frac{\abs{u}^2}{\delta^2(x)} dx + \mu \int_\Om \abs{u}^2 dx, \;\;\;\forall\; u\in C^\infty_c(\Om),     
\end{equation}
where the constant $\frac{1}{4}$ is sharp in any domain modulo the $L^2$ term $\mu \|u\|_{L^2(\Omega)}^2$ ($\mu$ could be also negative). However, they showed that for convex domains the remainder term in \eqref{ec:Hardy_reminder} is positive and more precisely \eqref{ec:Hardy_reminder} is valid with $\mu(\Om)= \frac{1}{4 \textrm{diam}^2(\Om)}$, where $\textrm{diam}(\Omega)=\sup_{x, y\in \Omega}|x-y|$ is the diameter of $\Omega$. This constant was later improved in \cite{hoffman} to $\mu(\Om) = \frac{c(N)}{|\Om|^{2/N}}$, where $c(N)=\frac{N^{(N-2)/N}|\mathbb{S}^{N-1}|^{2/N}}{4}$ ($\mathbb{S}^{N-1}$ denotes the unit sphere in $\mathbf{\mathbb{R}^N}$ and $|\mathbb{S}^{N-1}|$ states for the $(N-1)$ dimensional Hausdorff measure in $\mathbb{R}^N$). An extension of \eqref{ec:Hardy_reminder} to the $L^p$ setting was proved in \cite{tidblom} for convex domains:
\begin{equation}\label{ec:Hardy_reminder_Lp}
    \int_\Om \abs{\nabla u}^p dx \geq \left(\frac{p-1}{p}\right)^p \int_\Om \frac{\abs{u}^p}{\delta^p(x)} dx + \frac{c(N,p)}{|\Om|^{p/N}} \int_\Om \abs{u}^p dx, \;\;\;\forall\; u\in C^\infty_c(\Om),     
\end{equation}
where $c(N,p) = \frac{(p-1)^{p+1}}{p^p}\left( \frac{|\mathbb{S}^{N-1}|}{N}\right)^\frac{p}{N} \frac{\sqrt{\pi}\Gamma\left(\frac{N+p}{2}\right)}{\Gamma\left(\frac{p+1}{2}\right)\Gamma\left(\frac{N}{2}\right)}$. They also prove a counterpart of \eqref{ec:Hardy_reminder_Lp} for $\Om$ not necessarily convex by replacing the potential $1/\delta^p(x)$ with a weaker singular potential on the boundary depending on the weaker distance function $\rho_\nu(x)=\min\{s>0 \ |\ x+ s\nu\not\in \Omega\}$, $\nu \in S^{N-1}$. 

Another geometric inequality in $L^p$ with remainder term was proved in \cite{balinsky2} (or \cite{balinsky1} for a more detailed version), where it is proved (among others) that
 \begin{equation}   \label{ec:Hardy-Dist-rmnd}
        \int_\Om \abs{\nabla u}^p dx \geq \Big(\frac{p-1}{p} \Big)^p \int_\Om \frac{\abs{u}
        ^p}{\delta^p(x)} dx + \Big(\frac{p-1}{p} \Big)^{p-1} \int_\Om \frac{\abs{u}
        ^p}{\delta^{p-1}(x)} (-\Delta\delta) dx,  \;\;\; \forall u\in C^\infty_c(\Om\setminus\ridge),
\end{equation}
where $\ridge\subset \Omega$ is the ridge set of $\Om$ (see Section 3). This restriction is imposed since for domains with sufficiently smooth boundaries, say $\prt\Om \in C^2$, the function $\delta$ is not necessarily smooth on $\ridge$ where $\Delta \delta$ does not exist in the strong sense. When comparing \eqref{ec:Hardy_reminder_Lp} and \eqref{ec:Hardy-Dist-rmnd} we notice that the remainder term in the second inequality also contains a singularity, but is altogether less singular than the first term. It is also interesting that the positivity of the remainder in \eqref{ec:Hardy-Dist-rmnd} depends on a geometric condition on the domain $\Om$, more precisely $-\Delta\delta \geq 0$, which is true in convex domains, but not exclusively, for instance, the case of a torus (cf. \cite{balinsky2}). The authors in \cite{lewis} prove that the condition $-\Delta\delta\geq 0$ is equivalent to the weakly mean convexity of the domain $\Om$; these are defined as $C^2-$domains with non-negative mean curvature in any point on $\prt\Om$. Such domains will also be of great importance to us, so we shall explore their properties further in Section 2, Remark \ref{ec:remark-convexity}. 

To our knowledge, even less is known about the inequality \eqref{R} with potentials involving distance to the boundary compared to the case of \eqref{HI} (we refer to cases II and IV); the same for the counterpart with reminder terms. A notable result for $p=2$ and any convex domain $\Om\subsetneq \R^N$ with finite inradius $\delta_0:=\sup_\Om \delta$ states that (cf. \cite[Prop. 6.2.9]{balinsky1})
\begin{equation}\label{HRR}
    \int_\Omega \abs{\Delta u}^2 dx \geq \frac{9}{16} \int_\Om \frac{\abs{u}^2}{\delta^4(x)} dx +\frac{\lmb_0^2}{4\delta_0^2}\int_\Om \frac{\abs{u}^2}{\delta^2(x)} dx + \frac{\lmb_0^4}{\delta_0^4}\int_\Om \abs{u}^2 dx,
\end{equation}
where $\lmb_0$ is the first zero in $(0,\infty)$ of $J_0(x)-2x J_1(x)$ and $J_0$ and $J_1$ are Bessel functions (e.g., a comprehensive study of the Bessel functions can be found in \cite{bowman}). \\
For the inequality \eqref{HR} with $Z(x)=\frac{1}{\delta^p(x)}$, we refer again to the reference \cite{barbatis}, where positive logarithmic-type singular remainder terms are obtained. As a consequence, when $\Om$ is a convex bounded domain, then the next optimal inequality holds:
\begin{equation}  \label{ec:hardy-rellich-dist}
    \int_{\Om} \abs{\Delta u}^2 dx \geq \frac{1}{4} \int_{\Om} \frac{\abs{\nabla u}^2}{\delta^2(x)} dx, \;\;\;\forall\; u\in C^\infty_c(\Om).  
\end{equation}
This can be seen as a Hardy-type inequality with distance function to the boundary applied for vector fields $\vec{U}$ which are potential fields, i.e. $\vec{U}=\nabla u$, where $u:\Omega\rightarrow \R$ is a scalar field.\\
It is interesting to remark that when we pass from the Hardy inequality \eqref{HI} to the Hardy-Rellich inequality \eqref{HR} with singular potential given by the distance to the origin (i.e., cases I and V for $p=2$), we notice an improvement of the constant. These facts are well-covered and further explored in the works \cite{cazacuHR}, \cite{cazacu}, \cite{tertikas}. This phenomenon doesn't seem to appear when we consider inequalities involving the distance to the boundary function, at least not in the case $p=2$, where we can see above in \eqref{ec:hardy-rellich-dist} that the optimal constant remains the same as in the Hardy inequality (case II, for $p=2$). Extended literature on the subject can be found in \cite{avkhadiev1}, \cite{kombe}, \cite{lam1}, \cite{lam2}. We were unable to find more improvements on inequalities of \eqref{ec:hardy-rellich-dist}-type with remainder terms for $p=2$ or $p\neq 2$, which encouraged us to look more into the subject. \\
Our work aims to find an $L^p-$version of \eqref{ec:hardy-rellich-dist}, the sharp constant in particular domains, such as convex domains, and develop improved inequalities in the spirit of \eqref{ec:Hardy-Dist-rmnd}. We find interesting geometric restrictions on the domain in which our inequalities are sharp, depending on the curvature of $\prt\Om$. To the best of our knowledge, based on our brief analysis of the existing literature - outlined in cases I-VI above and further extended with \eqref{ec:Hardy_reminder}-\eqref{ec:hardy-rellich-dist} - our results seem to be new, even in the case where $p=2$.\\

\section{Main results}  
In the following, we denote by $\hess u$ the Hessian matrix of $u$ with its norm given by 
\begin{equation}   \label{ec:hessiannorm}
    \abs{\hess u}=\Big( \sum_{i,j=1}^N u_{x_ix_j}^2 \Big)^\frac{1}{2}.
\end{equation}
In the sequel we define by $\lambda^\sharp_H\left(p,\Om,\frac{1}{\delta^p}\right)$ the sharp constant in the following $L^p$-second order Hardy-Rellich inequality 
\begin{equation}   \label{ec:hessianineq_pure}
        \int_\Om \abs{\hess u}^p dx \geq \lambda^\sharp_H\left(p,\Om,\frac{1}{\delta^p}\right) \int_\Om \frac{\abs{\nabla u}
        ^p}{\delta^p(x)} dx, \;\;\; \forall u\in C^\infty_c(\Om).
    \end{equation}
\begin{theorem}   \label{thm1}
    Let $N\geq 2$, $1<p<\infty$ and $\Om\subset\R^N$ open domain with $C^2$ boundary $\prt\Om$. Then for any $ u\in C^\infty_c(\Om\setminus\ridge)$ it holds:
    \begin{equation}   \label{ec:hessianineq}
        \int_\Om \abs{\hess u}^p dx \geq \Big(\frac{p-1}{p} \Big)^p \int_\Om \frac{\abs{\nabla u}
        ^p}{\delta^p(x)} dx + \Big(\frac{p-1}{p} \Big)^{p-1} \int_\Om \frac{\abs{\nabla u}
        ^p}{\delta^{p-1}(x)} (-\Delta\delta) dx.
    \end{equation}
   \begin{itemize}
\item[i)]  Particularly, if $\Om$ is bounded and $-\Delta\delta \geq 0$ in $\Om\setminus\ridge$, then
    \begin{equation}   \label{ec:hessianineq2}
        \int_\Om \abs{\hess u}^p dx \geq \Big(\frac{p-1}{p} \Big)^p \int_\Om \frac{\abs{\nabla u}
        ^p}{\delta^p(x)} dx,  
    \end{equation}
    and the constant $\lambda_p:=\Big(\frac{p-1}{p} \Big)^p$ is sharp in \eqref{ec:hessianineq2}, i.e. 
    $$\lambda^\sharp_H\left(p,\Om,\frac{1}{\delta^p}\right)=\lambda_p.$$\\
\item[ii)]  Otherwise,  it holds 
    \begin{equation}   \label{ec:HRsharp}
       \max\left\{\lambda_H^\sharp\left(p,\Om,\frac{1}{\delta^p}\right), \lambda^\sharp \left(p,\Om,Z=\frac{1}{\delta^p}\right)\right\}  \leq \lambda_p.
    \end{equation} 
   \end{itemize}
\end{theorem}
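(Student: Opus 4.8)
The plan is to prove the basic inequality \eqref{ec:hessianineq} by reducing it to the known Hardy-type inequality \eqref{ec:Hardy-Dist-rmnd} applied component-wise to the gradient field. The starting point is the pointwise bound $|\hess u|^p \ge |\nabla(\partial_i u)|^p$ summed appropriately, but a cleaner route is: for any fixed direction, $|\hess u|^2 = \sum_{i,j} u_{x_ix_j}^2 \ge |\nabla |\nabla u||^2$ by the Cauchy--Schwarz/Kato-type inequality $|\nabla|\nabla u|| \le |\hess u|$, valid a.e.\ where $\nabla u \ne 0$. Hence $\int_\Om |\hess u|^p\,dx \ge \int_\Om |\nabla|\nabla u||^p\,dx$. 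Now apply the scalar inequality \eqref{ec:Hardy-Dist-rmnd} to the function $v := |\nabla u|$: since $u\in C_c^\infty(\Om\setminus\ridge)$, on the open set where $\nabla u\ne 0$ the function $v$ is smooth, and one checks it lies (after the usual approximation/cutoff near the zero set of $\nabla u$, which has the right structure) in the class for which \eqref{ec:Hardy-Dist-rmnd} holds; this gives exactly the right-hand side of \eqref{ec:hessianineq}. I would carry out the approximation argument carefully near $\{\nabla u = 0\}$, truncating $v$ by $\max\{v,\eps\}$ or inserting a cutoff, and pass to the limit using dominated convergence — this is the first place where some care is required, though it is standard.

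For part i), assume $\Om$ bounded and $-\Delta\delta\ge 0$ on $\Om\setminus\ridge$. Then the second term on the right of \eqref{ec:hessianineq} is nonnegative (the integrand $\frac{|\nabla u|^p}{\delta^{p-1}}(-\Delta\delta)\ge 0$), so \eqref{ec:hessianineq2} follows immediately, and we also get $\lambda_H^\sharp(p,\Om,\frac1{\delta^p}) \ge \lambda_p$. For the reverse, i.e.\ sharpness, the plan is to construct an explicit minimizing sequence concentrating near a boundary point. Fix $x_0\in\prt\Om$, flatten the boundary locally so that $\delta(x)\approx x_N$ near $x_0$, and take test functions of the form $u_\eps(x) = \phi(x)\,\psi_\eps(x_N)$ where $\phi$ is a fixed cutoff supported near $x_0$ and $\psi_\eps(t) = t^{(p-1)/p + \eps}$ suitably truncated away from $t=0$ and localized in $t$; these are the natural optimizers for the one-dimensional Hardy inequality with exponent $(p-1)/p$. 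One then computes that both $\int |\hess u_\eps|^p$ and $\int \frac{|\nabla u_\eps|^p}{\delta^p}$ blow up like $1/\eps$ as $\eps\to 0$, with the leading-order ratio tending to $\lambda_p = \left(\frac{p-1}{p}\right)^p$. The main obstacle here is controlling the error terms: the Hessian of $u_\eps$ has many entries (tangential derivatives of $\phi$, mixed derivatives, the curvature corrections from flattening the boundary, and $\psi_\eps''$), and one must verify that only the $\partial_{x_N}^2$ contribution $\psi_\eps''$ — which is of order $t^{(p-1)/p + \eps - 2}$ — is singular enough to contribute to the leading $1/\eps$ term, while all others are lower order. Equivalently, $|\hess u_\eps|^p$ to leading order equals $|u_{\eps,x_Nx_N}|^p$, and $|\nabla u_\eps|^p/\delta^p$ to leading order equals $|u_{\eps,x_N}|^p/x_N^p$; the ratio of these leading terms is precisely the sharp constant in the one-dimensional Hardy inequality $\int_0^\infty |\psi''|^p \ge \lambda_p \int_0^\infty |\psi'|^p/t^p$. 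I would organize this computation so that the error estimates are isolated in a lemma.

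For part ii), when the geometric condition fails, we no longer control the sign of the remainder in \eqref{ec:hessianineq}, so the argument is different and relies only on comparing with the scalar Hardy inequality. The key observation is the pointwise/integrated chain: by Kato's inequality as above, $\int_\Om |\hess u|^p \ge \int_\Om |\nabla|\nabla u||^p$, and then by the scalar Hardy inequality for the distance function (the general lower bound, valid in every bounded domain, $\int_\Om |\nabla v|^p \ge \lambda^\sharp(p,\Om,\frac1{\delta^p})\int_\Om \frac{|v|^p}{\delta^p}$) applied to $v = |\nabla u|$, we get $\int_\Om |\hess u|^p \ge \lambda^\sharp(p,\Om,\frac1{\delta^p}) \int_\Om \frac{|\nabla u|^p}{\delta^p}$, whence $\lambda_H^\sharp \ge \lambda^\sharp(p,\Om,\frac1{\delta^p})$. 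On the other hand — and this is the content of the claimed upper bound $\lambda_p$ on both quantities — one uses the universal bound $\lambda^\sharp(p,\Om,\frac1{\delta^p}) \le \left(\frac{p-1}{p}\right)^p = \lambda_p$, which holds for all bounded domains (cf.\ \cite{pinchover}, case II above), together with the fact that $\lambda_H^\sharp \le \lambda_p$ follows from the same boundary-concentration test function construction as in part i): near a boundary point the geometry is asymptotically that of a half-space, so the one-dimensional computation still forces the ratio down to $\lambda_p$ regardless of the global sign of $-\Delta\delta$. Combining, $\max\{\lambda_H^\sharp,\lambda^\sharp\} \le \lambda_p$, which is \eqref{ec:HRsharp}. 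The only delicate point is making precise that the local boundary-flattening test functions still work without the global convexity-type assumption; since they are supported in a small neighborhood of $x_0\in\prt\Om$ where $\prt\Om\in C^2$, the curvature terms are bounded and contribute only lower-order errors, so the construction goes through verbatim.
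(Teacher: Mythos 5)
Your proof of the basic inequality \eqref{ec:hessianineq} is correct and is, at bottom, the same mechanism as the paper's but packaged differently. The paper integrates $\int|\nabla u|^p\delta^{-p}$ by parts directly, using $\nabla(|\nabla u|^p)=p|\nabla u|^{p-2}\hess u\,\nabla u$ together with $|\nabla\delta\cdot\hess u\,\nabla u|\le|\hess u||\nabla u|$ and Young's inequality; in effect it re-runs the proof of \eqref{ec:Hardy-Dist-rmnd} for the scalar $v=|\nabla u|$ in-line, which avoids any regularity worry because $|\nabla u|^p$ is $C^1$ for $p>1$ when $u$ is smooth. Your route via the Kato bound $|\nabla|\nabla u||\le|\hess u|$ a.e.\ and then invoking \eqref{ec:Hardy-Dist-rmnd} for $v=|\nabla u|$ is a clean modularization and does work, at the cost of the approximation argument near $\{\nabla u=0\}$ that you flag (one needs \eqref{ec:Hardy-Dist-rmnd} to hold for Lipschitz $v$ with compact support in $\Om\setminus\ridge$, which is fine by mollification since $\delta$ and $\Delta\delta$ are controlled on the support). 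So this part is a genuine, slightly more technical, alternative that trades an elementary in-line computation for a known black box.

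There are two real problems in parts i) and ii). First, your sharpness exponent is off: you take $\psi_\eps(t)=t^{(p-1)/p+\eps}$, which is the extremal profile for the \emph{first}-order Hardy inequality for $\psi$ itself. Here the relevant one-dimensional quantity is $\int|\psi''|^p\gtrsim\lambda_p\int|\psi'|^p/t^p$, so it is $\psi'$ that should look like $t^{(p-1)/p+\eps}$, i.e.\ $\psi_\eps(t)=t^{(2p-1)/p+\eps}$. With your exponent, $|\psi_\eps''|^p\sim t^{-p-1+p\eps}$, which is not integrable at $t=0$ for small $\eps>0$, so the construction does not even produce admissible test functions; with the corrected exponent both $\int|\psi_\eps''|^p$ and $\int|\psi_\eps'|^p/t^p$ are $\sim t^{-1+p\eps}$ near $0$ and diverge like $1/(p\eps)$, which is exactly the behaviour the paper exploits with its global $u_\eps=\delta^{(2p-1)/p+\eps}\theta_\eps$. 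Your local-flattening variant is otherwise reasonable, and the paper's global variant achieves the same end without the chart bookkeeping.

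Second, part ii) as you wrote it does not establish $\lambda^\sharp\bigl(p,\Om,Z=\tfrac{1}{\delta^p}\bigr)\le\lambda_p$. You cite the universal bound $\lambda^\sharp\le\lambda_p$ from case II, but that bound is for the \emph{first}-order Hardy constant $\lambda^\sharp(p,\Om,V)$ in $\int|\nabla v|^p\ge\lambda\int|v|^p/\delta^p$, not for the Hardy--Rellich constant $\lambda^\sharp(p,\Om,Z)$ in $\int|\Delta u|^p\ge\lambda\int|\nabla u|^p/\delta^p$; these are different objects and the first does not control the second. (Your chain $\lambda_H^\sharp\ge\lambda^\sharp(p,\Om,V)$ is true but gives a lower bound, not the needed upper bound.) What is actually required is to test the Hardy--Rellich quotient with the same $\delta$-dependent concentrating sequence: for $u_\eps=g(\delta)$ one has $\Delta u_\eps=g''(\delta)+\Delta\delta\,g'(\delta)$ by \eqref{ec:s-laplacian}, the $g''$ term dominates, and the same one-dimensional computation forces the quotient to $\lambda_p$. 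This is precisely the content of the remark in the paper about restricting to the Laplacian instead of the Hessian. You should make this explicit rather than relying on the first-order Hardy bound.
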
 
The inequality known in the literature as the Calderón–Zygmund inequality (see e.g. \cite{gilbardtrud}, \cite{pigola}, \cite{metafune}) states that
    if $\Om$ is an open bounded domain in $\R^N$ and $1<p<\infty$, then for any $u\in C_c^\infty(\Om)$ there exists a positive constant $C=C(N,p)$ such that
    \begin{equation}
        \int_\Om \abs{\Delta u}^p dx \geq C \int_\Om \abs{\hess u}^p dx.   \label{eq:CZineq}
    \end{equation}
We denote the best possible constant in \eqref{eq:CZineq} by $C_{CZ,p}$. To our knowledge  the explicit value of $C_{CZ,p}$ is not known, except for the case $p=2$ when $C_{CZ,2}=1$ due to the fact that the two integrals coincide in view of the computation below, based on two integrations by parts:
\begin{align}
    \int_\Om \abs{\Delta u}^2 dx & =  \sum_{i=1}^N\int_\Om u^2_{x_ix_i} dx + 2\sum_{1\leq i<j\leq N} \int_\Om u_{x_ix_i}u_{x_jx_j} dx    \notag\\
    & = \sum_{i=1}^N \int_\Om u^2_{x_ix_i} dx + 2\sum_{1\leq i<j\leq N} \int_\Om u^2_{x_ix_j} dx    \notag\\
    & = \sum_{i=1,j}^N \int_\Om u^2_{x_ix_j} dx = \int_\Om \abs{\hess u}^2 dx.   \label{eq:calculation1}
\end{align}
When $1<p<\infty$, the following estimate from \cite{edmunds} holds for any $u\in C^2_c(\R^N)$ and any $i,j=1,...,N$:
\begin{equation}   \label{xixj-bound}
    \|u_{x_ix_j}\|_{L^p} \leq \cot^2 \left( \frac{\pi}{2p^{\m}} \right) \|\Delta u\|_{L^p},  \;\;\;\text{where} \;\;\; p^{\m}:=\max\{p,p'\}, \;\;\; \frac{1}{p}+\frac{1}{p'}=1.
\end{equation} 
The approach of proving \eqref{xixj-bound} in \cite{edmunds} is based on the explicit computation of the norm of the Riesz transform as a map from $L^p(\R^N)$ to itself. This was done first in \cite{pichorides} on $L^p(\R)$ and extended later in \cite{banuelos} and \cite{iwaniec}. As a consequence of \eqref{xixj-bound}, we can easily deduce the next proposition (see Appendix for a proof). 
\begin{prop}   \label{prop1}
    Let $N\geq 2$, $1<p<\infty$, $p\neq 2$ and $u\in C^2_c(\R^N)$. Then 
    \begin{equation}    \label{hess-lap}
        \int_{\R^N} \abs{\Delta u}^p dx \geq C_{p,N} \int_{\R^N} \abs{\hess u}^p dx,
    \end{equation}
    where $C_{p,N}= N^{-p}  \cot^{-2p} \left( \frac{\pi}{2p^{\m}} \right)$.
\end{prop}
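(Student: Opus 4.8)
The only input is the componentwise bound \eqref{xixj-bound}, namely $\|u_{x_ix_j}\|_{L^p(\R^N)}\le\cot^2\!\big(\tfrac{\pi}{2p^{\m}}\big)\|\Delta u\|_{L^p(\R^N)}$ for every $i,j$ and every $1<p<\infty$; everything else is the passage from these $N^2$ scalar estimates to the Frobenius norm $|\hess u|=\big(\sum_{i,j}u_{x_ix_j}^2\big)^{1/2}$. I would compare, pointwise in $x$, the $\ell^2$-norm of the vector $\big(u_{x_ix_j}(x)\big)_{i,j}\in\R^{N^2}$ with $\big(\sum_{i,j}|u_{x_ix_j}(x)|^p\big)^{1/p}$ by an elementary (dimension-dependent) inequality, integrate over $\R^N$, and then apply \eqref{xixj-bound} in each of the $N^2$ summands. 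Since the $\ell^2$--$\ell^p$ comparison for a fixed vector changes direction at exponent $2$, the natural split is $p>2$ versus $1<p<2$; the value $p=2$ is excluded because there \eqref{xixj-bound} degenerates to $\|u_{x_ix_j}\|_{L^2}\le\|\Delta u\|_{L^2}$ (as $\cot(\pi/4)=1$) and the sharp Calderón–Zygmund constant $C_{CZ,2}=1$ is already handed to us by the two integrations by parts in \eqref{eq:calculation1}.

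For $p>2$ the map $t\mapsto t^{p/2}$ is convex, so the power--mean inequality applied to the $N^2$ numbers $u_{x_ix_j}^2$ gives $\big(\sum_{i,j}u_{x_ix_j}^2\big)^{p/2}\le N^{\,p-2}\sum_{i,j}|u_{x_ix_j}|^p$ pointwise. Integrating and inserting \eqref{xixj-bound} in each summand yields
\[
\int_{\R^N}|\hess u|^p\,dx\;\le\;N^{\,p-2}\cdot N^2\,\cot^{2p}\!\Big(\tfrac{\pi}{2p^{\m}}\Big)\int_{\R^N}|\Delta u|^p\,dx\;=\;N^{\,p}\,\cot^{2p}\!\Big(\tfrac{\pi}{2p^{\m}}\Big)\int_{\R^N}|\Delta u|^p\,dx,
\]
which is precisely \eqref{hess-lap} with $C_{p,N}=N^{-p}\cot^{-2p}\!\big(\tfrac{\pi}{2p^{\m}}\big)$. (Equivalently, Minkowski's integral inequality in $L^{p/2}$ gives $\big\|\,|\hess u|\,\big\|_{L^p}\le\big(\sum_{i,j}\|u_{x_ix_j}\|_{L^p}^2\big)^{1/2}$, and then \eqref{xixj-bound}.)

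The case $1<p<2$ is the main obstacle. Now $t\mapsto t^{p/2}$ is sub-additive, so the $\ell^2$--$\ell^p$ comparison only produces $\big(\sum_{i,j}u_{x_ix_j}^2\big)^{p/2}\le\sum_{i,j}|u_{x_ix_j}|^p$, and feeding \eqref{xixj-bound} entrywise into this costs a factor $N^2$ rather than $N^{\,p}$; so to reach the stated $C_{p,N}$ one must exploit the structure of $\hess u$ and not treat its entries as an unstructured vector. I would argue by duality: $\big\|\,|\hess u|\,\big\|_{L^p}=\sup\big\{\langle\hess u,\Phi\rangle : \Phi=(\Phi_{ij})\text{ symmetric},\ \big\|\,|\Phi|\,\big\|_{L^{p'}}\le 1\big\}$ with $p'=\tfrac{p}{p-1}>2$ and $(p')^{\m}=p^{\m}$; using the representation $u_{x_ix_j}=-R_iR_j(\Delta u)$ with $R_i$ the Riesz transforms and the skew-adjointness $R_i^{\ast}=-R_i$, one transfers the two derivatives onto $\Phi$:
\[
\langle\hess u,\Phi\rangle=-\int_{\R^N}(\Delta u)\Big(\sum_{i,j}R_iR_j\Phi_{ij}\Big)\,dx\;\le\;\|\Delta u\|_{L^p}\,\Big\|\sum_{i,j}R_iR_j\Phi_{ij}\Big\|_{L^{p'}}.
\]
The whole difficulty is thereby concentrated in bounding $\big\|\sum_{i,j}R_iR_j\Phi_{ij}\big\|_{L^{p'}}$ by $\big\|\,|\Phi|\,\big\|_{L^{p'}}$ with the correct power of $N$: this is a second-order, $\ell^2$-valued Riesz estimate whose Fourier multiplier $\xi\mapsto\langle\hat\Phi(\xi)\tfrac{\xi}{|\xi|},\tfrac{\xi}{|\xi|}\rangle$ is dominated pointwise by $|\hat\Phi(\xi)|$, and it is precisely in carrying out this step efficiently — not in applying \eqref{xixj-bound} itself — that the sharp dimensional constant for $1<p<2$ has to be extracted.
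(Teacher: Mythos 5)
Your treatment of $p>2$ is correct and coincides with the paper's argument: the power-mean bound $\big(\sum_{i,j}u_{x_ix_j}^2\big)^{p/2}\le N^{p-2}\sum_{i,j}|u_{x_ix_j}|^p$ (equivalently, Minkowski's inequality in $L^{p/2}$), followed by \eqref{xixj-bound} applied to each of the $N^2$ entries, yields exactly $N^{-p}\cot^{-2p}\!\left(\frac{\pi}{2p^{\m}}\right)$.

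Your diagnosis of the case $1<p<2$ is the substantive point, and it is correct. The elementary route yields only
\begin{equation*}
\int_{\R^N}\abs{\hess u}^p\,dx\;\le\;\sum_{i,j=1}^N\|u_{x_ix_j}\|_{L^p}^p\;\le\;N^{2}\,\cot^{2p}\!\left(\tfrac{\pi}{2p^{\m}}\right)\|\Delta u\|_{L^p}^p,
\end{equation*}
that is $C_{p,N}=N^{-2}\cot^{-2p}\!\left(\frac{\pi}{2p^{\m}}\right)$; since $N^{-2}<N^{-p}$ when $1<p<2$, the constant stated in Proposition~\ref{prop1} is \emph{not} a consequence of this argument. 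In fact the paper's Appendix proof for $1<p<2$ ends with the same chain, $\sum_{i,j=1}^N\|u_{x_ix_j}\|_{L^p}^p\le N^{p}\cot^{2p}\!\left(\tfrac{\pi}{2p'}\right)\|\Delta u\|_{L^p}^p$, where the exponent of $N$ should read $N^2$ (there are $N^2$ summands, each bounded individually by \eqref{xixj-bound}). So you have in effect spotted a slip in the source: what the argument actually establishes, for all $1<p<\infty$ with $p\ne 2$, is $C_{p,N}=N^{-\max\{p,2\}}\cot^{-2p}\!\left(\frac{\pi}{2p^{\m}}\right)$, which is still strictly positive and is all that is needed downstream, where $C_{p,N}$ enters only as a lower bound for $C_{CZ,p}$ in \eqref{constCZ}. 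Your duality sketch for $1<p<2$ is a plausible route towards a better dimensional power, but, as you yourself concede, it leaves the essential $\ell^2$-valued second-order Riesz estimate unproved; as written, it does not close the gap, and absent that estimate the honest conclusion for $1<p<2$ is $N^{-2}$, not $N^{-p}$.
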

This gives rise to the following lower bound of the sharp constant in \eqref{eq:CZineq}, i.e. 
$C_{p,N}\leq C_{CZ,p}$. However, this lower-bound is not sharp, since it is far worse in the case $p=2$ than the actual constant $C_{CZ,2}=1$. As a trivial consequence of Theorem \ref{thm1} and inequality \eqref{hess-lap} we get 
\begin{corollary} \label{thm2}
    Let $N\geq 2$, $1<p<\infty$ and $\Om\subset\R^N$ open domain with $C^2$ boundary $\prt\Om$ and $u\in C^\infty_c(\Om\setminus\ridge)$. If $p=2$, then
    \begin{equation}    \label{ec:laplacianineq-p=2}
        \int_\Om \abs{\Delta u}^2 dx \geq \frac{1}{4}\int_\Om \frac{\abs{\nabla u}
        ^2}{\delta^2(x)} dx + \frac{1}{2} \int_\Om \frac{\abs{\nabla u}
        ^2}{\delta(x)} (-\Delta\delta) dx,
    \end{equation}
    while if $p\in(1,\infty)\setminus\{2\}$, then  
    \begin{equation}   \label{ec:laplacianineq}
        \int_\Om \abs{\Delta u}^p dx \geq 
        C_{CZ,p}\Big(\frac{p-1}{p} \Big)^p \int_\Om \frac{\abs{\nabla u}
        ^p}{\delta^p(x)} dx + C_{CZ,p}\Big(\frac{p-1}{p} \Big)^{p-1} \int_\Om \frac{\abs{\nabla u}
        ^p}{\delta^{p-1}(x)} (-\Delta\delta) dx.
    \end{equation}
    Particularly, if $-\Delta\delta \geq 0$ in $\Om\setminus\ridge$, then
     \begin{equation}   \label{ec:laplacianineq2}
        \int_\Om \abs{\Delta u}^p dx \geq C_{CZ,p}\Big(\frac{p-1}{p} \Big)^p \int_\Om \frac{\abs{\nabla u}
        ^p}{\delta^p(x)} dx.
    \end{equation}
Moreover, in view of \eqref{ec:laplacianineq2}, 
\begin{equation}    \label{constCZ}
    C_{p,N} \;\lambda_p \; \leq C_{CZ,p} \;\lambda_p \; \leq \; \lambda^\sharp \left(p,\Om,Z=\frac{1}{\delta^p}\right) \;\leq\; \lambda_p \;\;\;\;\; \text{and} \;\;\;\;\;  C_{p,N}\;\leq\; C_{CZ,p}\;\leq\; 1.
\end{equation}
\end{corollary}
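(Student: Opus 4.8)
\emph{Proof strategy.} The corollary is advertised as a triviality, and the plan is indeed just to transfer inequality \eqref{ec:hessianineq} of Theorem \ref{thm1} from $\abs{\hess u}$ to $\abs{\Delta u}$ by comparing these two quantities, splitting into the cases $p=2$ and $p\neq 2$ and then reading off the constants. First I would fix $u\in C^\infty_c(\Om\setminus\ridge)$ and extend it by zero to all of $\R^N$; since $\mathrm{supp}\,u$ is a compact subset of $\Om$, it is contained in some bounded smooth domain (a large ball will do) on which both the identity \eqref{eq:calculation1} and the Calderón–Zygmund inequality \eqref{eq:CZineq}, whose best constant is $C_{CZ,p}$, are available, and on which the relevant integrals coincide with those over $\Om$.

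For $p=2$, \eqref{eq:calculation1} gives $\int_\Om\abs{\Delta u}^2\,dx=\int_\Om\abs{\hess u}^2\,dx$, so substituting $p=2$ into \eqref{ec:hessianineq} and using $\big(\tfrac{p-1}{p}\big)^p=\tfrac14$ and $\big(\tfrac{p-1}{p}\big)^{p-1}=\tfrac12$ yields \eqref{ec:laplacianineq-p=2} at once. For $p\in(1,\infty)\setminus\{2\}$, Proposition \ref{prop1} shows $C_{CZ,p}\ge C_{p,N}>0$, so by definition of $C_{CZ,p}$ we have $\int_\Om\abs{\Delta u}^p\,dx\ge C_{CZ,p}\int_\Om\abs{\hess u}^p\,dx$; multiplying \eqref{ec:hessianineq} through by $C_{CZ,p}>0$ and concatenating gives \eqref{ec:laplacianineq}. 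In either case, when $-\Delta\delta\ge 0$ in $\Om\setminus\ridge$ the remainder integral on the right-hand side has a nonnegative integrand and may simply be discarded, leaving \eqref{ec:laplacianineq2} for all $p\in(1,\infty)$.

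It then remains to assemble the chain \eqref{constCZ}, which I would do as follows. By Proposition \ref{prop1} the constant $C_{p,N}$ is admissible in \eqref{eq:CZineq} (and at $p=2$ directly $C_{p,N}=N^{-2}\le 1=C_{CZ,2}$), hence $C_{p,N}\le C_{CZ,p}$, and multiplying by $\lambda_p=\big(\tfrac{p-1}{p}\big)^p>0$ gives the first inequality. Inequality \eqref{ec:laplacianineq2} exhibits $C_{CZ,p}\lambda_p$ as an admissible constant in the Hardy–Rellich inequality \eqref{HR} with $Z=1/\delta^p$, so $C_{CZ,p}\lambda_p\le\lambda^\sharp\big(p,\Om,Z=\tfrac{1}{\delta^p}\big)$; and the last bound $\lambda^\sharp\big(p,\Om,Z=\tfrac{1}{\delta^p}\big)\le\lambda_p$ is quoted from Theorem \ref{thm1} (cf. \eqref{ec:HRsharp}). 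Dividing the resulting relation $C_{CZ,p}\lambda_p\le\lambda_p$ by $\lambda_p>0$ gives $C_{CZ,p}\le 1$, which together with $C_{p,N}\le C_{CZ,p}$ completes the second chain.

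I do not expect any genuine obstacle here. The only points deserving care are the routine justification of the zero-extension together with the choice of an ambient bounded domain so that \eqref{eq:CZineq} and \eqref{eq:calculation1} may legitimately be invoked for functions supported in $\Om\setminus\ridge$ (using that $C_{CZ,p}$ depends only on $N$ and $p$), and being precise about which part of Theorem \ref{thm1} supplies the upper bound $\lambda^\sharp\le\lambda_p$ entering \eqref{constCZ}; everything else is purely formal manipulation of the inequalities already established.
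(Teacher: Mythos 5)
Your proposal is correct and follows essentially the same route the paper intends: the corollary is declared in the paper to be ``a trivial consequence of Theorem \ref{thm1} and inequality \eqref{hess-lap}'', and you execute exactly that reduction --- the identity \eqref{eq:calculation1} for $p=2$, the Calder\'on--Zygmund bound $\int\abs{\Delta u}^p\geq C_{CZ,p}\int\abs{\hess u}^p$ for $p\neq 2$, discarding the curvature remainder when $-\Delta\delta\geq 0$, and reading \eqref{constCZ} off from Proposition \ref{prop1}, the admissibility of \eqref{ec:laplacianineq2} in \eqref{HR}, and the upper bound $\lambda^\sharp\leq\lambda_p$. One small remark on the provenance of that last upper bound: you invoke \eqref{ec:HRsharp} from part ii) of Theorem \ref{thm1}, which is nominally scoped as the ``otherwise'' case, whereas \eqref{constCZ} is asserted ``in view of \eqref{ec:laplacianineq2}'', i.e.\ under $-\Delta\delta\geq 0$; the paper's own justification (the remark just after the proof of Theorem \ref{thm1}) is that the same minimizing sequence $u_\eps$, fed into the Laplacian quotient rather than the Hessian quotient, yields $\lambda^\sharp\leq\lambda_p$ directly --- this works because the leading terms of $\abs{\Delta u_\eps}$ and $\abs{\hess u_\eps}$ coincide, as $\sum_i\delta_{x_i}^2=1$. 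It would be cleaner to cite that remark (or note that \eqref{ec:HRsharp} in fact holds in all cases) rather than part ii) verbatim, but the substance of your argument is the same. Also note that no extension to $\R^N$ is really needed: since $u\in C_c^\infty(\Om\setminus\ridge)\subset C_c^\infty(\Om)$ has compact support, both \eqref{eq:calculation1} and \eqref{eq:CZineq} already apply directly on $\Om$ (taking $\Om$ bounded, which is what makes $\lambda^\sharp$ finite and the minimizing-sequence upper bound available in the first place).
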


\begin{remark}  \label{ec:remark-convexity}
    We note that the positivity of the last term of \eqref{ec:hessianineq} and \eqref{ec:laplacianineq} depends on the sign of $\Delta\delta$ which is related to the geometry of $\prt\Om$. From \cite{balinsky2} we know that $\Delta \delta$ depends on the principal curvatures of $\prt\Om$. It is known that for $\Om$ a convex domain it holds
    \begin{equation}   \label{ec:delta-1}
        -\Delta\delta(x) \geq 0, \;\;\forall x\in\Om\setminus\ridge
    \end{equation}
    and 
    \begin{equation}   \label{ec:delta-2}
        -\Delta\delta(x) \leq 0, \;\;\forall x\in\overline{\Om}^c.
    \end{equation}
\end{remark}
Note that there exist non-convex domains which satisfy conditions \eqref{ec:delta-1} (e.g., a torus with outer radius at least twice the inner radius). As we mentioned in the introduction, in \cite{lewis} is proved that condition \eqref{ec:delta-1} is equivalent to the weakly mean convexity of the domain. To be more precise, by definition, a domain with  $C^2$ boundary in $\R^N$ is \textit{weakly mean convex} if $H(y)\geq 0$ for any $y\in \prt\Om$, where $H(y)$ is the mean curvature at $y\in \prt\Om$. Recall that $H(y)=\frac{1}{N-1}\sum_{i=1}^{N-1}\kappa_i(y)$, where $\kappa_i(y)$'s are the principal curvatures of $\prt\Om$ at $y$. (see Section 3 below) Apart from the torus and different small perturbations of it or a torus with high genus, we note that an interesting example of weakly mean convex domains are domains with embedded minimal surfaces as boundaries, which we know have $H\equiv 0$ everywhere. These also qualify as examples for domains which satisfy \eqref{ec:delta-2}.\\
In the same paper \cite{lewis} it is proved the following statement: if $\Om\subset \R^N$ is a weakly mean convex domain and $H_0:=\inf_{y\in\prt\Om} H(y)$, then for any $1<p<\infty$ and $x\in\Om\setminus\ridge$ we have
\begin{equation}   \label{ec:mean-curv-bound}
    -\Delta\delta(x) \;\geq\; \frac{p H^p(y)}{N^{p-1}}\left(\frac{p}{p-1}\right)^{p-1} \delta^{p-1}(x) \;\geq\; \frac{p H_0^p}{N^{p-1}}\left(\frac{p}{p-1}\right)^{p-1} \delta^{p-1}(x),
\end{equation}
where $y\in\prt\Om$ is the near point of $x$ (see Section 3). As a direct consequence of this fact and inequalities \eqref{ec:hessianineq}, \eqref{ec:laplacianineq-p=2} and \eqref{ec:laplacianineq}, we have the next corollary.
\begin{corollary}   \label{}
    Let $N\geq 2$, $1<p<\infty$ and $\Om\subset\R^N$ weakly mean convex bounded domain with $\prt\Om$ belonging to class $C^2$. Then for any $ u\in C^\infty_c(\Om\setminus\ridge)$:
    \begin{equation}   \label{ec:hessianineq-mean}
        \int_\Om \abs{\hess u}^p dx \geq \Big(\frac{p-1}{p} \Big)^p \int_\Om \frac{\abs{\nabla u}
        ^p}{\delta^p(x)} dx +  \frac{p H_0^p}{N^{p-1}} \int_\Om \abs{\nabla u}
        ^p dx.
    \end{equation}
    If $p=2$
    \begin{equation}    \label{ec:laplacianineq-p=2-mean}
        \int_\Om \abs{\Delta u}^2 dx \geq \frac{1}{4}\int_\Om \frac{\abs{\nabla u}
        ^2}{\delta^2(x)} dx + \frac{2H_0^2}{N} \int_\Om \abs{\nabla u}
        ^2 dx,
    \end{equation}
    while for $p\in(1,\infty)\setminus\{2\}$ 
    \begin{equation}   \label{ec:laplacianineq-mean}
        \int_\Om \abs{\Delta u}^p dx \geq 
        C_{CZ,p}\Big(\frac{p-1}{p} \Big)^p \int_\Om \frac{\abs{\nabla u}
        ^p}{\delta^p(x)} dx + C_{CZ,p}\frac{p H_0^p}{N^{p-1}} \int_\Om \abs{\nabla u}
        ^p dx.
    \end{equation}
\end{corollary}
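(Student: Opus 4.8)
The plan is to derive all three inequalities of the corollary by a pointwise substitution: combine the already-proved inequalities \eqref{ec:hessianineq} (from Theorem~\ref{thm1}) and \eqref{ec:laplacianineq-p=2}, \eqref{ec:laplacianineq} (from Corollary~\ref{thm2}) with the curvature estimate \eqref{ec:mean-curv-bound}, which is available precisely because $\Om$ is weakly mean convex. Everything then reduces to an algebraic cancellation of the powers of $(p-1)/p$ in the remainder term.

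First I would fix $u\in C^\infty_c(\Om\setminus\ridge)$. Since $\operatorname{supp} u$ is a compact subset of $\Om\setminus\ridge$, the function $\delta$ is bounded away from $0$ there and \eqref{ec:mean-curv-bound} holds at every point of the support, so all the integrals below are finite. Because $\Om$ is weakly mean convex we have $-\Delta\delta\geq 0$ on $\Om\setminus\ridge$ (cf. Remark~\ref{ec:remark-convexity}); more sharply, \eqref{ec:mean-curv-bound} gives the pointwise lower bound
\[
\frac{-\Delta\delta(x)}{\delta^{p-1}(x)}\;\geq\;\frac{pH_0^p}{N^{p-1}}\Big(\frac{p}{p-1}\Big)^{p-1},\qquad x\in\operatorname{supp} u.
\]
Multiplying by $|\nabla u|^p\geq 0$ and integrating, the remainder term in \eqref{ec:hessianineq} is bounded below by
\[
\Big(\frac{p-1}{p}\Big)^{p-1}\frac{pH_0^p}{N^{p-1}}\Big(\frac{p}{p-1}\Big)^{p-1}\int_\Om|\nabla u|^p\,dx=\frac{pH_0^p}{N^{p-1}}\int_\Om|\nabla u|^p\,dx,
\]
since the factors $\big(\tfrac{p-1}{p}\big)^{p-1}$ and $\big(\tfrac{p}{p-1}\big)^{p-1}$ cancel. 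Inserting this into \eqref{ec:hessianineq} yields \eqref{ec:hessianineq-mean}.

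The two Laplacian statements follow identically, now starting from Corollary~\ref{thm2}. For $p=2$ I would apply the same bound to the remainder $\tfrac12\int_\Om\tfrac{|\nabla u|^2}{\delta}(-\Delta\delta)\,dx$ in \eqref{ec:laplacianineq-p=2}: here \eqref{ec:mean-curv-bound} with $p=2$ reads $-\Delta\delta\geq \tfrac{4H_0^2}{N}\delta$, so the remainder is at least $\tfrac12\cdot\tfrac{4H_0^2}{N}\int_\Om|\nabla u|^2\,dx=\tfrac{2H_0^2}{N}\int_\Om|\nabla u|^2\,dx$, which is \eqref{ec:laplacianineq-p=2-mean}. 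For $p\in(1,\infty)\setminus\{2\}$ I would repeat the Hessian computation applied to the remainder of \eqref{ec:laplacianineq}, carrying the extra positive factor $C_{CZ,p}$ in front, and obtain \eqref{ec:laplacianineq-mean}. Boundedness of $\Om$ is used only at this point, to guarantee the Calderón–Zygmund inequality \eqref{eq:CZineq} underlying Corollary~\ref{thm2}.

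There is essentially no obstacle here: the whole argument is a pointwise insertion of \eqref{ec:mean-curv-bound} followed by the identity $\big(\tfrac{p-1}{p}\big)^{p-1}\big(\tfrac{p}{p-1}\big)^{p-1}=1$. The only points deserving a word of care are (i) that $u$ being supported away from $\ridge$ makes all the integrals finite and legitimizes the pointwise use of \eqref{ec:mean-curv-bound} on $\operatorname{supp} u$, and (ii) recording that weakly mean convexity is exactly what provides both $-\Delta\delta\geq 0$ and the stronger bound \eqref{ec:mean-curv-bound}.
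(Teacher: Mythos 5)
Your argument is exactly the paper's: the corollary is stated there as a direct consequence of substituting the curvature bound \eqref{ec:mean-curv-bound} into the remainder terms of \eqref{ec:hessianineq}, \eqref{ec:laplacianineq-p=2} and \eqref{ec:laplacianineq}, and you correctly track the cancellation $\bigl(\tfrac{p-1}{p}\bigr)^{p-1}\bigl(\tfrac{p}{p-1}\bigr)^{p-1}=1$ in all three cases. The proposal is correct and matches the paper's intended proof.
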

The following result gives sufficient conditions for the inequality \eqref{ec:hessianineq} to hold for functions in $C^\infty_c(\Om)$.
\begin{theorem}   \label{thmExtension}
    Let $N\geq 2$, $1<p<\infty$ and $\Om\subset\R^N$ an open bounded domain with $\prt\Om$ belonging to class $C^2$. Assume that the ridge $\ridge$ set can be written as an intersection of a decreasing family of open sets $(\Om_\eps)_{\eps>0}$ with smooth boundaries such that:
    \begin{enumerate}
        \item[i)] $\ridge$ has zero Lebesgue measure; 
        \item[ii)] for sufficiently small $\eps$: $\nabla\delta\cdot \nu_\eps \geq 0$, \;\;$x\in\prt\Om_\eps$,\;\; $\nu_\eps$ is the interior unit normal to $\prt\Om_\eps$;
        \item[iii)] $\Delta\delta \in L^1(\Om\setminus\Om_\eps)$. 
    \end{enumerate}
    Then inequality \eqref{ec:hessianineq} holds for any $u\in C^\infty_c(\Om)$.
\end{theorem}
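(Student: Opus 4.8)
The plan is an exhaustion argument: to run the proof of Theorem~\ref{thm1} on the smooth subdomains $\Om\setminus\Om_\eps$, on which $\delta\in C^2$ (away from $\ridge$) and, by hypothesis iii), $\Delta\delta\in L^1$, so that all ingredients of \eqref{ec:hessianineq} are available, and then to let $\eps\to0$. The only feature that distinguishes this situation from Theorem~\ref{thm1} is that a test function $u\in C^\infty_c(\Om)$ need not vanish on the inner boundary $\prt\Om_\eps$, so the integration by parts generates an extra boundary integral over $\prt\Om_\eps$; hypotheses i)--iii) are designed precisely so that (ii) fixes the favourable sign of this term, (iii) legitimizes the integration by parts on $\Om\setminus\Om_\eps$, and (i) permits the passage to the limit.

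I would first fix $u\in C^\infty_c(\Om)$ and record two elementary facts. Since $\mathrm{supp}\,u$ is a compact subset of $\Om$, one has $\delta\ge c_0>0$ on $\mathrm{supp}\,u$, so $\delta^{-p},\delta^{-(p-1)}$ are bounded there and $\int_\Om|\hess u|^p\,dx$, $\int_\Om\frac{|\nabla u|^p}{\delta^p}\,dx$, $\int_\Om\frac{|\nabla u|^p}{\delta^{p-1}}\,dx$ are all finite; and since $\prt\Om\in C^2$ with $\Om$ bounded, the principal curvatures of $\prt\Om$ are bounded, whence the standard representation of $-\Delta\delta$ through these curvatures (valid on $\Om\setminus\ridge$) gives a lower bound $-\Delta\delta\ge -K$ there for some $K=K(\Om)\ge0$. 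Then, for each small $\eps>0$, I would reproduce the proof of Theorem~\ref{thm1} — which rests on applying the scalar Hardy inequality with distance \eqref{ec:Hardy-Dist-rmnd} to $w=|\nabla u|$, admissible because $\big|\nabla|\nabla u|\big|\le|\hess u|$ a.e., after integrating by parts against the field $\delta^{1-p}\nabla\delta$ — but on $\Om\setminus\Om_\eps$ in place of $\Om$. Because $\mathrm{supp}\,u\cap\prt\Om=\emptyset$, the divergence theorem leaves only the inner boundary term
\[
\Big(\tfrac{p-1}{p}\Big)^{p-1}\int_{\prt\Om_\eps}\frac{|\nabla u|^p}{\delta^{p-1}}\,(\nabla\delta\cdot\nu_\eps)\,dS\ \ge\ 0,
\]
which is nonnegative by (ii) and may thus be dropped, giving, for all small $\eps$,
\[
\int_{\Om\setminus\Om_\eps}|\hess u|^p\,dx\ \ge\ \Big(\tfrac{p-1}{p}\Big)^p\int_{\Om\setminus\Om_\eps}\frac{|\nabla u|^p}{\delta^p}\,dx+\Big(\tfrac{p-1}{p}\Big)^{p-1}\int_{\Om\setminus\Om_\eps}\frac{|\nabla u|^p}{\delta^{p-1}}(-\Delta\delta)\,dx.
\]

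Finally I would let $\eps\to0$. By hypothesis i), $\bigcup_{\eps}(\Om\setminus\Om_\eps)=\Om\setminus\ridge$ has full Lebesgue measure in $\Om$, so by dominated convergence $\int_{\Om\setminus\Om_\eps}|\hess u|^p\,dx\to\int_\Om|\hess u|^p\,dx$ and $\int_{\Om\setminus\Om_\eps}\frac{|\nabla u|^p}{\delta^p}\,dx\to\int_\Om\frac{|\nabla u|^p}{\delta^p}\,dx$. For the remainder, I would split $-\Delta\delta=(-\Delta\delta+K)-K$: the nonnegative part converges by monotone convergence to a value in $[0,+\infty]$, the bounded part by dominated convergence, and the displayed inequality together with the finiteness recorded above forces this value to be finite and to coincide with $\int_\Om\frac{|\nabla u|^p}{\delta^{p-1}}(-\Delta\delta)\,dx$. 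Passing to the limit in the last display then yields \eqref{ec:hessianineq} for every $u\in C^\infty_c(\Om)$.

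The step I expect to be the main obstacle is this passage to the limit in the remainder $\int\frac{|\nabla u|^p}{\delta^{p-1}}(-\Delta\delta)\,dx$: globally $\Delta\delta$ need not be integrable — it may carry a singular part concentrated on $\ridge$ — so dominated convergence cannot be applied directly, and the resolution is exactly the interplay between the one-sided bound $-\Delta\delta\ge -K$ (from $\prt\Om\in C^2$ and $\Om$ bounded) and the a priori upper bound furnished by the inequality itself. A secondary technical point is to verify that the integration by parts in the proof of Theorem~\ref{thm1} survives verbatim on $\Om\setminus\Om_\eps$ under hypothesis iii) — in particular that the divergence theorem is licit on the smooth domain $\Om\setminus\Om_\eps$ with the mildly singular field $\delta^{1-p}\nabla\delta$, and that no contribution arises from $\prt\Om$ because $\mathrm{supp}\,u\cap\prt\Om=\emptyset$.
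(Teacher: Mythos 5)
Your proposal follows the same route as the paper: integrate by parts on $\Om\setminus\Om_\eps$ (where $\delta\in C^2$ and $\Delta\delta\in L^1$ by hypothesis iii)), observe that the only new boundary term lives on $\prt\Om_\eps$ and has the favourable sign guaranteed by hypothesis ii), discard it to obtain the analogue of \eqref{ec:hessianineq} on $\Om\setminus\Om_\eps$, and then pass to the limit $\eps\to 0$ using hypothesis i). The computation, the role played by each hypothesis, and the structure of the argument all match the paper's proof.

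The one place where you genuinely diverge, to your credit, is the passage to the limit in the remainder term $\int\frac{|\nabla u|^p}{\delta^{p-1}}(-\Delta\delta)\,dx$. The paper invokes Fatou's lemma on the integrand $\frac{|\nabla u|^p}{\delta^{p-1}}(-\Delta\delta)\,\chi_{\Om\setminus\Om_\eps}$, but Fatou requires nonnegativity (or at least a uniformly integrable lower bound), and the theorem does \emph{not} assume $-\Delta\delta\ge 0$. You make this step precise by noting that $-\Delta\delta$ is bounded below on $\Om\setminus\ridge$ (from the representation $\Delta\delta=\sum_l \kappa_l/(1+\delta\kappa_l)$ and $\prt\Om\in C^2$ compact), writing $-\Delta\delta=(-\Delta\delta+K)-K$, handling the nonnegative part by monotone convergence, the constant part by dominated convergence, and extracting finiteness of the limit from the a priori bound supplied by the inequality itself. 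This is exactly the justification the paper needs and leaves implicit; your version is the more careful one. One minor inaccuracy: the paper's proof of Theorem \ref{thm1} does not literally ``apply the scalar Hardy inequality to $w=|\nabla u|$''; it redoes the integration by parts directly, using $\nabla\delta\cdot\nabla(|\nabla u|^p)=p|\nabla u|^{p-2}\,\nabla\delta\,\hess u\,(\nabla u)^T$ and Cauchy--Schwarz to reach $|\nabla u|^{p-1}|\hess u|$. Since you go on to reproduce that computation anyway, this phrasing has no bearing on the correctness of your argument.
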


Some domains that fit conditions $i)$, $iii)$ of Theorem \ref{thmExtension} are open balls and ellipses. Next, we state a new result for functions depending on the distance to the boundary in any domain, not necessarily weakly mean convex domains.
\begin{theorem}   \label{thm3}
    Let $N\geq 2$, $1< p<\infty$ and $\Om\subset\R^N$ an open domain with $\prt\Om$ belonging to class $C^2$. Then for any $u\in C^\infty_c(\Om\setminus\ridge)$ depending on the distance to the boundary $\left(i.e.\; u(x)=g(\delta(x))\right)$ it holds the following
    \begin{equation}   \label{ec:laplacianineq3}
        \int_\Om \abs{\Delta u}^p dx \geq \Big(\frac{p-1}{p} \Big)^p \int_\Om \frac{\abs{\nabla u}^p}{\delta^p(x)} dx + p\Big(\frac{p-1}{p} \Big)^p\int_\Om \frac{\abs{\nabla u}^p}{\delta^{p-1}(x)} \Delta\delta dx.
    \end{equation}
    Particularly, if $\Delta\delta \geq 0$ in $\Om\setminus\ridge$, then
    \begin{equation}   \label{ec:laplacianineq4}
        \int_\Om \abs{\Delta u}^p dx \geq \Big(\frac{p-1}{p} \Big)^p \int_\Om \frac{\abs{\nabla u}^p}{\delta^p(x)} dx.
    \end{equation}
\end{theorem}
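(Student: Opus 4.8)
\textbf{Proof proposal for Theorem \ref{thm3}.}

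The plan is to exploit the fact that when $u(x)=g(\delta(x))$ depends only on the distance function, the Laplacian $\Delta u$ collapses to a one-dimensional expression in $\delta$, which can then be compared directly with the gradient term. First I would compute, using $\abs{\nabla\delta}=1$ a.e. on $\Om\setminus\ridge$ (this is the key elementary property of the distance function away from the ridge set), that
\[
\nabla u = g'(\delta)\nabla\delta, \qquad \Delta u = g''(\delta)\abs{\nabla\delta}^2 + g'(\delta)\Delta\delta = g''(\delta) + g'(\delta)\Delta\delta.
\]
Thus $\abs{\nabla u}^p = \abs{g'(\delta)}^p$, and the target inequality \eqref{ec:laplacianineq3} becomes a statement purely about the scalar function $g$ and the measure-theoretic quantity $\Delta\delta$. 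The strategy is then to bound $\abs{\Delta u}^p = \abs{g''(\delta)+g'(\delta)\Delta\delta}^p$ from below. Since $\Delta\delta$ may change sign I cannot simply drop a term; instead I would split according to the sign, or more robustly use the elementary convexity/expansion inequality $\abs{a+b}^p \ge \abs{a}^p + p\abs{a}^{p-2}a\,b$ valid for all real $a,b$ and $p>1$ (the tangent-line inequality for the convex function $t\mapsto\abs{t}^p$), applied with $a=g''(\delta)$, $b=g'(\delta)\Delta\delta$. This yields
\[
\int_\Om \abs{\Delta u}^p dx \;\ge\; \int_\Om \abs{g''(\delta)}^p dx \;+\; p\int_\Om \abs{g''(\delta)}^{p-2} g''(\delta)\, g'(\delta)\,\Delta\delta\, dx.
\]

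The main work is then to handle $\int_\Om \abs{g''(\delta)}^p dx$ and the cross term. For the first, I would invoke the one-dimensional $L^p$ Hardy inequality for $g'$ (equivalently, the sharp Hardy inequality in the half-line variable $t=\delta$ with the standard constant $\left(\frac{p-1}{p}\right)^p$), which after the coarea-type reduction along level sets of $\delta$ gives $\int_\Om \abs{g''(\delta)}^p dx \ge \left(\frac{p-1}{p}\right)^p\int_\Om \frac{\abs{g'(\delta)}^p}{\delta^p}dx + \text{(boundary/ridge terms involving }\Delta\delta)$. In fact a cleaner route, mirroring the proof of Theorem \ref{thm1}/\eqref{ec:Hardy-Dist-rmnd}, is: start from the vector field identity used there, namely expand $\dvg\!\left(\frac{\abs{g'(\delta)}^{p}}{\delta^{p-1}}\nabla\delta\right)$ or test against an appropriate multiple of $\delta^{1-p}$; integrating by parts on $\Om\setminus\ridge$ (legitimate since $u\in C^\infty_c(\Om\setminus\ridge)$ and $\delta\in C^2$ there) produces exactly the two terms $\left(\frac{p-1}{p}\right)^p\int \frac{\abs{\nabla u}^p}{\delta^p}$ and $p\left(\frac{p-1}{p}\right)^p\int\frac{\abs{\nabla u}^p}{\delta^{p-1}}\Delta\delta$ on the right, together with a nonnegative remainder coming from a pointwise convexity inequality (Young's inequality in the form $\abs{g''}^p \ge$ linear-in-$g''$ lower bound). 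Comparing the coefficients: note $p\left(\frac{p-1}{p}\right)^p$ is precisely the coefficient in \eqref{ec:laplacianineq3}, which is consistent with $\Delta u$ for radial-in-$\delta$ functions carrying the full Laplacian rather than just $\abs{\hess u}$, explaining why the factor $p$ appears here but not $\left(\frac{p-1}{p}\right)^{p-1}$ as in \eqref{ec:hessianineq}.

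The main obstacle I anticipate is the rigorous manipulation of $\Delta\delta$ as an integrable function (not merely a distribution) together with the integration by parts on the non-smooth set $\Om\setminus\ridge$: one must verify that the compact support of $u$ inside $\Om\setminus\ridge$ keeps all integrands well-defined and that no boundary contribution from $\ridge$ is dropped — this is exactly why the hypothesis $u\in C^\infty_c(\Om\setminus\ridge)$ is imposed, and it makes the integration by parts clean, since $u$ and all its derivatives vanish near both $\prt\Om$ and $\ridge$. The second delicate point is pinning down the correct pointwise inequality that produces the coefficient $p\left(\frac{p-1}{p}\right)^p$ exactly (and not merely an inequality with a worse constant): this requires choosing the test vector field $\frac{\abs{g'(\delta)}^{p-2}g'(\delta)}{\delta^{p-1}}\nabla\delta$ (or the analogous one built from $\Delta u$ rather than the Hessian) so that the divergence identity reproduces the stated right-hand side term by term. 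The particular case $\Delta\delta\ge0$ then follows immediately by discarding the nonnegative second term, and I would remark that $\Delta\delta\ge0$ on $\Om\setminus\ridge$ holds for instance on complements of convex sets, by \eqref{ec:delta-2} applied to the exterior domain, so \eqref{ec:laplacianineq4} is non-vacuous precisely in the regime complementary to the weakly mean convex case of Theorem \ref{thm1}.
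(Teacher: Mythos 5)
Your ``cleaner route'' is essentially the paper's proof: compute $\Delta u = g''(\delta) + g'(\delta)\Delta\delta$ and $|\nabla u| = |g'(\delta)|$ from the formula for the Laplacian of a function of $\delta$; rewrite $\delta^{-p} = \tfrac{1}{1-p}\nabla(\delta^{1-p})\cdot\nabla\delta$ via the eikonal equation; integrate by parts in $\int_\Om |g'|^p\delta^{-p}\,dx$ to obtain
$$\int_\Om \frac{|\nabla u|^p}{\delta^p}\,dx = \frac{p}{p-1}\int_\Om |g'|^{p-2}g'g''\,\delta^{1-p}\,dx + \frac{1}{p-1}\int_\Om |g'|^p\,\delta^{1-p}\Delta\delta\,dx;$$
then add and subtract $\Delta\delta\,g'$ inside to reconstruct $\Delta u = g''+\Delta\delta g'$ in the first integral, apply Young's inequality to the product $\frac{p}{p-1}|g'|^{p-2}g'\,\Delta u\,\delta^{1-p}$ with $a = \tfrac{p}{p-1}\Delta u$ and $b = |g'|^{p-2}g'\delta^{1-p}$, and rearrange. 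You correctly observe that the factor $p\left(\tfrac{p-1}{p}\right)^p$ (rather than $\left(\tfrac{p-1}{p}\right)^{p-1}$) arises because here the Young step is fed the full $\Delta u$ rather than the Hessian, so there is no separate remainder term to absorb; and you correctly note that $u\in C^\infty_c(\Om\setminus\ridge)$ makes the integration by parts boundary-free.

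Two remarks. First, your initial route via the tangent-line inequality $|a+b|^p\geq |a|^p+p|a|^{p-2}ab$ with $a=g''$, $b=g'\Delta\delta$ is not carried to completion and, as sketched, has a genuine gap: after this step you would still need both $\int_\Om|g''|^p\,dx \geq \left(\tfrac{p-1}{p}\right)^p\int_\Om\frac{|g'|^p}{\delta^p}\,dx + (\text{extra }\Delta\delta\text{ terms})$ and a matching of the cross term $p\int|g''|^{p-2}g''g'\Delta\delta$ against $p\left(\tfrac{p-1}{p}\right)^p\int\frac{|g'|^p}{\delta^{p-1}}\Delta\delta$, neither of which is automatic --- a coarea reduction introduces the weight $|\Sigma_s|$ and its $s$-derivative, which is itself an averaged $\Delta\delta$, so this route requires substantially more bookkeeping than the divergence-identity route. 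Second, the phrase ``Young's inequality in the form $|g''|^p\geq$ linear-in-$g''$ lower bound'' misstates the mechanism: Young's inequality is applied to give an \emph{upper} bound on the cross term $\int |g'|^{p-2}g'\Delta u\,\delta^{1-p}$ in terms of $\int|\Delta u|^p$ and $\int|g'|^p\delta^{-p}$, not a lower bound on $|g''|^p$; your closing sentence (``the analogous one built from $\Delta u$ rather than the Hessian'') corrects this, so the conceptual picture is right even though the intermediate description is loose.
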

According to Remark \ref{ec:remark-convexity}, the exterior of convex sets (i.e., $\Om:=\R^N\setminus \overline{D}$, $D$ is convex) fits to \eqref{ec:laplacianineq4} in Theorem \ref{thm3}.  

In the latter, we give some applications to a nonlinear problem, which we introduce below.  

    For any $1<p<\infty$ the $p-$Bilaplacian operator is formally defined for functions $u\in C_c^\infty(\Omega)$ as
     \begin{equation}   \label{ec:p-Bilaplaciandef}
        \Delta^2_p u := \Delta(\abs{\Delta u}^{p-2}\Delta u).
    \end{equation}
In the following, let $N\geq 3$, $\Omega$ a bounded domain with smooth boundary, $\lmb\in\R$, $1<p$ and $1<q$. We study the existence of nontrivial solutions for problem 
    \[\left\{\begin{array}{cc}  \tag{$P_\lmb$}\label{ec:systemP}
    \Delta^2_p u + \lmb\;\dvg\left(\frac{\abs{\nabla u}^{p-2} \nabla u}{\delta^p}\right) = \abs{u}^{q-2}u,  &  x\in\Om, \\
    u=\frac{\prt u}{\prt \n}=0,  &  x \in\prt\Om. 
\end{array} \right.\] 
Here, $\n$ denotes the outward unit normal to $\prt\Om$. Similar problems with singular potential given in the form of $\frac{1}{\abs{x}^\alpha}$ were studied before in works such as the comprehensive collection \cite{peral-monograph} and others, see \cite{bhakta}, \cite{drissi}, \cite{wang} and references therein. Problem \eqref{ec:systemP} seems to be new in literature and the aforementioned inequalities of type \eqref{HR} are crucial for the existence of solutions.\\
The appropriate functional space where we study the existence of weak solutions of \eqref{ec:systemP} is the Sobolev space $W^{2,p}_0(\Om)$ (see \cite{davies-hinz}), endowed with the norm 
\begin{equation}
    \|u\|_{W^{2,p}_0(\Om)}:=\|\Delta u\|_{L^p(\Om)}.
\end{equation}
Note that, for $\lmb<\lmb^\sharp\left(p,\Om,Z=\frac{1}{\delta^p}\right)$, we could also use the equivalent norm given by 
\begin{equation}   \label{ec:equivnorms}
\|u\|:= \left(\|\Delta u\|_{L^p(\Om)}^p - \lmb\int_\Om \frac{\abs{\nabla u}^p}{\delta^p(x)} dx \right)^\frac{1}{p}.
\end{equation}
This is due to the Hardy-Rellich inequality \eqref{HR}:
\begin{equation}
    \left(1-\frac{\lmb^+}{\lmb^\sharp\left(p,\Om,Z=\frac{1}{\delta^p}\right)}\right)\|u\|_{\W}^p \leq \|u\|^p \leq \left(1+\frac{\lmb^-}{\lmb^\sharp\left(p,\Om,Z=\frac{1}{\delta^p}\right)}\right)\|u\|_{\W}^p,
\end{equation}
where $\lmb^+$ and $\lmb^-$ are the positive and negative part of $\lmb$, respectively. Denote by $p^{**}:=\frac{Np}{N-2p}$ the critical Sobolev exponent in the embedding (\cite[Thoerem 7.22]{gilbardtrud})
\begin{equation}    \label{embed}
    W^{2,p}_0(\Om)\hookrightarrow L^q(\Om),
\end{equation}
which is compact for:
\begin{enumerate}
    \item[i)] any $1< q<p^{**}$ when $1<p<\frac{N}{2}$;\\
    \item[ii)] any $1<q$ when $p\geq \frac{N}{2}$.
\end{enumerate}
\begin{definition}   \label{weak-sol}
We say that $u\in\W$ is a weak solution to the problem \eqref{ec:systemP} if it satisfies the following weak formulation:
\begin{equation}    \label{weak-sol}
\int_\Om \abs{\Delta u}^{p-2}\Delta u \Delta\phi dx - \lmb\int_\Om \frac{\abs{\nabla u}^{p-2}}{\delta^p(x)}\nabla u\nabla \phi dx = \int_\Om \abs{u}^{q-2}u\phi dx, \;\;\; \forall \phi\in\W.    
\end{equation}
\end{definition}
It is straightforward, by integration by parts, that any possible classical solution of \eqref{ec:systemP} is a weak solution.
\begin{theorem}   \label{thmExistence}
    There exists at least one non-trivial weak solution for the problem \eqref{ec:systemP} in the following cases:
    \begin{enumerate}
        \item[1.]\; for any $1<q<p$ and any $\lmb<\lmb^\sharp(p, \Om, Z=\frac{1}{\delta^p})$;\\
        \item [2.]\; for any $1<p<\frac{N}{2}$, $p<q<p^{**}$ and any $\lmb\leq 0$.
    \end{enumerate}
\end{theorem}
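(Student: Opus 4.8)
The plan is to realize weak solutions of \eqref{ec:systemP} as critical points of the energy functional $J_\lmb\colon\W\to\R$,
\begin{equation*}
  J_\lmb(u)=\frac1p\left(\int_\Om\abs{\Delta u}^p\,dx-\lmb\int_\Om\frac{\abs{\nabla u}^p}{\delta^p(x)}\,dx\right)-\frac1q\int_\Om\abs{u}^q\,dx=\frac1p\,\|u\|^p-\frac1q\,\|u\|_{L^q(\Om)}^q ,
\end{equation*}
where $\|\cdot\|$ is the equivalent norm of \eqref{ec:equivnorms}; this is legitimate since $\lmb<\lmb^\sharp(p,\Om,Z=\tfrac1{\delta^p})$ in case $1$ and $\lmb\le0<\lmb^\sharp(p,\Om,Z=\tfrac1{\delta^p})$ in case $2$. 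The Hardy--Rellich inequality \eqref{HR} makes the singular middle term finite on $\W$, so $J_\lmb$ is well defined, and a routine Nemytskii-map argument --- invoking \eqref{HR} once more for the weighted term and the compact embedding \eqref{embed} for the last term --- gives $J_\lmb\in C^1(\W;\R)$, with $J_\lmb'(u)=0$ if and only if $u$ solves \eqref{ec:systemP} in the weak sense of \eqref{weak-sol}. It thus suffices to exhibit a nonzero critical point of $J_\lmb$ in each regime.

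\textit{Case $1$ $(1<q<p)$: direct minimization.} Since $q<p$, the compact embedding \eqref{embed} gives $\|u\|_{L^q(\Om)}^q\le C\|u\|^q=o(\|u\|^p)$ as $\|u\|\to\infty$, so $J_\lmb$ is coercive; it is also weakly lower semicontinuous, because $u\mapsto\|u\|^p$ is weakly lower semicontinuous and $u\mapsto\|u\|_{L^q(\Om)}^q$ is weakly continuous by compactness of \eqref{embed}. Hence $J_\lmb$ attains a global minimum at some $u^\ast\in\W$. For a fixed $v\in\W\setminus\{0\}$ one has $J_\lmb(tv)=\tfrac{t^p}{p}\|v\|^p-\tfrac{t^q}{q}\|v\|_{L^q(\Om)}^q<0$ for $t>0$ small (because $q<p$), so $\min_\W J_\lmb<0=J_\lmb(0)$, whence $u^\ast\neq0$; since $J_\lmb'(u^\ast)=0$, $u^\ast$ is the desired nontrivial weak solution.

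\textit{Case $2$ $(1<p<\tfrac N2,\; p<q<p^{**},\; \lmb\le0)$: Mountain Pass.} The mountain-pass geometry is standard: $J_\lmb(0)=0$; on a small sphere $\{\|u\|=\rho\}$ one has $J_\lmb(u)\ge\tfrac1p\rho^p-\tfrac Cq\rho^q\ge\alpha>0$ (using $q>p$ and \eqref{embed}); and $J_\lmb(tv)\to-\infty$ as $t\to+\infty$ for fixed $v\neq0$ (again $q>p$), which produces $e\in\W$ with $\|e\|>\rho$ and $J_\lmb(e)<0$. For the Palais--Smale condition at a level $c$: if $J_\lmb(u_n)\to c$ and $J_\lmb'(u_n)\to0$ in $(\W)^\ast$, then
\begin{equation*}
  \Big(\tfrac1p-\tfrac1q\Big)\|u_n\|^p=J_\lmb(u_n)-\tfrac1q\langle J_\lmb'(u_n),u_n\rangle\le c+1+o(\|u_n\|),
\end{equation*}
which, since $p>1$ and $q>p$, forces $(u_n)$ to be bounded; along a subsequence $u_n\rightharpoonup u$ in $\W$ and $u_n\to u$ in $L^q(\Om)$ by \eqref{embed}. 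Denoting by $A$ the principal operator $\langle Au,\phi\rangle=\int_\Om\abs{\Delta u}^{p-2}\Delta u\,\Delta\phi\,dx-\lmb\int_\Om\frac{\abs{\nabla u}^{p-2}\nabla u\cdot\nabla\phi}{\delta^p(x)}\,dx$, the facts $\langle J_\lmb'(u_n),u_n-u\rangle\to0$ and $\int_\Om\abs{u_n}^{q-2}u_n(u_n-u)\,dx\to0$ (H\"older plus $L^q$-convergence) give $\langle Au_n,u_n-u\rangle\to0$; as $\lmb\le0$, $A$ is the Fr\'echet derivative of the strictly convex $C^1$ functional $u\mapsto\tfrac1p\|u\|^p$ --- the sum of a $p$-bilaplacian energy and a nonnegatively weighted $p$-Dirichlet energy --- hence $A$ enjoys the $(S_+)$ property and $u_n\to u$ strongly in $\W$. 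So $(PS)_c$ holds for every $c$, and the Mountain Pass Theorem yields a critical point at level $c\ge\alpha>0$, i.e.\ a nontrivial weak solution of \eqref{ec:systemP}.

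\textit{Main obstacle.} The crux is the compactness in case $2$, namely the $(S_+)$ property of $A$. From $\langle Au_n-Au,u_n-u\rangle\to0$ one splits the quantity into the $p$-bilaplacian part and the weighted $p$-Laplacian part; since $\lmb\le0$ the latter is nonnegative, so both parts tend to $0$, and the classical strict-monotonicity inequality for $\xi\mapsto\abs{\xi}^{p-2}\xi$ --- treated separately for $p\ge2$ and $1<p<2$ --- applied to $\Delta u_n$ in $L^p(\Om)$ forces $\Delta u_n\to\Delta u$ in $L^p(\Om)$, i.e.\ $u_n\to u$ in $\W$. This is exactly where the sign hypothesis $\lmb\le0$ of statement $2$ is genuinely used; handling $0<\lmb<\lmb^\sharp(p,\Om,Z=\tfrac1{\delta^p})$ would require a finer splitting argument based on \eqref{HR}. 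A secondary but indispensable point throughout is the $C^1$ smoothness of $J_\lmb$ in the presence of the singular weight $1/\delta^p$, which relies on the Hardy--Rellich inequality of Theorem \ref{thm1} holding with a strictly positive constant.
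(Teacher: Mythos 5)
Your proposal follows essentially the same strategy as the paper: Case 1 by direct minimization (coercivity from Hardy--Rellich and $q<p$, weak lower semicontinuity of the equivalent norm, compactness of \eqref{embed} for the $L^q$ term, and a small-$t$ scaling to see the infimum is negative), and Case 2 by the Mountain Pass Theorem with boundedness of Palais--Smale sequences from $\bigl(\tfrac1p-\tfrac1q\bigr)\|u_n\|^p=J_\lmb(u_n)-\tfrac1q\langle J_\lmb'(u_n),u_n\rangle$ and strong convergence obtained by using $\lmb\le0$ to discard the sign-definite singular middle term. The only cosmetic difference is that you package the final compactness step as the $(S_+)$ property of the principal operator, whereas the paper invokes the explicit monotonicity inequalities \eqref{ineqlindqvist>2} and \eqref{ineqlindqvist<2} for $p\ge 2$ and $1<p<2$ respectively; the underlying computation is identical.
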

Moreover, we prove a non-existence result in a ball centered at $0$ (for simplicity, consider the ball of radius $1$).
\begin{theorem}   \label{thmNonExistence}
    If $\Om$ is the unit ball in $\R^N$, $N\geq 2$, $1<p<\frac{N}{2}$ and either ($\lmb<0$ and $q\geq p^{**}$), or ($\lmb\leq 0$ and $q> p^{**}$),    
    then there exist no non-trivial solutions of \eqref{ec:systemP}.
\end{theorem}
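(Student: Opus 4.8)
The plan is to combine the energy identity for \eqref{ec:systemP} with a Rellich--Pohozaev identity obtained by using the dilation field $x\cdot\nabla u$ as a multiplier, and then to conclude from a sign inspection in which the critical exponent $p^{**}=\tfrac{Np}{N-2p}$ and the sign of $\lmb$ play the decisive roles. The first ingredient is immediate: testing the weak formulation of \eqref{ec:systemP} with $\phi=u$ gives the energy identity
\[
\int_\Om\abs{\Delta u}^p\,dx-\lmb\int_\Om\frac{\abs{\nabla u}^p}{\delta^p(x)}\,dx=\int_\Om\abs{u}^q\,dx .
\]

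For the Pohozaev identity I would exploit that $\Om$ is the unit ball: there $\delta(x)=1-\abs{x}$, so $x\cdot\nabla\delta=-\abs{x}=\delta-1$ and $\Delta\delta=-(N-1)/\abs{x}$ on $\Om\setminus\{0\}$, the ridge is the single point $\ridge=\{0\}$, $x\cdot\n=1$ on $\prt\Om$, and the boundary conditions $u=\tfrac{\prt u}{\prt\n}=0$ force $\nabla u\equiv0$ on $\prt\Om$, whence $\Delta u=\tfrac{\prt^2u}{\prt\n^2}$ there. I would multiply the equation of \eqref{ec:systemP} by $x\cdot\nabla u$, integrate over the annulus $\Om_\eps=\{\eps<\abs{x}<1-\eps\}$, integrate by parts, and let $\eps\to0$ (the inner sphere $\{\abs{x}=\eps\}$ gives a vanishing contribution, which disposes of the non-smoothness of $\delta$ at the ridge). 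The $p$-biharmonic part yields, after two integrations by parts using $\Delta(x\cdot\nabla u)=x\cdot\nabla(\Delta u)+2\Delta u$,
\[
\int_\Om\Delta\bigl(\abs{\Delta u}^{p-2}\Delta u\bigr)(x\cdot\nabla u)\,dx=\frac{2p-N}{p}\int_\Om\abs{\Delta u}^p\,dx-\frac{p-1}{p}\int_{\prt\Om}\abs{\Delta u}^p\,dS ;
\]
the singular first-order term, handled with $\nabla(x\cdot\nabla u)=\nabla u+(x\cdot\nabla)\nabla u$, the elementary identity $\abs{w}^{p-2}w\cdot(x\cdot\nabla)w=\tfrac1p\,x\cdot\nabla(\abs{w}^p)$ (valid for any vector field $w$), and $\dvg(x\,\delta^{-p})=(N-p)\delta^{-p}+p\,\delta^{-p-1}$ (this last being special to the ball, since it uses $x\cdot\nabla\delta=\delta-1$), contributes $\lmb$ times $\bigl(-\tfrac{2p-N}{p}\int_\Om\tfrac{\abs{\nabla u}^p}{\delta^p}\,dx+\int_\Om\tfrac{\abs{\nabla u}^p}{\delta^{p+1}}\,dx+\tfrac{p-1}{p}\int_{\prt\Om}\abs{\Delta u}^p\,dS\bigr)$; and the right-hand side contributes $\int_\Om\abs{u}^{q-2}u\,(x\cdot\nabla u)\,dx=\tfrac1q\int_\Om x\cdot\nabla(\abs{u}^q)\,dx=-\tfrac Nq\int_\Om\abs{u}^q\,dx$. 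Adding everything, eliminating $\int_\Om\abs{\Delta u}^p\,dx-\lmb\int_\Om\tfrac{\abs{\nabla u}^p}{\delta^p}\,dx$ via the energy identity, and using $\tfrac{2p-N}{p}+\tfrac Nq=N(\tfrac1q-\tfrac1{p^{**}})$, I arrive at
\[
N\Bigl(\frac1q-\frac1{p^{**}}\Bigr)\int_\Om\abs{u}^q\,dx+\lmb\int_\Om\frac{\abs{\nabla u}^p}{\delta^{p+1}(x)}\,dx=\frac{p-1}{p}\,(1-\lmb)\int_{\prt\Om}\abs{\Delta u}^p\,dS ,
\]
where the precise constant on the right is unimportant: what matters is that it is a nonnegative multiple of $\int_{\prt\Om}\abs{\Delta u}^p\,dS$ whenever $\lmb\le0$.

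The conclusion is then a matter of signs. Suppose $\lmb\le0$ and $q\ge p^{**}$. Then $1-\lmb\ge1>0$ makes the right-hand side $\ge0$; the factor $N(\tfrac1q-\tfrac1{p^{**}})\le0$ makes the first left-hand term $\le0$; and $\lmb\le0$ makes the middle term $\le0$. Since the two left-hand terms sum to the nonnegative right-hand side, all three must vanish. If $\lmb<0$, then $\lmb\int_\Om\tfrac{\abs{\nabla u}^p}{\delta^{p+1}}\,dx=0$ forces $\nabla u\equiv0$, hence $u\equiv0$; if $\lmb=0$ and $q>p^{**}$, then $N(\tfrac1q-\tfrac1{p^{**}})<0$ forces $\int_\Om\abs{u}^q\,dx=0$, hence again $u\equiv0$. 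This is exactly the dichotomy in the statement, and it also explains why at the critical exponent $q=p^{**}$ the strict condition $\lmb<0$ is required.

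The main obstacle is the rigorous justification of the Pohozaev identity for a genuine solution: $x\cdot\nabla u$ is not an admissible test function in $W^{2,p}_0(\Om)$, and $\delta^{-p}$ is singular on $\prt\Om$, so the integrations by parts in the boundary layer $\{\delta<\eps\}$ require justification. One expects the super-singular term $\dvg(\abs{\nabla u}^{p-2}\nabla u/\delta^p)$ to force $\Delta u=0$ on $\prt\Om$ for any sufficiently regular solution, which would at once make $\int_\Om\tfrac{\abs{\nabla u}^p}{\delta^{p+1}}\,dx$ finite and kill all the boundary integrals above; pinning this down needs an a priori boundary-regularity statement for solutions of \eqref{ec:systemP}, or else a careful truncation argument controlling the layer $\{\delta<\eps\}$ as $\eps\to0$. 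With that input secured, the remaining dilation computations are routine.
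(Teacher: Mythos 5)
Your approach is essentially the same as the paper's: multiply \eqref{ec:systemP} by $x\cdot\nabla u$, integrate by parts to get a Pohozaev identity, eliminate terms using the energy identity $\int_\Om|\Delta u|^p\,dx-\lmb\int_\Om\frac{|\nabla u|^p}{\delta^p}\,dx=\int_\Om|u|^q\,dx$, exploit that in the ball $x\cdot\n(N(x))=|x|\geq 0$ and $x\cdot\n=1$ on $\prt\Om$, and conclude by a sign inspection. There is one computational slip worth flagging: you attribute an extra boundary term $\frac{p-1}{p}\lmb\int_{\prt\Om}|\Delta u|^p\,dS$ to the singular first-order part, which is why you end up with the coefficient $(1-\lmb)$ on the right-hand side; the term $\dvg\bigl(|\nabla u|^{p-2}\nabla u/\delta^p\bigr)$ involves only $\nabla u$ and $\delta$, and since $\nabla u\equiv 0$ on $\prt\Om$ all its boundary contributions vanish, so the correct coefficient is simply $\frac{p-1}{p}$ (coming from the $p$-biharmonic part alone, as in the paper's identity \eqref{int-poho-2}). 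The error is harmless here because under $\lmb\le 0$ both $\frac{p-1}{p}$ and $\frac{p-1}{p}(1-\lmb)$ are positive, so the sign argument and the resulting dichotomy go through unchanged; your concluding remark about the regularity obstruction (that $x\cdot\nabla u\notin W^{2,p}_0(\Om)$ and that a truncation near $\prt\Om$ is needed) also matches what the paper addresses informally via an approximation argument.
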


The rest of the paper is structured as follows.  Section 3 is dedicated to some preliminary geometric concepts and useful properties that we use later. In Sections 4 and 5 we prove the main theorems regarding the Hardy-Rellich inequalities, while in Sections 6 and 7 we give an application of our results in proving the existence and non-existence of non-trivial solutions for problem \eqref{ec:systemP} via variational methods (direct method and mountain pass theorems) and a Pohozaev-type identity, respectively.

\section{Properties of distance function to the boundary}
\subsection{Regularity}
Our purpose in this section is to describe the regularity of the distance function $\delta$. As we shall see, there is a strong connection between the regularity of $\delta$ and the geometry of $\Om$ and its boundary. These are known properties, treated in works such as \cite{evans}, \cite{fremlin}, \cite{federer} and in the Appendix of \cite{gilbardtrud}. For a brief introduction to this subject, we will follow the recent work of the authors in \cite{balinsky1} and cite the other appropriate references when needed.
Suppose that $\Om$ is a domain in $\R^N$, $N\geq 2$. If $x$ is a point in $\Om$, we denote by $N(x):=\{y\in\prt\Om \;|\; \delta(x)=\abs{x-y}\}$ the \textit{near set} of $x$ on $\prt\Om$. If $N(x)=\{y\}$, we write $y=N(x)$. For $x\in\Om$ and $y\in N(x)$ put $z:=tx+(1-t)y$, where $0<t<1$. Then $N(z)=y$. This tells us that for any point on the segment $[x,y]$ the only near point on the boundary is precisely $y$. Moreover, if we put 
$$\mu:=\sup\{t>0 \;|\; y\in N(y+t[x-y]) \},$$
then, for all $t\in(0,\mu)$, $N(y+t[x-y])=y.$ This translates to the fact that there is a precise point on the line determined by $x$ and $y$ with the property that any point on the line up to this one has $y$ as its near point.
This precise point is defined as $p(x):=y+\mu(x-y)$ and it's called the ridge point of $x$ in $\Om$. The set $\ridge:=\{p(x)\;|\;x\in\Om\}$ is called the \textit{ridge set} of $\Om$. \\
\begin{remark}[\cite{balinsky1}, p. 51]
    Note that if $\Om$ doesn't contain a half-space, the definition of $p(x)$ is consistent; otherwise, for some $x\in\Om$ we have to put $p(x)=\infty$.  It follows that if $x\notin\ridge$ then $\card N(x) =1$, i.e., $N(x)$ is a unique point. The converse is not always true (e.g., $\Om$ an ellipse).
\end{remark}
This remark leads us to the next definition. The \textit{skeleton} of $\Om$ is the set 
\begin{equation}
    \skelet := \{x\in\Om \;|\;\card N(x)>1 \}.
\end{equation}
\begin{prop} [\cite{balinsky1}, p. 51]
    The function $\delta$ is differentiable at $x\in\Om$ if and only if $\card N(x)=1$. If $N(x)=y$, then $\nabla\delta(x)=\frac{x-y}{\abs{x-y}}$ is continuous on its domain of definition.
\end{prop}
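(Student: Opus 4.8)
The plan is to derive everything directly from two elementary facts about $\delta$ on $\Om$: it is $1$-Lipschitz, and near points behave well under limits --- if $x_k\to x$ in $\Om$ and $y_k\in N(x_k)$, then $(y_k)$ is bounded (since $|y_k|\le\delta(x_k)+|x_k|$) and, $\prt\Om$ being closed and $\delta$ continuous, every subsequential limit of $(y_k)$ lies in $N(x)$. Fix $x\in\Om$ and write $r:=\delta(x)>0$.

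First I would show that differentiability at $x$ forces $\card N(x)=1$ and pins down the gradient. If $\delta$ is Fr\'echet differentiable at $x$ with $\nabla\delta(x)=v$, then for every $y\in N(x)$ the map $h\mapsto |x+h-y|-\delta(x+h)$ is nonnegative near $h=0$, vanishes at $h=0$, and is differentiable there (a smooth function minus a differentiable one); hence its gradient at $0$ vanishes, giving $v=\frac{x-y}{|x-y|}$. Since $|x-y|=r$ for all $y\in N(x)$, this means $y=x-rv$ is the same for every $y\in N(x)$, so $\card N(x)=1$; and if $N(x)=\{y\}$ then $\nabla\delta(x)=v=\frac{x-y}{|x-y|}$.

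Next I would prove the converse: if $N(x)=\{y\}$, then $\delta$ is differentiable at $x$. Set $e:=\frac{x-y}{r}$, a unit vector. The upper bound $\delta(x+h)\le|x+h-y|=r+e\cdot h+O(|h|^2)$ is immediate. For the matching lower bound, choose $y_h\in N(x+h)$; from $|x-y_h|\ge\delta(x)=r$ and $x-y_h=(x+h-y_h)-h$ with $|x+h-y_h|=\delta(x+h)$ one gets
\[
\delta(x+h)^2\ \ge\ r^2+2\,h\cdot(x+h-y_h)-|h|^2.
\]
By the limit property above (and uniqueness of the near point of $x$), $y_h\to y$ as $h\to0$, so $x+h-y_h\to x-y=re$ and the right-hand side equals $r^2+2r\,(e\cdot h)+o(|h|)$; taking square roots gives $\delta(x+h)\ge r+e\cdot h+o(|h|)$. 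Combined with the upper bound, $\delta(x+h)=\delta(x)+e\cdot h+o(|h|)$, i.e.\ $\delta$ is differentiable at $x$ with $\nabla\delta(x)=e=\frac{x-y}{|x-y|}$. Continuity of $\nabla\delta$ on its domain $U:=\{x\in\Om:\card N(x)=1\}$ then follows: on $U$ we have $\nabla\delta(x)=\frac{x-N(x)}{|x-N(x)|}$, and if $x_k\to x$ in $U$ the limit property forces $N(x_k)\to N(x)$, so continuity of $t\mapsto t/|t|$ on $\R^N\setminus\{0\}$ finishes it.

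The only genuinely delicate point is the lower bound in the converse: a priori it is unclear that $y_h\to y$, and this is exactly where uniqueness of the near point, together with the compactness and closedness facts, is used. A slicker route that avoids the $o(|h|)$ bookkeeping is to observe that $\delta(x)=\dist(x,\Om^c)$ on $\Om$, so $|x|^2-\delta(x)^2=\sup_{y\in\Om^c}\bigl(2x\cdot y-|y|^2\bigr)$ is convex on $\R^N$; its subdifferential at $x$ equals $\operatorname{conv}\{2y:y\in N(x)\}$, from which differentiability (singleton subdifferential $\Leftrightarrow\card N(x)=1$), the formula for $\nabla\delta$, and continuity of the gradient all follow from standard convex analysis. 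I expect the subdifferential/limit argument to be the only real obstacle; everything else is routine.
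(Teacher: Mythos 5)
The paper gives no proof of this proposition; it is quoted verbatim, with citation, from Balinsky--Evans--Lewis (\cite{balinsky1}, p.~51), so there is no ``paper proof'' to compare against. Your argument supplies a correct, self-contained proof. The forward direction is exactly right: for each $y\in N(x)$ the map $h\mapsto|x+h-y|-\delta(x+h)$ is nonnegative, vanishes at $0$, and is differentiable there once $\delta$ is assumed differentiable at $x$, so the first-order condition forces $\nabla\delta(x)=\frac{x-y}{|x-y|}$, and since $|x-y|=\delta(x)$ is common to all near points, $y$ is determined uniquely. The converse hinges precisely on the near-point limit lemma you state up front: boundedness of $(y_k)$ plus closedness of $\partial\Om$ places every subsequential limit in $N(x)$, so uniqueness of the near point gives $y_h\to y$, and the expansion $\delta(x+h)^2\ge r^2+2r\,(e\cdot h)+o(|h|)$ then squares with the trivial upper bound to yield Fr\'echet differentiability with gradient $e$. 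The convex-analysis alternative you sketch --- that $x\mapsto|x|^2-\delta^2(x)=\sup_{y\in\Om^c}\bigl(2x\cdot y-|y|^2\bigr)$ is convex with subdifferential $\operatorname{conv}\{2y:y\in N(x)\}$ --- is also correct (one localizes to a compact piece of $\Om^c$ so that the standard Danskin/Ioffe--Tikhomirov sup-rule applies), and is the more economical route: a convex function is differentiable exactly where its subdifferential is a singleton, and its gradient is automatically continuous on the set of differentiability, so continuity of $\nabla\delta$ on $\{x:\card N(x)=1\}$ comes for free instead of requiring a second pass through the near-point limit argument.
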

It is clear that $\skelet$ is exactly the set of points in $\Om$ at which $\delta$ is not differentiable. It can be shown that $\delta$ is Lipschitz continuous; hence, by Rademacher's Theorem, $\skelet$ is of zero Lebesgue measure. The following relation takes place:
$$\skelet\subseteq \ridge\subseteq \overline{\skelet}.$$
When $\Om$ is bounded, $\ridge\neq\emptyset$. When $\Om$ has a convex complement set $\overline{\Om}^c$ then $\skelet=\ridge=\emptyset$. It is an open question if $\ridge$ has zero Lebesgue measure in general. It is known that if $\Om$ is a proper open subset of $\R^2$, then the ridge has zero Lebesgue measure. It is not a closed set in general, since in \cite[pg. 10]{mantegazza} the authors propose an example of a convex domain in $\R^2$ with $C^{1,1}$-boundary for which $\overline{\skelet}$ has non-zero Lebesgue measure, so $\ridge$ is not closed in this case. Also, if $\Om$ is a bounded domain with $C^{2,1}$-boundary (see, e.g., \cite[Definition 1.3.7]{balinsky1}), then $\ridge$ has finite $(N-1)$-dimensional Hausdorff measure, hence implying that it has zero Lebesgue measure.  \\
For simplicity and clarity of exposition, we will assume in the rest of the paper that the domains under consideration satisfy $\overline\skelet=\overline{\ridge}$, which covers the most important cases of domains.\\
According to \cite[Section 2.4]{balinsky1}, if $\Om$ has a $C^2$-boundary, then $\delta\in C^2(\Om\setminus\ridge)$.

\subsection{First and second order derivatives in the smooth case}
Assume that $\prt\Om\in C^2$, which means that $\prt\Om$ is the graph of a $C^2$-function. Let $(\gamma, U)$ be a local parametrization of $\partial \Omega$ with $\gamma: U\subset \R^{N-1}\rightarrow \R^N$ such that the curves $s_i\mapsto \gamma(s_1, \ldots, s_i, \ldots s_{N-1})=\gamma (s')$ are parametrized by the arclength, i.e. 
$\|v_i:=\frac{\prt\gamma}{\prt s_i}(s')\|=1$ for any $i=1,..., N-1$, and have the following properties
\begin{enumerate}
    \item[i)] $<v_i,v_j> = \delta_{ij}$\;\; \text{and}\;\; $<\frac{\prt v_i}{\prt s_j}, v_j> =0$,\;\; $\forall i,j=1,..,N-1$,  
    \item[ii)] $<v_i,\nu> = 0$,\;\; $\forall i=1,..,N-1$,
    \item[iii)]  $\frac{\prt \nu}{\prt s_i}(s') = \kappa_i(s') \frac{\prt \gamma}{\prt s_i}(s') = \kappa_i(s')v_i(s')$,\;\; $\forall i=1,..,N-1$,
\end{enumerate}
where $\nu(s')$ is the interior normal vector at $\gamma(s')$, the coefficients $\kappa_i(s')$ are the principal curvatures of $\prt\Om$ at the point $\gamma(s')$, which are scalars given by the proportionality of the colinear vectors $v_i$ and $\frac{\prt \nu}{\prt s_i}$. \\
Consider 
\begin{equation}
    \Sigma:=\{s=(s', s_N)\in \R^N \ |\ s' \in U, \  0\leq s_N\leq \dist(\gamma(s'), \ridge)\}
\end{equation}
and the change of variables $\Gamma: \Sigma\rightarrow \Om\setminus\ridge$,
\begin{equation}
    \Gamma(s)=\gamma(s_1,...,s_{N-1})+s_N \nu(s_1,...,s_{N-1})=:x,
\end{equation}
where $\gamma(s')\in\prt\Om$ is the unique near point of $x$ on the boundary of $\Om$, $\nu(s')$ is the interior normal vector at $\gamma(s')$ pointing in the direction of $x$ and $s_N=\delta(x)$. By \cite{balinsky2}, we know that 
\begin{equation}
    \delta_{x_i}(x) = \nu^i.   \label{ec:derivate1}
\end{equation}
We differentiate again with respect to $x_j$ and we get
\begin{align}     \label{ec:deriv-delta}
    \delta_{x_ix_j}(x) \;&=\; \sum_{l=1}^N \frac{\prt \nu^i}{\prt s_l} \frac{\prt s_l}{\prt x_j}    \;=\; \sum_{l=1}^{N-1} \frac{1}{1+\delta \kappa_l} v_l^i \frac{\prt \nu^j}{\prt s_l}  + \nu^i\frac{\prt\nu^j}{\prt\delta}     \notag\\
    & =\; \sum_{l=1}^{N-1} \frac{\kappa_l}{1+\delta \kappa_l} v_l^i v_l^j.
\end{align}
We remark that, since $v_l$ has length $1$, $\delta_{x_ix_j}(x)$ is bounded when $x$ is not close to the ridge $\ridge$, i.e., $1+\delta\kappa_l >> 0$ and there exists $C_V>0$ such that
\begin{equation}    \label{deriv-delta-bound}
    \|\delta_{x_ix_j}\|_{L^\infty(\Omega\setminus V)} < C_V
\end{equation}
for any fixed neighborhood $V$ of $\ridge$, small enough to be contained in $\Omega$. This estimate is of great importance to the proof of the main theorem in the next section. We recall the following result from \cite[Section 2.4]{balinsky1}: if $\Om$ has a $C^2$-boundary, then for $g\in C^2(\R_+)$, $g(x)=g(\delta(x))$, the next formula holds:
\begin{equation}     \label{ec:s-laplacian}
    \Delta_x g(x)=\frac{\prt^2 g}{\prt\delta^2}+ \sum_{i=1}^{N-1}\frac{\kappa_i}{1+\delta\kappa_i} \frac{\prt g}{\prt\delta}.
\end{equation}

\section{Second order inequalities with the Hessian}
\begin{proof}[\textbf{Proof of Theorem \ref{thm1}}] We first give a proof for the inequality \eqref{ec:hessianineq}. By the Eikonal equation $|\nabla\delta|=1$ we have the following identity
\begin{align}
    \dvg(\nabla \delta \delta^{1-p}) & = \Delta\delta\delta^{1-p} + (1-p)\delta^{-p}\nabla\delta\cdot\nabla\delta   \notag\\
    & = \Delta\delta\delta^{1-p} + (1-p)\delta^{-p}, \quad \mathrm{x\in \Omega \setminus\ridge}.   \notag
\end{align}
Then we can write
\begin{equation}   \label{ec:dvgdelta}
    \delta^{-p} = \frac{1}{1-p} \Big[ \dvg(\nabla\delta\delta^{1-p})-\Delta\delta\delta^{1-p} \Big].
\end{equation}
Hence, for $u\in C^\infty_c(\Om\setminus\ridge)$, by \eqref{ec:dvgdelta}, integration by parts and Cauchy-Schwarz inequality we obtain 
\begin{align}
    \int_\Om \frac{\abs{\nabla u}^p}{\delta^p} & = \frac{1}{1-p} \int_\Om \Big[ \dvg(\delta^{1-p} \nabla\delta)-\Delta\delta\delta^{1-p} \Big] \abs{\nabla u}^p dx  \notag\\
    & = \frac{1}{p-1} \int_\Om \delta^{1-p} \nabla\delta\nabla(\abs{\nabla u}^p) dx + \frac{1}{p-1} \int_\Om \frac{\abs{\nabla u}^p}{\delta^{p-1}(x)} \Delta\delta dx   \notag\\
    & = \frac{p}{p-1} \int_\Om \delta^{1-p} \abs{\nabla u}^{p-2} \nabla\delta \;\hess u \;(\nabla u)^T \; dx + \frac{1}{p-1} \int_\Om \frac{\abs{\nabla u}^p}{\delta^{p-1}(x)} \Delta\delta dx  \notag\\
    & \leq \frac{p}{p-1} \int_\Om \delta^{1-p} \abs{\nabla u}^{p-1} |\;\hess u| \; dx + \frac{1}{p-1} \int_\Om \frac{\abs{\nabla u}^p}{\delta^{p-1}(x)} \Delta\delta dx.  \notag
\end{align}
By applying the Young Inequality $ab\leq \frac{1}{p}a^p+\frac{p-1}{p}b^\frac{p}{p-1}$ with $a=\frac{p}{p-1}\hess u$ and $b=\delta^{1-p}\abs{\nabla u}^{p-1}$ we get
\begin{align}
    \int_\Om \frac{\abs{\nabla u}^p}{\delta^p}& \leq \frac{1}{p}\left(\frac{p}{p-1}\right)^p \int_\Om \abs{\hess u}^p dx + \frac{p-1}{p}\int_\Om \left( \frac{\abs{\nabla u}^{p-1}}{\delta^{p-1}(x)}\abs{\nabla\delta} \right)^\frac{p}{p-1} dx + \frac{1}{p-1} \int_\Om \frac{\abs{\nabla u}^p}{\delta^{p-1}(x)} \Delta\delta dx    \notag\\
    & = \frac{1}{p}\left(\frac{p}{p-1}\right)^p \int_\Om \abs{\hess u}^p dx + \frac{p-1}{p} \int_\Om \frac{\abs{\nabla u}^p}{\delta^p} dx + \frac{1}{p-1} \int_\Om \frac{\abs{\nabla u}^p}{\delta^{p-1}(x)} \Delta\delta dx.    \notag
\end{align}
Rearranging the terms, we obtain:
\begin{equation}   
        \int_\Om \abs{\hess u}^p dx \geq \left(\frac{p-1}{p} \right)^p \int_\Om \frac{\abs{\nabla u}
        ^p}{\delta^p(x)} dx + \left(\frac{p-1}{p} \right)^{p-1} \int_\Om \frac{\abs{\nabla u}
        ^p}{\delta^{p-1}(x)} (-\Delta\delta) dx,    \notag
\end{equation}
which finishes the proof of \eqref{ec:hessianineq}.  \\
If we assume that $-\Delta\delta\geq 0$ in $\Om\setminus\ridge$, then we restrict the inequality to the following form
\begin{equation}
    \int_\Om \abs{\hess u}^p dx \geq \left(\frac{p-1}{p} \right)^p \int_\Om \frac{\abs{\nabla u}^p}{\delta^p(x)} dx.   \notag
\end{equation}
In order to prove the optimality of the constant in \eqref{ec:hessianineq2} we need to find a sequence $(u_\eps)_\eps$ in $C^\infty_c(\Om\setminus\ridge)$ such that 
\begin{equation}
    \frac{\int_\Om \abs{\hess u_\eps}^p dx}{\int_\Om \frac{\abs{\nabla u_\eps}^p}{\delta^p(x)} dx}  \;\;\stackrel{\eps\to 0}{\longrightarrow}\;\; \Big(\frac{p-1}{p} \Big)^p.    \label{quotient}
\end{equation}
We have to design a minimizing sequence that vanishes as we approach the ridge set and also when we approach the boundary, so in $C^\infty_c(\Om\setminus\ridge)$. As we shall see further, the behavior of this sequence near the boundary is extremely important for the sharpness of the constant. \\

Let $r,\eps>0$ sufficiently small, say $r<\frac{\dist(\ridge,\prt\Om)}{8}$ and $\eps<\min\left\{1,\frac{\dist(\ridge,\prt\Om)}{8}\right\}$. Define the set $V_{r}:=\big\{ x\in\Om \;|\; \dist(x,\ridge)<r \big\} $. By the choice of $r$, $V_r$ and $V_{2r}$ are both subsets of $\Om$. Consider $\theta\in C^\infty(\R)$ such that 
\begin{equation}
\theta(\xi) = \begin{cases}
    0, \;&\;\; \text{if}\; \xi<\frac{1}{4};  \\
    1, \;&\;\; \text{if}\; \xi>\frac{1}{2}.   \notag
\end{cases}
\end{equation}
Inspired by \cite{cassano2}, we consider the following smooth cut-off function, $0\leq \theta_\eps\leq 1$:
\begin{equation}
\theta_\eps(x) = \begin{cases}
    0, \;&\;\; \text{if}\;\; \delta(x)<\eps^2 \;\;\text{or}\;\; x\in V_r,  \\
    \theta\left(\frac{\ln\frac{\delta(x)}{\eps^2}}{\ln\frac{1}{\eps}}\right), \;&\;\; \text{if}\;  \eps^2\leq \delta(x)\leq\eps,   \\
    1, \;&\;\; \text{if}\;\; \delta(x)>\eps \;\;\text{and}\;\; x\in \Om\setminus V_{2r}.
\end{cases}
\end{equation}
such that $\theta_\eps$ does not depend on $\eps$ on $V_{2r}\setminus V_r$. Denote by $x_\eps:=\frac{\ln\frac{\delta(x)}{\eps^2}}{\ln\frac{1}{\eps}}$. We define, for any $x\in\Om\setminus\ridge$,
$$u_\eps(x)= \delta^{\frac{2p-1}{p}+\eps}(x)\theta_\eps(x).$$
Note that $u_\eps\in C^\infty_c(\Om\setminus V_r) \subset C^\infty_c(\Om\setminus\ridge)$. We compute its derivatives for $x\in\Om\setminus\ridge$:
\begin{align}   
    u_{\eps,x_i}(x) & = \left(\frac{2p-1}{p}+\eps\right)\delta^{\frac{p-1}{p}+\eps}(x)\delta_{x_i}(x)\theta_\eps(x) + \delta^{\frac{2p-1}{p}+\eps}(x)\theta_{\eps,x_i}(x),   \notag\\
    u_{\eps,x_ix_j}(x) & = \left(\frac{2p-1}{p}+\eps\right)\left(\frac{p-1}{p}+\eps\right)\delta^{-\frac{1}{p}+\eps}(x) \delta_{x_i}(x)\delta_{x_j}(x)\theta_\eps(x)   \notag\\
    &+ \left(\frac{2p-1}{p}+\eps\right)\delta^{\frac{p-1}{p}+\eps}(x)\delta_{x_ix_j}(x)\theta_\eps(x) + \left(\frac{2p-1}{p}+\eps\right) \delta^{\frac{p-1}{p}+\eps}(x)\delta_{x_i}(x)\theta_{\eps,x_j}(x)  \notag\\
     & + \left(\frac{2p-1}{p}+\eps\right)\delta^{\frac{p-1}{p}+\eps}(x)\delta_{x_j}(x)\theta_{\eps,x_i}(x) + \delta^{\frac{2p-1}{p}+\eps}(x)\theta_{\eps,x_ix_j}(x).   \notag
\end{align}
We need to investigate the behavior of the derivatives of $\theta_\eps$ as $\eps\to 0$. From now on, we shall use the symbols ”$\lesssim$” and "$\gtrsim$", meaning that the inequality we state holds up to some universal constants, usually denoted by $C$, depending on the $L^\infty-$norm of $\theta$, $\delta$ and their derivatives in the respective region. First, for $x\in V_{2r}\setminus V_r$, since $\theta$ is independent of $\eps$, there exist constants $C_r$, $C_r'$ and $C_r''$ (i.e., depending on $r$) such that
\begin{align}     
    \|&\delta(x)\|_{L^\infty(V_{2r}\setminus V_r)} \;,\;\; \left\|\frac{1}{\delta(x)}\right\|_{L^\infty(V_{2r}\setminus V_r)} \leq C_r    \label{delta-bound} \\
    \|&D\delta\|_{L^\infty(V_{2r}\setminus V_r)}\;,\;\; \|D^2\delta\|_{L^\infty(V_{2r}\setminus V_r)}\;\; \leq \; C_r'   \label{deriv-delta-bound} \\
    \|&\theta_\eps\|_{L^\infty(V_{2r}\setminus V_r)}\;,\;\; 
    \|D\theta_\eps\|_{L^\infty(V_{2r}\setminus V_r)}\;,\;\; \|D^2\theta_\eps\|_{L^\infty(V_{2r}\setminus V_r)}\;\; \leq \; C_r''.   \label{deriv-theta-bound}
\end{align}
On the other hand, on $\{x\in\Omega\;|\;\eps^2\leq \delta(x)\leq\eps\} \stackrel{not}{=} \{\eps^2\leq \delta(x)\leq\eps\}$, we have 
\begin{align}    
    \theta_{\eps,x_i}(x) & = \frac{1}{\ln{(\frac{1}{\eps})}} \frac{\theta_{x_i}(x_\eps)\delta_{x_i}(x)}{\delta(x)},    \notag\\
    \theta_{\eps,x_ix_j}(x)& = \frac{1}{\ln{(\frac{1}{\eps})}} \Bigg[ \frac{1}{\ln{(\frac{1}{\eps})}}\frac{\theta_{x_ix_j}(x_\eps)\delta_{x_i}(x)\delta_{x_j}(x)}{\delta(x)} 
    + \frac{\theta_{x_i}(x_\eps)\delta_{x_ix_j}(x)}{\delta(x)} 
    - \frac{\theta_{x_i}(x_\eps)\delta_{x_i}(x)\delta_{x_j}(x)}{\delta^2(x)}  \Bigg].      \notag
\end{align}
Consequently, we have the following estimates in $\eps$:
\begin{align}
    \left|\theta_{\eps,x_i} \right| & \lesssim \frac{\|D\theta\|_{L^\infty(\Om)}}{\delta(x)}\;  \frac{1}{\ln{(\frac{1}{\eps})}}     \label{deriv-1-thetaeps}\\
    \left|\theta_{\eps,x_ix_j} \right| & \lesssim \Bigg[ \frac{1}{\ln^2{(\frac{1}{\eps})}}  \frac{\|D^2\theta\|_{L^\infty(\Om)}}{\delta(x)}  
    + \frac{1}{\ln{(\frac{1}{\eps})}} \frac{\|D\theta\|_{L^\infty(\Om)} \|\delta_{x_ix_j}\|_{L^\infty(\Om\setminus V_r)}}{\delta(x)} 
    + \frac{1}{\ln{(\frac{1}{\eps})}} \frac{\|D\theta\|_{L^\infty(\Om)}}{\delta^2(x)} \Bigg].    \label{deriv-2-thetaeps}
\end{align}

Let us now compute the integrals in \eqref{quotient}. The integral in the denominator is
\begin{align}
    \int_\Om \frac{\abs{\nabla u_\eps}^p}{\delta^p(x)} dx 
    & = \int_\Om \frac{\abs{ \left(\frac{2p-1}{p}+\eps\right)\delta^{\frac{p-1}{p}+\eps}(x)\theta_\eps(x) \nabla\delta  + \delta^{\frac{2p-1}{p}+\eps}(x)\nabla\theta_\eps }^p}{\delta^p(x)} dx    \notag\\
    & = \left(\frac{2p-1}{p}+\eps\right)^p \int_\Om \delta^{-1+p\eps}(x)\theta_\eps^p(x) dx + I_{1,\eps},  \label{ec:i1}
\end{align}
where
\begin{equation}
    I_{1,\eps} =\int_\Om \abs{ \left(\frac{2p-1}{p}+\eps\right)\delta^{-\frac{1}{p}+\eps}\theta_\eps \nabla\delta  + \delta^{\frac{p-1}{p}+\eps}\nabla\theta_\eps }^p dx -  \left(\frac{2p-1}{p}+\eps\right)^p \int_\Om \delta^{-1+p\eps}\theta_\eps^p dx.   \notag
\end{equation}
The integral in the numerator is
\begin{align}
    \int_\Om \abs{\hess u_\eps}^p dx 
    & = \int_\Om \Big( \sum_{i,j} u_{\eps,x_ix_j}^2(x) \Big)^\frac{p}{2} dx    \notag\\
    & = \left(\frac{2p-1}{p} + \eps\right)^p \left(\frac{p-1}{p} + \eps \right)^p \int_\Om \delta^{-1+p\eps}(x) \theta_\eps^p(x)  dx + I_{2,\eps}  \label{ec:i2}
\end{align}
where 
\begin{equation}
    I_{2,\eps} = \int_\Om  \Big( \sum_{i,j} u_{\eps,x_ix_j}^2(x) \Big)^\frac{p}{2} dx - \left(\frac{2p-1}{p} + \eps\right)^p \left(\frac{p-1}{p} + \eps\right)^p \int_\Om \delta^{-1+p\eps}(x) \theta_\eps^p(x)  dx,  \notag
\end{equation}
First, let us justify that the first terms in \eqref{ec:i1} and \eqref{ec:i2} tend to $\infty$ as $\eps\to 0$. In the following, we denote 
$$M_1(r)=\min_{x\in\prt V_{r}} \delta(x) \;\;\text{and}\;\; M_2(r)=\min_{x\in\prt V_{2r}} \delta(x).$$
Recall that $\theta_\eps=0$ on $V_r$ and on $\{x\in\Om\;|\;\delta(x)<\eps^2\}\stackrel{not}{=}\{\delta(x)<\eps^2\}$. Using this, we obtain 
\begin{align}
    \int_\Om \delta^{-1+p\eps}(x)\theta_\eps^p(x) \;dx 
    & \;\;=\; \int_{(\Om\setminus V_r)\cap\{\delta(x) >\eps^2\}} \delta^{-1+p\eps}(x) \theta_\eps^p(x) \;dx \;\geq\;  \int_{(\Om\setminus V_{2r})\cap\{\delta(x) >\eps\}} \delta^{-1+p\eps}(x) \;dx.  \label{int-infinity}
\end{align}
Let $\Sigma_s=\{x\in\Om \;|\; \delta(x)=s \}$. If $|\Sigma_s|$ denotes the $(N-1)-$dimensional Hausdorff measure of $\Sigma_s$ then the function $s\to|\Sigma_s|$ is continuous, bounded and there exists $C_\Sigma>0$ such that $|\Sigma_s|\geq C_\Sigma>0$ for any $\eps<s<M_2(r)$. Using the co-area formula, we get
\begin{align}
    \int_{(\Om\setminus V_{2r})\cap\{\delta(x) >\eps\}} \delta^{-1+p\eps}(x) \;dx 
& \;\geq\; \int_\eps^{M_2(r)} \int_{\Sigma_s} s^{-1+p\eps} d\sigma ds  \;=\; \int_\eps^{M_2(r)} s^{-1+p\eps} \abs{\Sigma_s} ds   \notag\\
    & \gtrsim\; \int_\eps^{M_2(r)} s^{-1+p\eps} ds  \;=\;  \frac{(M_2(r))^{p\eps}-\eps^{p\eps}}{p\eps} \;\; \stackrel{ \eps\to 0}{\longrightarrow} \infty.    \label{ec:convergence1}
\end{align} 
Thus, the first terms in \eqref{ec:i1} and \eqref{ec:i2} tend to infinity. We now estimate $I_{1,\eps}$ and $I_{2,\eps}$ and show that they are bounded as $\eps$ approaches $0$. Recall that
\begin{align}
    I_{1,\eps} = \int_\Om \abs{ \left(\frac{2p-1}{p} + \eps\right)\delta^{\frac{-1}{p}+\eps}(x)\theta_\eps(x) \nabla\delta  + \delta^{\frac{p-1}{p}+\eps}(x)\nabla\theta_\eps}^p - \left(\frac{2p-1}{p}+\eps\right)^p \delta^{-1+p\eps}(x)\theta_\eps^p(x) dx.   \notag
\end{align}
Notice that in $I_{1,\eps}$ we integrate only on the support of $\theta_\eps$, hence 
$I_{1,\eps}$ can be divided in three integrals, on three distinct regions, as follows:
\begin{align}
    T_{1,\eps} &:= \int_{V_{2r}\setminus V_r} \abs{ \left(\frac{2p-1}{p} + \eps\right)\delta^{\frac{-1}{p}+\eps}(x)\theta_\eps(x) \nabla\delta  + \delta^{\frac{p-1}{p}+\eps}(x)\nabla\theta_\eps}^p   \notag\\
    & \quad\quad\quad\quad\quad\quad\quad\quad\quad\quad\quad\quad\quad\quad\quad\quad\quad\quad\quad\quad- \left(\frac{2p-1}{p}+\eps\right)^p \delta^{-1+p\eps}(x)\theta_\eps^p(x) dx      \notag\\
    T_{2,\eps} &:= \int_{\{\delta(x)>\eps\}\setminus V_{2r}} \abs{ \left(\frac{2p-1}{p} + \eps\right)\delta^{\frac{-1}{p}+\eps}(x)\theta_\eps(x) \nabla\delta  + \delta^{\frac{p-1}{p}+\eps}(x)\nabla\theta_\eps}^p   \notag\\
    & \quad\quad\quad\quad\quad\quad\quad\quad\quad\quad\quad\quad\quad\quad\quad\quad\quad\quad\quad\quad- \left(\frac{2p-1}{p}+\eps\right)^p \delta^{-1+p\eps}(x)\theta_\eps^p(x) dx     \notag\\
    T_{3,\eps} &:=  \int_{\{\eps^2\leq \delta(x)\leq \eps\}} \abs{ \left(\frac{2p-1}{p} + \eps\right)\delta^{\frac{-1}{p}+\eps}(x)\theta_\eps(x) \nabla\delta  + \delta^{\frac{p-1}{p}+\eps}(x)\nabla\theta_\eps}^p   \notag\\
    & \quad\quad\quad\quad\quad\quad\quad\quad\quad\quad\quad\quad\quad\quad\quad\quad\quad\quad\quad\quad- \left(\frac{2p-1}{p}+\eps\right)^p \delta^{-1+p\eps}(x)\theta_\eps^p(x) dx.    \notag
\end{align}
Taking into account \eqref{delta-bound}, \eqref{deriv-delta-bound} and \eqref{deriv-theta-bound}, we may easily conclude that $T_{1,\eps}$ is uniformly bounded when $\eps\to 0$. \\
Since $\theta_\eps(x)=1$ for $x\in \{\delta(x)>\eps\}\setminus V_{2r}$, then $\nabla\theta_\eps$ vanishes on this set, so $T_{2,\eps}=0$. For $x\in\{\eps^2\leq \delta(x)\leq\eps\}$:
\begin{equation}     \label{deriv-theta-eps}
\theta_\eps(x)\leq 1 \;\;\text{and}\;\; \nabla\theta_\eps=\frac{1}{\ln\left(\frac{1}{\eps}\right)} \frac{\nabla\theta\nabla\delta}{\delta}.
\end{equation}
For $p\geq 2$, using \eqref{ec:ineqshafrir1} for
\begin{equation}
    a= \Big(\frac{2p-1}{p}\Big)\delta^{\frac{-1}{p}+\eps}(x)\theta_\eps(x) \nabla\delta  \;\;\text{and}\;\; b= \delta^{\frac{p-1}{p}+\eps}(x)\nabla\theta_\eps,   \notag
\end{equation}
and applying Cauchy-Schwarz inequality and \eqref{ineqshafrir4}, we get 
\begin{align}
    T_{3,\eps} &\lesssim \int_{\{\eps^2\leq \delta(x)\leq \eps\}}\Big(\abs{a}+\abs{b}\Big)^{p-2} \abs{b}^2 dx + \int_{\{\eps^2\leq \delta(x)\leq \eps\}}\abs{a}^{p-2}a\cdot b \;dx  \notag\\
    & \lesssim \int_{\{\eps^2\leq \delta(x)\leq \eps\}}\big(\abs{a}^{p-2}+\abs{b}^{p-2}\big) \abs{b}^2 dx + \int_{\{\eps^2\leq \delta(x)\leq \eps\}}\abs{a}^{p-1} \abs{b} dx   \notag\\
    & \lesssim  \int_{\{\eps^2\leq \delta(x)\leq \eps\}}\abs{a}^{p-2}\abs{b}^2 dx + \int_{\{\eps^2\leq \delta(x)\leq \eps\}}\abs{a}^{p-1}\abs{b} dx + \int_{\{\eps^2\leq \delta(x)\leq \eps\}}\abs{b}^p dx.    \notag
\end{align}
By \eqref{deriv-1-thetaeps} and \eqref{deriv-theta-eps},
\begin{align}    \label{T-3eps}
    T_{3,\eps} & \lesssim \int_{\{\eps^2\leq \delta(x)\leq \eps\}} \delta^{1+p\eps}(x)|\nabla\theta_\eps|^2 dx + \int_{\{\eps^2\leq \delta(x)\leq \eps\}} \delta^{p\eps}(x)|\nabla\theta_\eps| dx + \int_{\{\eps^2\leq \delta(x)\leq \eps\}} \delta^{p-1+p\eps}(x)\abs{\nabla\theta_\eps}^p dx    \notag\\
    & \lesssim \int_{\{\eps^2\leq \delta(x)\leq \eps\}}\delta^{-1+p\eps}(x) \left[ \frac{1}{\ln^2\left(\frac{1}{\eps}\right)} + \frac{1}{\ln\left(\frac{1}{\eps}\right)} + \frac{1}{\left|\ln\left(\frac{1}{\eps}\right) \right|^p} \right] dx    \notag\\
    & \lesssim \frac{1}{\ln\left(\frac{1}{\eps}\right)}\int_{\{\eps^2\leq \delta(x)\leq \eps\}}\delta^{-1+p\eps}(x) dx.   
\end{align}
By the co-area formula, we get
\begin{align}
    T_{3,\eps} &\lesssim \frac{1}{\ln\left(\frac{1}{\eps}\right)} \int_{\eps^2}^\eps s^{-1+p\eps} |\Sigma_s| ds \lesssim \frac{1}{\ln\left(\frac{1}{\eps}\right)} \frac{\eps^{p\eps}-\eps^{2p\eps}}{p\eps}.
\end{align}
Since 
\begin{equation}
    \frac{\eps^{p\eps}-\eps^{2p\eps}}{p\eps} \simeq -\ln\eps, \;\;\;\; \text{when}\;\;\eps\to 0,    \label{limita-eps}
\end{equation}
we conclude that $T_{3,\eps}$ is bounded for $p\geq 2$.
In the case when $1<p<2$, using the same notation for $a$ and $b$, one uses \eqref{ec:ineqshafrir2} and \eqref{deriv-theta-eps} to get the estimates:
\begin{align}
     T_{3,\eps} &\lesssim \int_{\{\eps^2\leq \delta(x)\leq \eps\}} \abs{a}^{p-2}a\cdot b dx + \int_{\{\eps^2\leq \delta(x)\leq \eps\}} \abs{b}^p dx  \notag\\
    & \lesssim \int_{\{\eps^2\leq \delta(x)\leq \eps\}} \abs{a}^{p-1} \abs{b} dx + \int_{\{\eps^2\leq \delta(x)\leq \eps\}} \abs{b}^p dx \notag\\
    & = \int_{\{\eps^2\leq \delta(x)\leq \eps\}} \delta^{p\eps}(x) \abs{\nabla\theta_\eps} dx + \int_{\{\eps^2\leq \delta(x)\leq \eps\}} \delta^{p-1+p\eps}(x)\abs{\nabla\theta_\eps}^p dx.    \notag
\end{align}
These are again bounded, as seen before in \eqref{T-3eps}. We conclude that $I_{1,\eps}$ is bounded. \\
We now turn our attention to $I_{2,\eps}$. Recall that
\begin{equation}
    I_{2,\eps} = \int_\Om  \Big( \sum_{i,j} u_{\eps,x_ix_j}^2(x) \Big)^\frac{p}{2} - \left(\frac{2p-1}{p} + \eps\right)^p \left(\frac{p-1}{p} + \eps\right)^p \delta^{-1+p\eps}(x) \theta_\eps^p(x)  dx.   \notag
\end{equation}
First, let us start with some estimates on $u_{\eps,x_ix_j}^2$ in $\Om\setminus\ridge$:
\begin{align}
    &u_{\eps,x_ix_j}^2(x) \leq \left( \frac{2p-1}{p}+\eps \right)^2\left( \frac{p-1}{p}+\eps \right)^2 \theta_\eps^2(x)\delta^{-\frac{2}{p}+2\eps}(x)\delta_{x_i}^2(x)\delta_{x_j}^2(x) + \delta^{\frac{4p-2}{p}+2\eps}(x)\theta_{\eps,x_ix_j}^2(x) \notag\\
    &+ C\Bigg( \delta^{\frac{2p-2}{p}+2\eps}(x) \Big[ \theta_\eps^2(x)\delta_{x_ix_j}^2(x) + \theta_{\eps,x_j}^2(x)\delta_{x_i}^2(x) + \theta_{\eps,x_i}^2(x)\delta_{x_j}^2(x) + \theta_\eps(x)\theta_{\eps,x_ix_j}(x) \delta_{x_i}(x)\delta_{x_j}(x)   \notag\\
    & + \theta_\eps(x)\theta_{\eps,x_j}(x)\delta_{x_ix_j}(x)\delta_{x_i}(x) + \theta_\eps(x)\theta_{\eps,x_i}(x)\delta_{x_ix_j}(x)\delta_{x_j}(x) + \theta_{\eps(x), x_i}(x)\theta_{\eps,x_j}(x)\delta_{x_i}(x)\delta_{x_j}(x) \Big]   \notag\\
    & + \delta^{\frac{p-2}{p}+2\eps}(x) \Big[ \theta_\eps^2(x) \delta_{x_ix_j}(x)\delta_{x_i}(x)\delta_{x_j}(x) + \theta_\eps(x)\theta_{\eps,x_j}(x)\delta_{x_i}^2(x)\delta_{x_j}(x) + \theta_\eps(x)\theta_{\eps,x_i}(x)\delta_{x_j}^2(x)\delta_{x_i}(x) \Big]      \notag\\
    & + \delta^{\frac{3p-2}{p}+2\eps}(x) \Big[ \theta_\eps(x)\theta_{\eps,x_ix_j}(x)\delta_{x_ix_j}(x) + \theta_{\eps,x_ix_j}(x)\theta_{\eps,x_j}(x)\delta_{x_i}(x) + \theta_{\eps,x_ix_j}(x)\theta_{\eps,x_i}(x)\delta_{x_j}(x) \Big] \Bigg).    \label{ueps^2}
\end{align}
As in the case of $I_{1,\eps}$, notice that in $I_{2,\eps}$ we integrate only on the support of $\theta_\eps$, hence $I_{2,\eps}$ can be divided into three integrals on distinct regions, as follows:
\begin{align}
    J_{1,\eps} & = \int_{V_{2r}\setminus V_r}  \Big( \sum_{i,j} u_{\eps,x_ix_j}^2(x) \Big)^\frac{p}{2} - \left(\frac{2p-1}{p} + \eps\right)^p \left(\frac{p-1}{p} + \eps\right)^p \delta^{-1+p\eps}(x) \theta_\eps^p(x)  dx,    \label{J1eps}\\
    J_{2,\eps} & = \int_{\{\delta(x)>\eps\}\setminus V_{2r}}  \Big( \sum_{i,j} u_{\eps,x_ix_j}^2(x) \Big)^\frac{p}{2} - \left(\frac{2p-1}{p} + \eps\right)^p \left(\frac{p-1}{p} + \eps\right)^p \delta^{-1+p\eps}(x) \theta_\eps^p(x)  dx,     \label{J2eps}\\
    J_{3,\eps} & = \int_{\{\eps^2\leq \delta(x)\leq \eps\}}  \Big( \sum_{i,j} u_{\eps,x_ix_j}^2(x) \Big)^\frac{p}{2} - \left(\frac{2p-1}{p} + \eps\right)^p \left(\frac{p-1}{p} + \eps\right)^p \delta^{-1+p\eps}(x) \theta_\eps^p(x)  dx.      \label{J3eps}
\end{align}
By \eqref{delta-bound}, \eqref{deriv-delta-bound} and \eqref{deriv-theta-bound}, we deduce that $J_{1,\eps}$ is uniformly bounded as $\eps\to 0$. \\
Recall that $\theta_\eps(x)=1$ on $\{\delta(x)>\eps\}\setminus V_{2r}$, so restricting to this set in \eqref{ueps^2} we have
\begin{align}
     u_{\eps,x_ix_j}^2(x)  \leq \left( \frac{2p-1}{p}+\eps \right)^2\left( \frac{p-1}{p}+\eps \right)^2 \delta^{-\frac{2}{p}+2\eps}(x)\delta_{x_i}^2(x)\delta_{x_j}^2(x) &+ C \;\Big( \delta^{\frac{2p-2}{p}+2\eps}(x)\delta_{x_ix_j}^2(x)  \notag\\
     &  +  \delta^{\frac{p-2}{p}+2\eps}(x) \delta_{x_ix_j}(x)\delta_{x_i}(x)\delta_{x_j}(x) \Big).    \notag
\end{align}
Summing over $i,j$ we get
\begin{equation}
    \sum_{i,j=1}^N u_{\eps,x_ix_j}^2(x) \lesssim  \left( \frac{2p-1}{p}+\eps \right)^2\left( \frac{p-1}{p}+\eps \right)^2 \delta^{-\frac{2}{p}+2\eps}(x) + C\;\delta^{\frac{p-2}{p}+2\eps}(x),
\end{equation}
where $C=C(p, \|\delta\|_{L^\infty(\Om\setminus V_{2r})},\;\|D\delta\|_{L^\infty(\Om\setminus V_{2r})},\;\|D^2\delta\|_{L^\infty(\Om\setminus V_{2r})})$. Thus,
\begin{align}
    J_{2,\eps} \lesssim \int_{\{\delta(x)>\eps\}\setminus V_{2r}} \Bigg[\left( \frac{2p-1}{p}+\eps \right)^2\left( \frac{p-1}{p}+\eps \right)^2  & \delta^{-\frac{2}{p}+2\eps}(x) + C\;\delta^{\frac{p-2}{p}+2\eps}(x)\Bigg]^\frac{p}{2}    \notag\\  
    &-\left(\frac{2p-1}{p} + \eps\right)^p \left(\frac{p-1}{p} + \eps\right)^p \delta^{-1+p\eps}(x)  dx.
\end{align}
For $p\geq 4$, hence $\frac{p}{2}\geq 2$, using \eqref{ec:ineqshafrir1} for
$$a=\left( \frac{2p-1}{p}+\eps \right)^2\left( \frac{p-1}{p}+\eps \right)^2 \delta^{-\frac{2}{p}+2\eps}(x) \;\;\text{and}\;\; b=C\delta^{\frac{p-2}{p}+2\eps}(x),$$
we obtain, similar as in $T_{3,\eps}$, that
\begin{align}
    J_{2,\eps} &  \lesssim  \int_{\{\delta(x)>\eps\}\setminus V_{2r}} \abs{a}^{\frac{p}{2}-2}\abs{b}^2 dx + \int_{\{\delta(x)>\eps\}\setminus V_{2r}} \abs{a}^{\frac{p}{2}-1}\abs{b} dx + \int_{\{\delta(x)>\eps\}\setminus V_{2r}} \abs{b}^\frac{p}{2} dx    \notag\\
    & \lesssim  \int_{\{\delta(x)>\eps\}\setminus V_{2r}} \delta^{1+p\eps}(x) dx +  \int_{\{\delta(x)>\eps\}\setminus V_{2r}} \delta^{p\eps}(x) dx + \int_{\{\delta(x)>\eps\}\setminus V_{2r}} \delta^{\frac{p-2}{2}+p\eps}(x) dx    \notag\\
    & = \int_{\{\delta(x)>\eps\}\setminus V_{2r}} \delta^{p\eps}(x) \left[1 + \delta(x) + \delta^\frac{p}{2}(x)\right] dx   \notag\\
    & \lesssim \;\left|\{\delta(x)>\eps\}\setminus V_{2r}\right| < |\Om|,   \label{J2eps}
\end{align}
since $\delta$ is bounded in $\Om$, so $J_{2,\eps}$ is uniformly bounded for $p\geq 4$. When $2<p<4$, then $1<\frac{p}{2}<2$, so we apply \eqref{ec:ineqshafrir2} for the same choice of $a$ and $b$ and, similarly as in $T_{3,\eps}$, get
\begin{align}
     J_{2,\eps} &  \lesssim \int_{\{\delta(x)>\eps\}\setminus V_{2r}} \abs{a}^{\frac{p}{2}-1}\abs{b} dx + \int_{\{\delta(x)>\eps\}\setminus V_{2r}} \abs{b}^\frac{p}{2} dx,    \notag
\end{align}
which is uniformly bounded in view of \eqref{J2eps}. Similarly, for $1<p\leq 2$,\; $\frac{1}{2}<\frac{p}{2}\leq 1$, we apply \eqref{ec:ineqshafrir3} and we get
\begin{align}
    J_{2,\eps} &  \lesssim \int_{\{\delta(x)>\eps\}\setminus V_{2r}} \abs{b}^\frac{p}{2} dx,
\end{align}
which is again uniformly bounded. Therefore, $J_{2,\eps}$ is bounded for any $1<p<\infty$.\\
In order to justify the boundedness of $J_{3,\eps}$, we first estimate the sum of the terms $u_{\eps,x_ix_j}^2$ in $\{\eps^2\leq\delta(x)\leq\eps\}$. Using \eqref{deriv-1-thetaeps}, \eqref{deriv-2-thetaeps}, \eqref{deriv-theta-eps} and \eqref{ueps^2}, we have
\begin{align}
    \sum_{i,j=1}^N u_{\eps,x_ix_j}^2(x) & \lesssim \left( \frac{2p-1}{p}+\eps \right)^2\left( \frac{p-1}{p}+\eps \right)^2 \delta^{-\frac{2}{p}+2\eps}(x) + \delta^{\frac{4p-2}{p}+2\eps} \left(\frac{1}{\ln\left(\frac{1}{\eps}\right)} \frac{1}{\delta^2(x)} \right)^2    \notag\\
    & + \delta^{\frac{2p-2}{p}+2\eps} \left[ 1 + \left(\frac{1}{\ln\left(\frac{1}{\eps}\right)} \frac{1}{\delta(x)} \right)^2 + \left(\frac{1}{\ln\left(\frac{1}{\eps}\right)} \frac{1}{\delta^2(x)} \right) + \frac{1}{\ln\left(\frac{1}{\eps}\right)} \frac{1}{\delta(x)}  \right]    \notag\\
    & + \delta^{\frac{p-2}{2}+2\eps} \left[ 1 + \frac{1}{\ln\left(\frac{1}{\eps}\right)} \frac{1}{\delta(x)} \right] + \delta^{\frac{3p-2}{p}+2\eps} \left[ \frac{1}{\ln\left(\frac{1}{\eps}\right)} \frac{1}{\delta^2(x)} + \frac{1}{\ln^2\left(\frac{1}{\eps}\right)} \frac{1}{\delta^3(x)} \right]   \notag\\
    & \lesssim \left( \frac{2p-1}{p}+\eps \right)^2\left( \frac{p-1}{p}+\eps \right)^2 \delta^{-\frac{2}{p}+2\eps}(x) + \delta^{-\frac{2}{p}+2\eps} \left[\frac{1}{\ln\left(\frac{1}{\eps}\right)} + \frac{1}{\ln^2\left(\frac{1}{\eps}\right)} \right]    \notag\\
    & \lesssim \left( \frac{2p-1}{p}+\eps \right)^2\left( \frac{p-1}{p}+\eps \right)^2 \delta^{-\frac{2}{p}+2\eps}(x) + \delta^{-\frac{2}{p}+2\eps} \frac{1}{\ln\left(\frac{1}{\eps}\right)}.
\end{align}
Therefore, 
\begin{align}
    J_{3,\eps} & \lesssim \int_{\{\eps^2\leq\delta(x)\leq \eps\}} \Bigg| \left( \frac{2p-1}{p}+\eps \right)^2\left( \frac{p-1}{p}+\eps \right)^2 \delta^{-\frac{2}{p}+2\eps}(x) + \delta^{-\frac{2}{p}+2\eps}(x) \frac{1}{\ln\left(\frac{1}{\eps}\right)} \Bigg|^\frac{p}{2} \notag\\  &\quad\quad\quad\quad\quad\quad\quad\quad\quad\quad\quad\quad\quad\quad\quad\quad\quad\quad\quad\quad-\left(\frac{2p-1}{p} + \eps\right)^p \left(\frac{p-1}{p} + \eps\right)^p \delta^{-1+p\eps}(x)  dx   \notag\\
    & = \int_{\{\eps^2\leq\delta(x)\leq \eps\}} \delta^{-1+p\eps} (x) \Bigg[ \Bigg| \left( \frac{2p-1}{p}+\eps \right)^2\left( \frac{p-1}{p}+\eps \right)^2 + \frac{1}{\ln\left(\frac{1}{\eps}\right)} \Bigg|^\frac{p}{2}   \notag\\
    &\quad\quad\quad\quad\quad\quad\quad\quad\quad\quad\quad\quad\quad\quad\quad\quad\quad\quad\quad\quad - \left(\frac{2p-1}{p} + \eps\right)^p \left(\frac{p-1}{p} + \eps\right)^p \Bigg] dx. \notag
\end{align}
Using again \eqref{ec:ineqshafrir1}, \eqref{ec:ineqshafrir2}, \eqref{ec:ineqshafrir3} for $p\geq 4$, $2<p<4$ and $1<p\leq 2$ respectively, applied for
\begin{equation}
    a=\left(\frac{2p-1}{p} + \eps\right)^p \left(\frac{p-1}{p} + \eps\right)^p \;\;\text{and}\;\; b= \frac{1}{\ln\left(\frac{1}{\eps}\right)}
\end{equation}
similar as for $J_{2,\eps}$, we get that 
\begin{align}
    J_{3,\eps} & \lesssim \frac{1}{\ln\left(\frac{1}{\eps}\right)}\int_{\{\eps^2\leq\delta(x)\leq \eps\}} \delta^{-1+p\eps}(x) dx,
\end{align}
which is bounded for any $1<p<\infty$, by \eqref{limita-eps}. Finally, since $I_{1,\eps},I_{2,\eps}$ are bounded as $\eps$ goes to $0$, in view of \eqref{int-infinity} and \eqref{ec:convergence1}, we have that
\begin{align}
      \frac{\int_\Om \abs{\hess u_\eps}^p dx}{\int_\Om \frac{\abs{\nabla u_\eps}^p}{\delta^p(x)} dx}  = \frac{\left(\frac{2p-1}{p} + \eps\right)^p \left(\frac{p-1}{p} + \eps\right)^p \int_\Om \delta^{-1+p\eps} \theta_\eps^p dx + I_{1,\eps}}{\left(\frac{2p-1}{p} + \eps\right)^p \int_\Om \delta^{-1+p\eps}\theta_\eps^p dx + I_{2,\eps}}    \;\;\stackrel{\eps\to 0}{\longrightarrow} \;\;\left(\frac{p-1}{p} \right)^p,
\end{align}
which proves the optimality of the constant in \eqref{ec:hessianineq2} when $-\Delta\delta\geq 0$ in $\Om\setminus\ridge$. The proof of the theorem is now finished.
\end{proof}

\begin{remark} 
    The same minimizing sequence used in the previous proof might be used to justify the first inequality of \eqref{constCZ}. The computations are carried in the same manner, the difference being that we restrict to the Laplacian instead of the Hessian of $u_\eps$, which makes it easier to estimate the integrals containing second order derivatives.
\end{remark}

\begin{proof}[Proof of Theorem \ref{thmExtension}]
The computations are carried out as in the proof of Theorem \ref{thm1}, only taking into account the contribution of the boundary of $\Om_\eps$. For $u\in C^\infty_c(\Om)$ we have:
\begin{align}
    \int_{\Om\setminus\Om_\eps} \frac{\abs{\nabla u}^p}{\delta^p} &  = \frac{1}{p-1} \int_{\Om\setminus\Om_\eps} \delta^{1-p} \nabla\delta\nabla(\abs{\nabla u}^p) dx + \frac{1}{p-1} \int_{\Om\setminus\Om_\eps} \frac{\abs{\nabla u}^p}{\delta^{p-1}(x)} \Delta\delta dx   \notag\\
    & - \frac{1}{p-1} \int_{\prt\Om\eps} \frac{|\nabla u|^p}{\delta^{p-1}(x)} (\nabla\delta \cdot\nu_\eps) d\sigma   \notag\\
    & \leq \frac{p}{p-1} \int_{\Om\setminus\Om_\eps} \delta^{1-p} \abs{\nabla u}^{p-1} |\;\hess u| \; dx + \frac{1}{p-1} \int_{\Om\setminus\Om_\eps} \frac{\abs{\nabla u}^p}{\delta^{p-1}(x)} \Delta\delta dx.  \notag
\end{align}
Again, by applying the Young Inequality we get
\begin{equation}   \label{ineqOmegaeps}
        \int_{\Om\setminus\Om_\eps} \abs{\hess u}^p dx \geq \left(\frac{p-1}{p} \right)^p \int_{\Om\setminus\Om_\eps} \frac{\abs{\nabla u}
        ^p}{\delta^p(x)} dx + \left(\frac{p-1}{p} \right)^{p-1} \int_{\Om\setminus\Om_\eps} \frac{\abs{\nabla u}
        ^p}{\delta^{p-1}(x)} (-\Delta\delta) dx,    
\end{equation}
Since $\hess u\in L^p(\Om)$, we have
\begin{equation}
    \int_{\Om\setminus\Om_\eps} |\hess u|^p dx \to  \int_{\Om} |\hess u|^p dx, \;\;\text{as}\;\;\eps\to 0.    \label{convg-hess}
\end{equation}
Then, by Fatou's Lemma, \eqref{ineqOmegaeps} and \eqref{convg-hess}, we successively obtain
\begin{align}
    \left(\frac{p-1}{p} \right)^p & \int_{\Om} \frac{\abs{\nabla u}
        ^p}{\delta^p(x)} dx  + \left(\frac{p-1}{p} \right)^{p-1} \int_{\Om} \frac{\abs{\nabla u}
        ^p}{\delta^{p-1}(x)} (-\Delta\delta) dx      \notag\\
        & = \left(\frac{p-1}{p} \right)^p \int_{\Om} \liminf_{\eps\to 0} \frac{\abs{\nabla u}
        ^p}{\delta^p(x)} \chi_{\Om\setminus\Om_\eps} dx + \left(\frac{p-1}{p} \right)^{p-1} \int_{\Om} \liminf_{\eps\to 0} \frac{\abs{\nabla u}
        ^p}{\delta^{p-1}(x)} (-\Delta\delta) \chi_{\Om\setminus\Om_\eps}dx   \notag\\ 
        & \leq \liminf_{\eps\to 0} \left(\frac{p-1}{p} \right)^p \int_{\Om\setminus\Om_\eps} \frac{\abs{\nabla u}
        ^p}{\delta^p(x)} dx + \liminf_{\eps\to 0} \left(\frac{p-1}{p} \right)^{p-1} \int_{\Om\setminus\Om_\eps} \frac{\abs{\nabla u}
        ^p}{\delta^{p-1}(x)} (-\Delta\delta) dx    \notag\\
        & \leq \liminf_{\eps\to 0} \left\{ \left(\frac{p-1}{p} \right)^p \int_{\Om\setminus\Om_\eps} \frac{\abs{\nabla u}
        ^p}{\delta^p(x)} dx + \left(\frac{p-1}{p} \right)^{p-1} \int_{\Om\setminus\Om_\eps} \frac{\abs{\nabla u}
        ^p}{\delta^{p-1}(x)} (-\Delta\delta) dx \right\}    \notag\\
        & = \liminf_{\eps\to 0} \int_{\Om\setminus\Om_\eps} |\hess u|^p dx   \notag\\
        & = \int_\Om |\hess u|^p dx.   \notag
\end{align}
\end{proof}

\section{Second order inequalities with the Laplacian}
\begin{proof}[Proof of Theorem \ref{thm3}]
Let $u(x)=v(\delta(x))$. By $v'$ we denote the partial derivative with respect to $\delta$, i.e. $\frac{\prt v}{\prt \delta}$. Again, by Eikonal equation $|\nabla\delta|=1$, we have the following identity 
$$\delta^{-p}=\frac{1}{1-p}\nabla\left(\delta^{1-p}\right)\nabla\delta.$$
Take into account that, by the formula \eqref{ec:s-laplacian}, the Laplacian of $u$ is $\Delta u = v''(\delta) + \Delta\delta v'(\delta)$. Then, integrating by parts and applying Young's Inequality $ab\leq \frac{1}{p}a^p+\frac{p-1}{p}b^\frac{p}{p-1}$ with $a=\frac{p}{p-1}\left[v''+\Delta\delta v'\right]$ and $b=\abs{v'}^{p-2}v'\delta^{1-p}$, we get
\begin{align}
    \int_\Om \frac{\abs{\nabla u}^p}{\delta^p(x)} dx & = \frac{1}{1-p} \int_\Om \abs{v'}^p \nabla\left(\delta^{1-p}(x)\right)\nabla\delta dx  \notag\\
    & = \frac{p}{p-1} \int_\Om \abs{v'}^{p-2} v' v''\delta^{1-p}(x) dx + \frac{1}{p-1} \int_\Om \abs{v'}^p \delta^{1-p}(x) \Delta\delta dx  \notag\\
    & = \frac{p}{p-1} \int_\Om \abs{v'}^{p-2} v' \left[v''+\Delta\delta v'\right] \delta^{1-p}(x) dx -  \int_\Om \abs{v'}^p \delta^{1-p}(x) \Delta\delta dx  \notag\\
    & \leq \frac{1}{p} \left(\frac{p}{p-1}\right)^p \int_\Om \left[v''+\Delta\delta v'\right]^p dx + \frac{p-1}{p} \int_\Om \frac{\abs{v'}^p}{\delta^p(x)} dx - \int_\Om \abs{v'}^p \delta^{1-p}(x) \Delta\delta dx  \notag\\
    & = \frac{1}{p} \left(\frac{p}{p-1}\right)^p \int_\Om \abs{\Delta u}^p dx + \frac{p-1}{p} \int_\Om \frac{\abs{\nabla u}^p}{\delta^p(x)} dx - \int_\Om \abs{\nabla u}^p \delta^{1-p}(x) \Delta\delta dx. \notag
\end{align}
After an algebraic reorganization, we conclude that
\begin{equation}
    \int_\Om \abs{\Delta u}^p dx \geq \left(\frac{p-1}{p}\right)^p \int_\Om \frac{\abs{\nabla u}^p}{\delta^p(x)} dx + p\left(\frac{p-1}{p}\right)^p \int_\Om \frac{\abs{\nabla u}^p}{\delta^{p-1}(x)} \Delta\delta dx.
\end{equation}
\end{proof}

\section{Existence of non-trivial solutions of \eqref{ec:systemP}}

We will prove Theorem \ref{thmExistence} using critical point theory. The proof will be split into two parts: for the first part of the theorem we use the direct method of calculus of variations, while for the second part we use the Mountain Pass Theorem. First, let us recall some definitions and set up the framework.\\
For any $\lmb<\lmb^\sharp(p, \Om, Z=\frac{1}{\delta^p})$ consider the functional $I_\lmb:W^{2,p}_0(\Om)\to\R$,
\begin{equation}   \label{ec:defI}
    I_\lmb[u]:= \frac{1}{p} \left( \int_\Om \abs{\Delta u}^p dx - \lmb\int_\Om \frac{\abs{\nabla u}^p}{\delta^p(x)}dx \right) - \frac{1}{q} \int_\Om \abs{u}^{q}dx.   
\end{equation}
The Fréchet derivative of $I_\lmb$ at $u$ is defined for any $\phi\in W^{2,p}_0(\Om)$ by 
\begin{equation}   \label{ec:defI'}
    I_\lmb'[u]\phi = \int_\Om \abs{\Delta u}^{p-2}\Delta u \Delta\phi dx - \lmb\int_\Om \frac{\abs{\nabla u}^{p-2}}{\delta^p(x)}\nabla u\cdot\nabla\phi dx - \int_\Om \abs{u}^{q-2}u\phi dx.   
\end{equation}
Clearly, $I_\lmb\in C^1\left(W^{2,p}_0(\Om),\R\right)$, meaning that the Fréchet derivative of $I_\lmb$ exists and is continuous on $W^{2,p}_0(\Om)$. According to the definition \eqref{weak-sol}, we may look for weak solutions of the problem \eqref{ec:systemP} as critical points for $I_\lmb$. We will need the following compacity condition.
\begin{definition}[Palais-Smale condition, \cite{mawhin}]
    Let $X$ be a Banach space and $I\in C^1(X,\R)$. We say that $I$ satisfies the Palais-Smale condition if any sequence $(u_n)_n\subset X$ which satisfies the conditions
    \begin{enumerate}
        \item[i)] $\left(I[u_n]\right)_n$ is bounded and
        \item[ii)] $I'[u_n] \to 0$ in $X'$
    \end{enumerate}
    contains a convergent subsequence.
\end{definition}
For the sake of completeness, let us recall the Mountain Pass Theorem.
\begin{theorem}[Mountain Pass, \cite{evansCarte}]   \label{thmMP}
    Let $X$ be a Banach space and $I\in C^1(X,\R)$ with the following properties:
    \begin{enumerate}
        \item[i)] $I[0]=0$;
        \item[ii)] there exist constants $r,\rho>0$ such that if $\|u\|_X=\rho$ then $I[u]\geq r$;
        \item[iii)] there exists an element $v\in X$ with $\|v\|_X>\rho$ and $I[v]\leq 0$;
        \item[iv)] $I$ satisfies the Palais-Smale condition. 
    \end{enumerate}
    Define $\Gamma:=\{ g\in C([0,1],X) \;|\; g(0)=0,\; g(1)=v \}$. Then the value 
    $$c=\inf_{g\in\Gamma} \max_{0\leq t\leq 1} I[g(t)]$$
    is a critical value of $I$. 
\end{theorem}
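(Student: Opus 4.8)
The plan is to run the classical deformation argument (quantitative deformation lemma plus Palais--Smale), exactly as in the standard proofs. First I would record that the candidate value $c$ is finite and satisfies $c\geq r>0$. Finiteness is clear since $\Gamma\neq\emptyset$ (the segment $g(t)=tv$ belongs to $\Gamma$). For the lower bound, fix any $g\in\Gamma$; the map $t\mapsto\|g(t)\|_X$ is continuous, vanishes at $t=0$ by i) and exceeds $\rho$ at $t=1$ by iii), so the intermediate value theorem produces $t_0\in(0,1)$ with $\|g(t_0)\|_X=\rho$, whence $I[g(t_0)]\geq r$ by ii). Taking the infimum over $g$ gives $c\geq r>0$. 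In particular I may fix $\eps\in(0,c/2)$ in what follows.

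Next I would argue by contradiction: assume $c$ is \emph{not} a critical value of $I$, i.e. $I'[u]\neq 0$ for every $u$ with $I[u]=c$. Combining this with the Palais--Smale condition gives a quantitative lower bound on the derivative in a band around level $c$: there exist $\eps\in(0,c/2)$ and $\sigma>0$ with $\|I'[u]\|_{X'}\geq\sigma$ for all $u$ satisfying $|I[u]-c|\leq 2\eps$. Indeed, if no such $\eps,\sigma$ existed one could extract a sequence $(u_n)$ with $I[u_n]\to c$ and $I'[u_n]\to 0$ in $X'$; by iv) a subsequence converges to some $u$, and continuity of $I$ and $I'$ forces $I[u]=c$, $I'[u]=0$, contradicting the assumption.

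The technical core is the quantitative deformation lemma. Using the lower bound $\|I'\|\geq\sigma$ on the band $\{|I-c|\leq 2\eps\}$ I would construct a locally Lipschitz pseudo-gradient vector field $V$ for $I$ there (with $\|V(u)\|\leq 2\|I'[u]\|$ and $\langle I'[u],V(u)\rangle\geq\|I'[u]\|^2$), multiply it by a locally Lipschitz cutoff that is $1$ on $\{|I-c|\leq\eps\}$ and $0$ off $\{|I-c|\leq 2\eps\}$, normalize, and integrate the resulting ODE $\dot\eta=-(\text{cutoff})\,V(\eta)/\|V(\eta)\|$ (or a similar normalization) to obtain a continuous map $\eta:[0,1]\times X\to X$ such that $\eta(0,\cdot)=\mathrm{id}$, $\eta(t,u)=u$ whenever $|I[u]-c|\geq 2\eps$, and $I[\eta(1,u)]\leq c-\eps$ whenever $I[u]\leq c+\eps$. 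The point is that along the flow $I$ decreases at a rate bounded below in terms of $\sigma$, so a unit time interval suffices to push the sublevel set $\{I\leq c+\eps\}$ into $\{I\leq c-\eps\}$. This construction, in particular the partition-of-unity argument producing the pseudo-gradient field on the infinite-dimensional Banach space $X$, is the main obstacle; everything else is bookkeeping.

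Finally, by definition of $c$ as an infimum I would pick a path $g\in\Gamma$ with $\max_{t\in[0,1]}I[g(t)]\leq c+\eps$, and set $\tilde g(t):=\eta(1,g(t))$, which is continuous. Since $I[0]=0<c-2\eps$ (by i) and $\eps<c/2$) and $I[v]\leq 0<c-2\eps$ (by iii), the deformation fixes both endpoints, so $\tilde g(0)=0$ and $\tilde g(1)=v$, i.e. $\tilde g\in\Gamma$. But then $\max_{t\in[0,1]}I[\tilde g(t)]\leq c-\eps<c$, contradicting $c=\inf_{g\in\Gamma}\max_{t}I[g(t)]$. Therefore $c$ must be a critical value of $I$, which completes the proof.
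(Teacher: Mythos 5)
The paper does not prove this theorem; it is stated as a classical result cited from Evans' textbook and used as a black box in the proof of Theorem~\ref{thmExistence}, Case 2, so there is no in-paper proof to compare against. Your argument is the standard deformation-lemma proof of the Ambrosetti--Rabinowitz mountain pass theorem and is correct in its outline: the intermediate value theorem gives $c\geq r>0$; Palais--Smale plus the contradiction hypothesis yields a uniform lower bound $\sigma>0$ on $\|I'\|_{X'}$ in a band $\{|I-c|\leq 2\eps\}$; a pseudo-gradient vector field (built via a locally Lipschitz partition of unity, using paracompactness of $X$) and a cutoff give the quantitative deformation $\eta$ with $\eta(1,\cdot)$ mapping $\{I\leq c+\eps\}$ into $\{I\leq c-\eps\}$ while fixing $\{|I-c|\geq 2\eps\}$; and deforming a near-optimal admissible path contradicts the definition of $c$ once you check, as you do, that both endpoints $0$ and $v$ lie strictly below level $c-2\eps$ and are therefore fixed. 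This is precisely the proof in Evans \S 8.5 or Willem's monograph, and you rightly flag the pseudo-gradient construction as the only genuinely technical step; the rest is bookkeeping.

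For completeness it is worth noting that there is a well-known alternative route via Ekeland's variational principle: applied to the functional $g\mapsto\max_t I[g(t)]$ on $\Gamma$, it produces a Palais--Smale sequence at level $c$ directly, and then iv) yields the critical point without ever constructing a pseudo-gradient flow. That version trades the partition-of-unity argument for Ekeland's principle and is sometimes preferred when only a PS sequence (rather than the full deformation machinery) is wanted; both are standard and of comparable length.
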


\subsection{Proof of Theorem \ref{thmExistence} - Case 1.}
Let $I:= \inf_{\phi\in\W} I_\lmb[\phi] <\infty$. Let $(\phi_n)_n$ in $\W$ such that $I_\lmb[\phi_n]\searrow I$. We will prove that $\phi$ is a minimum for $I$. We need the following lemma.
\begin{lemma}
    For any $\lmb<\lmb^\sharp(p, \Om, Z=\frac{1}{\delta^p})$ the functional $I_\lmb$ is coercive.
\end{lemma}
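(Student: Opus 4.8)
The plan is to reduce coercivity to two ingredients already present in the paper: the equivalence between the functional's quadratic-type part $u\mapsto \int_\Om|\Delta u|^p dx - \lmb\int_\Om |\nabla u|^p/\delta^p(x)\,dx$ and the $\W$-norm when $\lmb<\lmb^\sharp$, and the continuous embedding $\W\hookrightarrow L^q(\Om)$ in the relevant range of $q$.

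First I would invoke the Hardy–Rellich inequality \eqref{HR}, which through the norm equivalence recorded after \eqref{ec:equivnorms} gives, for $\lmb<\lmb^\sharp(p,\Om,Z=\frac{1}{\delta^p})$, a constant $c_1:=1-\lmb^+/\lmb^\sharp(p,\Om,Z=\frac{1}{\delta^p})>0$ such that
\[
\int_\Om \abs{\Delta u}^p dx - \lmb\int_\Om \frac{\abs{\nabla u}^p}{\delta^p(x)}dx \;\geq\; c_1\,\|u\|_{\W}^p, \qquad \forall\, u\in\W .
\]
When $\lmb\leq 0$ one may simply take $c_1=1$, the subtracted term being nonnegative; when $0<\lmb<\lmb^\sharp$ this is the left inequality of that equivalence, and $\lmb^\sharp>0$ is guaranteed by \eqref{constCZ}. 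Plugging this into the definition \eqref{ec:defI} gives
\[
I_\lmb[u]\;\geq\; \frac{c_1}{p}\,\|u\|_{\W}^p-\frac1q\int_\Om\abs{u}^q dx .
\]

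Next I would use the embedding \eqref{embed}: since $\Om$ is bounded and $1<q<p$ (hence $q<p^{**}$ when $1<p<\frac{N}{2}$, while any exponent is admissible when $p\geq \frac{N}{2}$), there is $c_2>0$ with $\|u\|_{L^q(\Om)}\leq c_2\|u\|_{\W}$. Substituting,
\[
I_\lmb[u]\;\geq\; \frac{c_1}{p}\,\|u\|_{\W}^p-\frac{c_2^{\,q}}{q}\,\|u\|_{\W}^q, \qquad \forall\, u\in\W .
\]
Because $q<p$ and $c_1>0$, the scalar function $t\mapsto \frac{c_1}{p}t^p-\frac{c_2^{\,q}}{q}t^q$ tends to $+\infty$ as $t\to+\infty$; hence $I_\lmb[u]\to+\infty$ as $\|u\|_{\W}\to\infty$, which is exactly coercivity.

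There is no genuinely hard step here; the only points requiring care are that the constant $c_1$ be strictly positive — this is where the sharp Hardy–Rellich inequality together with $\lmb^\sharp>0$ enters — and that $q$ lie in the range for which $\W\hookrightarrow L^q(\Om)$ holds, both of which are ensured by the hypotheses of Case~1 ($1<q<p$, $\lmb<\lmb^\sharp$).
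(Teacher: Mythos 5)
Your proof follows essentially the same route as the paper: Hardy--Rellich to absorb the singular term, the Sobolev embedding to bound the $L^q$ norm by the $W^{2,p}_0$ norm, and then the elementary fact that $t\mapsto c_1 t^p/p - c_2^q t^q/q \to +\infty$ when $q<p$. The one difference is that by writing $c_1 = 1-\lmb^+/\lmb^\sharp$ and treating $\lmb\leq 0$ separately you avoid the (harmless but slightly inaccurate) intermediate inequality $I_\lmb[\phi]\geq \frac{1}{p}(1-\lmb/\lmb^\sharp)\|\Delta\phi\|_{L^p}^p-\frac1q\|\phi\|_{L^q}^q$ that the paper states, which as written does not actually follow from Hardy--Rellich when $\lmb<0$; your formulation is the cleaner one.
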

\begin{proof}
    By the Hardy-Rellich inequality \eqref{HR} and embedding \eqref{embed} we have
    \begin{align}
        I_\lmb[\phi] & = \frac{1}{p}\|\Delta\phi\|_{L^p(\Om)}^p - \frac{\lmb}{p} \left\|\frac{\nabla\phi}{\delta}\right\|_{L^p(\Om)}^p - \frac{1}{q}\|\phi\|_{L^q(\Om)}^q    \notag\\
        & \geq \frac{1}{p}\left(1-\frac{\lmb}{\lmb^\sharp(p, \Om, Z=\frac{1}{\delta^p})}\right) \|\Delta\phi\|_{L^p(\Om)}^p - \frac{1}{q}\|\phi\|_{L^q(\Om)}^q    \notag\\
        & \geq \frac{1}{p}\left(1-\frac{\lmb}{\lmb^\sharp(p, \Om, Z=\frac{1}{\delta^p})}\right) \|\phi\|_{\W}^p - \frac{C}{q}\|\phi\|_{\W}^q    \notag\\
        & = \|\phi\|_{\W}^q \left(C_1\|\phi\|_{\W}^{p-q}-\frac{1}{q}\right).   \notag
    \end{align}
    Since $q<p$, we get the conclusion.
\end{proof}
This implies that the sequence $(\phi_n)_n$ is bounded in $\W$ and therefore it converges weakly to a function $\phi_0\in\W$. Now we prove that $\phi_0$ is a minimizer for $I_\lmb$. Note that
\begin{equation}
    I_\lmb[\phi_0] = \frac{1}{p}\|\phi_0\|^p - \frac{1}{q} \|\phi_0\|_{L^q(\Om)}^q.  \notag
\end{equation}
By the weak lower semi-continuity of the norm, we get:
\begin{align}
    I\leq I_\lmb[\phi_0] & = \frac{1}{p}\|\phi_0\|^p - \frac{1}{q} \|\phi_0\|_{L^q(\Om)}^q    \notag\\
    & \leq \frac{1}{p}\liminf_{n\to\infty}\|\phi_n\|^p - \frac{1}{q} \lim_{n\to\infty} \|\phi_n\|_{L^q(\Om)}^q     \notag\\
    & \leq \liminf_{n\to\infty} \left(\frac{1}{p}\|\phi_n\|^p - \frac{1}{q} \|\phi_n\|_{L^q(\Om)}^q\right)   \notag\\
    & = \liminf_{n\to\infty} I_\lmb[\phi_n] = \lim_{n\to\infty} I_\lmb[\phi] = I.   \notag
\end{align}
Hence, $I_\lmb[\phi_0]=I$ and we can deduce that $\phi_0$ is a weak solution for \eqref{ec:systemP}. To show that the solution is non-trivial, let $w\in\C$, $w\neq 0$ and $t>0$. Then
    \begin{align}
        I_\lmb[tw] & = \frac{1}{p}\int_\Om \abs{\Delta(tw)}^p dx - \frac{\lmb}{p} \int_\Om \frac{\abs{\nabla(tw)}^p}{\delta^p(x)} dx - \frac{1}{q}\int_\Om \abs{tw}^{q} dx   \notag\\
        & = \frac{t^p}{p} \|w\|^p - \frac{t^{q}}{q}\|w\|_{L^q(\Om)}^p \leq t^q \left(C_1 t^{p-q} - C_2\right)   \notag
    \end{align}
Thus, for $q<p$, there exists $t_0>0$, say $t_0<\left(\frac{C_2}{C_1}\right)^\frac{1}{p-q}$, such that for any $t<t_0$ we have $I_\lmb[w]<0$. We get by the definition of $I$ that $I=I_\lmb[\phi_0]<0$, so $\phi_0$ is not trivial.

\subsection{Proof of Theorem \ref{thmExistence} - Case 2.}
We check the hypotheses of the Mountain Pass Theorem. Clearly, $I_\lmb[0]=0$. Denote by $\lmb^+$ and $\lmb^-$ the positive part the negative part, respectively, of $\lmb<\lmb^\sharp(p,\Om, Z=\frac{1}{\delta^p})$.
\begin{lemma}  \label{lemma1MP}
    For any $\lmb<\lmb^\sharp(p,\Om, Z=\frac{1}{\delta^p})$ there exists $r,\rho>0$ such that $I_\lmb[u]\geq r$ for any $u\in W^{2,p}_0(\Om)$ with $\|u\|_{\W}=\rho$.
\end{lemma}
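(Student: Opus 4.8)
This is the classical ``mountain--pass geometry at the origin'': I plan to bound the $p$-homogeneous part of $I_\lmb$ from below by $\|u\|_{\W}^p$ using the Hardy--Rellich inequality, to absorb the $q$-homogeneous term $\|u\|_{L^q(\Om)}^q$ via the Sobolev embedding \eqref{embed}, and then to exploit $q>p$ to make the resulting difference strictly positive on a sufficiently small sphere of $\W$.

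First I would recall that for $\lmb<\lmb^\sharp\left(p,\Om,Z=\frac{1}{\delta^p}\right)$, combining the Hardy--Rellich inequality \eqref{HR} with the definition \eqref{ec:equivnorms} of the equivalent norm $\|\cdot\|$ yields $\|u\|^p\geq c_0\,\|u\|_{\W}^p$ for all $u\in\W$, where
$$c_0:=1-\frac{\lmb^+}{\lmb^\sharp\left(p,\Om,Z=\frac{1}{\delta^p}\right)}>0,$$
the strict positivity being exactly the standing hypothesis $\lmb<\lmb^\sharp$ (and $\lmb^\sharp>0$ since \eqref{HR} holds with a positive constant; if $\lmb\leq 0$ one simply has $c_0\geq 1$). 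Writing $I_\lmb[u]=\frac1p\|u\|^p-\frac1q\|u\|_{L^q(\Om)}^q$ as in \eqref{ec:defI}, this gives $I_\lmb[u]\geq \frac{c_0}{p}\|u\|_{\W}^p-\frac1q\|u\|_{L^q(\Om)}^q$.

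Next, since in Case 2 we are in the range $1<p<\frac{N}{2}$, $p<q<p^{**}$, the embedding \eqref{embed} is in particular continuous, so there is $C_q>0$ with $\|u\|_{L^q(\Om)}\leq C_q\|u\|_{\W}$; hence
$$I_\lmb[u]\ \geq\ \|u\|_{\W}^p\Big(\frac{c_0}{p}-\frac{C_q^{q}}{q}\,\|u\|_{\W}^{q-p}\Big).$$
The function $t\mapsto \frac{c_0}{p}-\frac{C_q^{q}}{q}\,t^{q-p}$ is decreasing on $(0,\infty)$ with value $\frac{c_0}{p}>0$ at $t=0$, so I would fix $\rho>0$ so small that it is $\geq\frac{c_0}{2p}$ on $(0,\rho]$, and then set $r:=\frac{c_0}{2p}\rho^p>0$. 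With these choices, any $u\in\W$ with $\|u\|_{\W}=\rho$ satisfies $I_\lmb[u]\geq r$, which is the claim.

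No genuine obstacle arises: the argument is entirely routine, and the only points that deserve attention are that $c_0>0$ (immediate from $\lmb<\lmb^\sharp$ together with $\lmb^\sharp>0$) and that here one only needs the \emph{continuity} of the embedding \eqref{embed}; its compactness will instead be used separately in the verification of the Palais--Smale condition for $I_\lmb$.
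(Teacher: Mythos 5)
Your proof is correct and follows essentially the same route as the paper: bound the quadratic-in-$p$ part below via the Hardy--Rellich inequality \eqref{HR}, absorb the $q$-term via the Sobolev embedding \eqref{embed}, and use $q>p$ to choose $\rho$ small enough that the bracket stays positive. The only cosmetic difference is your explicit midpoint choice $r=\frac{c_0}{2p}\rho^p$ versus the paper's $r=\rho^p(C_1-C_2\rho^{q-p})$; the substance is identical.
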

\begin{proof}
    Using the Hardy-Rellich inequality \eqref{HR} and the Sobolev embedding $\W\subset L^{q}(\Om)$ we get 
    \begin{align}
        I_\lmb[u] & \geq \frac{1}{p}\left(1-\frac{\lmb^+}{\lmb^\sharp(p, \Om, Z=\frac{1}{\delta^p})}\right)\int_\Om \abs{\Delta u}^p dx - \frac{1}{q}\|u\|_{L^q(\Om)}^q \notag\\
        & \geq \frac{1}{p}\left(1-\frac{\lmb^+}{\lmb^\sharp(p, \Om, Z=\frac{1}{\delta^p})}\right)\|u\|_{\W}^p - \frac{C}{q}\|u\|_{\W}^{q}    \notag
    \end{align} 
    For $\|u\|_{\W}=\rho>0$, we deduce that
    $$I_\lmb[u]\geq C_1\rho^p - C_2\rho^{q}=\rho^p(C_1-C_2\rho^{q-p}).$$
    Since $p<q$, we can find $\rho$ sufficiently small $(\rho<(C_1/C_2)^{1/(q-p)})$ such that there exists $r=r(\rho)=\rho^p(C_1-C_2\rho^{q-p})$ with $I_\lmb[u]\geq r>0$. 
\end{proof}
\begin{lemma}   \label{lemma2MP}
    For any $\lmb<\lmb^\sharp(p,\Om, Z=\frac{1}{\delta^p})$ there exists $\rho>0$ and $v\in\W$ such that $\|v\|_{\W}>\rho$ and $I_\lmb[v]<0$.
\end{lemma}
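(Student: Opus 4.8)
The plan is to exploit the superhomogeneity of the nonlinearity in Case 2, namely that $q>p$, so that along any ray $t\mapsto tw$ the functional $I_\lmb$ eventually becomes negative. Fix once and for all some $w\in\C$ with $w\not\equiv 0$ (any fixed nonzero element of $\W$ would do). Recall that, since $\lmb<\lmb^\sharp\!\left(p,\Om,Z=\tfrac{1}{\delta^p}\right)$, the expression \eqref{ec:equivnorms} defines a norm $\|\cdot\|$ on $\W$ equivalent to $\|\cdot\|_{\W}$, by the Hardy--Rellich inequality \eqref{HR}; in particular $\|w\|<\infty$ and, rewriting $I_\lmb$ via \eqref{ec:defI} in terms of this norm, for every $t>0$ we have
\begin{equation}
    I_\lmb[tw]=\frac{t^p}{p}\,\|w\|^p-\frac{t^q}{q}\,\|w\|_{L^q(\Om)}^q
    = t^p\left(\frac{\|w\|^p}{p}-\frac{t^{q-p}}{q}\,\|w\|_{L^q(\Om)}^q\right).
\end{equation}

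Since $w\not\equiv 0$ we have $\|w\|_{L^q(\Om)}>0$, and since $q>p$ the bracketed expression tends to $-\infty$ as $t\to\infty$; hence $I_\lmb[tw]\to-\infty$. Consequently there is some $t_1>0$ with $I_\lmb[t_1w]<0$, and after possibly enlarging $t_1$ we may also arrange $t_1\|w\|_{\W}>\rho$, where $\rho>0$ is the radius produced in Lemma \ref{lemma1MP}. Setting $v:=t_1 w$ then gives $v\in\W$ with $\|v\|_{\W}>\rho$ and $I_\lmb[v]<0$, which is exactly the assertion of the lemma.

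I do not expect any genuine obstacle here: the only point deserving a word of care is the finiteness of $\|w\|$, which follows from the norm equivalence — and for $\lmb\le 0$, as in Case 2, even more directly, since $\|w\|^p=\|\Delta w\|_{L^p(\Om)}^p+\abs{\lmb}\int_\Om\frac{\abs{\nabla w}^p}{\delta^p(x)}\,dx<\infty$ because $\nabla w$ has compact support inside $\Om$ when $w\in\C$. This computation is the exact mirror of the non-triviality argument at the end of Case 1, with ``$t$ small'' and ``$t$ large'' interchanged because there $q<p$ while here $q>p$. Combined with $I_\lmb[0]=0$, Lemma \ref{lemma1MP}, and the (separately verified) Palais--Smale condition for $I_\lmb$ in the regime $1<p<\tfrac{N}{2}$, $p<q<p^{**}$, $\lmb\le 0$, this places $I_\lmb$ in the setting of the Mountain Pass Theorem \ref{thmMP} and yields a critical value $c\ge r>0$, hence a nontrivial weak solution of \eqref{ec:systemP}, finishing Case 2.
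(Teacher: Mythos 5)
Your proof is correct and follows essentially the same approach as the paper: fix a nonzero $w\in C_c^\infty(\Om)$, scale along the ray $t\mapsto tw$, and use $q>p$ to force $I_\lmb[tw]\to-\infty$ as $t\to\infty$. The only cosmetic difference is that you express $I_\lmb[tw]$ exactly via the equivalent norm \eqref{ec:equivnorms}, whereas the paper bounds it from above using the Hardy--Rellich inequality \eqref{HR}; both yield the same conclusion.
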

\begin{proof}
    Let $w\in\C$, $w\neq 0$ and $t>0$. Then, by the inequality \eqref{HR} and embedding \eqref{embed} we have
    \begin{align}
        I_\lmb[tw] & = \frac{1}{p}\int_\Om \abs{\Delta(tw)}^p dx - \frac{\lmb}{p} \int_\Om \frac{\abs{\nabla(tw)}^p}{\delta^p(x)} dx - \frac{1}{q}\int_\Om \abs{tw}^{q} dx   \notag\\
        & \leq \frac{t^p}{p}\left(1+\frac{\lmb^-}{\lmb^\sharp(p,\Om, Z=\frac{1}{\delta^p})}\right) \|w\|_{\W}^p - \frac{t^{q}}{q}\|w\|_{L^q(\Om)}^p \;\; \stackrel{t\to\infty}{\longrightarrow} -\infty.   \notag
    \end{align}
    Since $p<q$, there exists $t_0>0$ and $\rho>0$ such that for any $t>t_0$ we can pick $v\in\W$ with $\|v\|_{\W}=t\|w\|_{\W}\geq t_0\|w\|_{\W} (=\rho)$ and $I_\lmb[v]<0$.   
\end{proof}
With these two lemmas, we showed that the functional $I_\lmb[.]$ has the geometry of the Mountain Pass.
\begin{lemma}   \label{ec:lemma3MP}
    For any $\lmb\leq 0$ and any $1<p<\frac{N}{2}$, the functional $I_\lmb$ satisfies the Palais-Smale condition.
\end{lemma}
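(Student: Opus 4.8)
The plan is to establish both Palais--Smale conditions by the standard monotone-operator scheme, where the hypothesis $\lmb\le 0$ is exactly what makes the argument go through cleanly. Fix a sequence $(u_n)_n\subset\W$ with $(I_\lmb[u_n])_n$ bounded and $I_\lmb'[u_n]\to 0$ in $(\W)'$. Since $\lmb\le 0$, the quantity $\|u\|^p:=\|\Delta u\|_{L^p(\Om)}^p-\lmb\int_\Om|\nabla u|^p\delta^{-p}\,dx$ defines a norm on $\W$ equivalent to $\|\cdot\|_{\W}$, and from \eqref{ec:defI}, \eqref{ec:defI'} one has $I_\lmb[u]=\tfrac1p\|u\|^p-\tfrac1q\|u\|_{L^q(\Om)}^q$ and $I_\lmb'[u]u=\|u\|^p-\|u\|_{L^q(\Om)}^q$. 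Hence $qI_\lmb[u_n]-I_\lmb'[u_n]u_n=\big(\tfrac qp-1\big)\|u_n\|^p$; since $q>p$ the left-hand side is bounded above by $C+\|I_\lmb'[u_n]\|_{(\W)'}\|u_n\|$, and because $p>1$ this forces $(u_n)_n$ to be bounded in $\W$.

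Next I would extract, using the reflexivity of $\W$, a subsequence with $u_n\rightharpoonup u$ in $\W$; since $1<p<\tfrac N2$ and $p<q<p^{**}$, the compact embedding \eqref{embed} gives $u_n\to u$ strongly in $L^q(\Om)$, hence $\int_\Om|u_n|^{q-2}u_n(u_n-u)\,dx\to 0$ by H\"older's inequality. Introduce the operator $A\colon\W\to(\W)'$, $\langle Au,\phi\rangle:=\int_\Om|\Delta u|^{p-2}\Delta u\,\Delta\phi\,dx-\lmb\int_\Om\delta^{-p}|\nabla u|^{p-2}\nabla u\cdot\nabla\phi\,dx$. Combining $I_\lmb'[u_n]\to 0$, the boundedness of $(u_n-u)_n$ and the previous limit gives $\langle Au_n,u_n-u\rangle\to 0$. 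On the other hand, the maps $u\mapsto\Delta u$ and $u\mapsto\delta^{-1}\nabla u$ are bounded and linear from $\W$ into $L^p(\Om)$ (the latter by the Hardy--Rellich inequality \eqref{HR}), so $u_n-u\rightharpoonup 0$ implies $\langle Au,u_n-u\rangle\to 0$, and consequently $\langle Au_n-Au,u_n-u\rangle\to 0$.

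At this point I would write
\begin{align*}
\langle Au_n-Au,u_n-u\rangle&=\int_\Om\big(|\Delta u_n|^{p-2}\Delta u_n-|\Delta u|^{p-2}\Delta u\big)(\Delta u_n-\Delta u)\,dx\\
&\quad+(-\lmb)\int_\Om\frac{\big(|\nabla u_n|^{p-2}\nabla u_n-|\nabla u|^{p-2}\nabla u\big)\cdot(\nabla u_n-\nabla u)}{\delta^p}\,dx,
\end{align*}
and observe that both integrands are pointwise nonnegative by the elementary monotonicity of $\xi\mapsto|\xi|^{p-2}\xi$, while $-\lmb\ge 0$; since the whole sum tends to $0$, the first integral tends to $0$. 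Finally one upgrades this to $\Delta u_n\to\Delta u$ in $L^p(\Om)$, i.e.\ $u_n\to u$ in $\W$: for $p\ge 2$ this is immediate from $(|a|^{p-2}a-|b|^{p-2}b)(a-b)\ge c_p|a-b|^p$, while for $1<p<2$ one combines $(|a|^{p-2}a-|b|^{p-2}b)(a-b)\gtrsim|a-b|^2(|a|+|b|)^{p-2}$ with H\"older's inequality (exponents $\tfrac2p$, $\tfrac2{2-p}$) and the uniform bound on $\|\Delta u_n\|_{L^p(\Om)}$.

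The main obstacle is precisely this last upgrade in the range $1<p<2$: passing from the vanishing of the $(S_+)$-type bracket to strong convergence of the Laplacians is not automatic and requires the H\"older interpolation just indicated. Everything else is routine bookkeeping, with the sign condition $\lmb\le 0$ being exactly what keeps the singular gradient term cooperating with, rather than obstructing, the monotonicity argument (and what makes $\|\cdot\|$ a genuine equivalent norm in the first place).
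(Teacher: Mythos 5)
Your argument is correct and follows essentially the same route as the paper: boundedness of the PS sequence from the coercivity-type identity $qI_\lmb[u_n]-I_\lmb'[u_n]u_n=(\tfrac qp-1)\|u_n\|^p$, extraction of a weak limit with strong $L^q$ convergence via \eqref{embed}, reduction (using $\lmb\le 0$ to discard a sign-definite gradient term) to vanishing of the monotone Laplacian bracket, and the upgrade to strong $W^{2,p}_0$ convergence via the standard $p$-monotonicity inequalities, with the same H\"older-split for $1<p<2$ that the paper carries out using \eqref{ineqlindqvist<2}. The only cosmetic difference is that you package the argument in $(S_+)$-operator language and split $\langle Au_n-Au,u_n-u\rangle$ explicitly into two nonnegative pieces, which makes the role of $\lmb\le 0$ slightly more transparent than the paper's presentation but is mathematically identical.
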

\begin{proof}
    Let $(u_n)_n\subset\W$ be a Palais-Smale sequence, i.e. $\left(I_\lmb[u_n]\right)_n$ is bounded in $\R$ and $I_\lmb'[u_n]\to 0$ in $\left(\W\right)'$ - the second condition means that $I'_\lmb[u_n]\phi \to 0$, for any $\phi\in \W$. Therefore, there exist $C_1, C_2$ positive constants such that $I_\lmb[u_n]\leq C_1$ and $\|I'_\lmb[u_n]\|_{\left( \W \right)'} \leq C_2$. We prove first that $(u_n)_n$ is bounded in $\W$ \text. Again, by \eqref{HR},
    \begin{align}
        qI_\lmb[u_n] + \abs{I_\lmb'[u_n]u_n} & \geq q I_\lmb[u_n] - I_\lmb'[u_n]u_n   \notag\\
        & =\left(\frac{q}{p}-1\right) \left( \int_\Om \abs{\Delta u_n}^p dx - \lmb\int_\Om \frac{\abs{\nabla u_n}^p}{\delta^p(x)} dx \right)  \notag\\
        & \geq \left(\frac{q}{p}-1\right)\left(1-\frac{\lmb}{\lmb^\sharp(p, \Om, Z=\frac{1}{\delta^p})}\right) \|u_n\|_{\W}^p \;:=\; \Tilde{C} \|u_n\|_{\W}^p.   \notag
    \end{align}
    On the other hand, 
    \begin{equation}
        qI_\lmb[u_n] + \abs{I_\lmb'[u_n]u_n} \leq q C_1 + C_2 \|u_n\|_{\W}.   \notag
    \end{equation}
    Thus, we get that
    \begin{equation}
        \Tilde{C}\|u_n\|_{\W} \left( \|u_n\|_{\W}^{p-1} - 1\right) \leq q C_1.   \notag
    \end{equation}
    Hence, $(u_n)_n$ is bounded in $\W$. Therefore, taking into account the compact embedding $\W\subset L^r(\Om)$, for any $1\leq r<p^{**}$ and $1<p<\frac{N}{2}$, there exists a subsequence (denoted also by $(u_n)_n$ from now on, every time we pass to a subsequence) and $u\in\W$ satisfying 
    \begin{align}
        u_n & \rightharpoonup u \;\; \text{in }\;\W, \;\; (\text{due to $\W$ being reflexive; see, e.g.},\cite[\text{Theorem 3.18}]{brezis-carte})   \label{weak-conv}\\
        u_n & \to u \;\;\text{in }\;L^{r}(\Om), \;\;(\text{Rellich-Kondrachov Theorem, see, e.g., \cite[Theorem 6.3]{adams}}.  \label{Lr-conv}  
    \end{align}
By the weak convergence of $(u_n)_n$, we know that $\varphi(u_n)\to\varphi(u)$, for any $\varphi\in \left(\W\right)'$. Since $I_\lmb'[u]$ is in $\left(\W\right)'$, we can take $\varphi=I_\lmb'[u]$, and we obtain that $I_\lmb'[u](u_n-u)\to 0$.\\
Then, considering that $I'_\lmb[u_n]\to 0$ in $\left(\W\right)'$ we have
\begin{equation}
     \left(I_\lmb'[u_n] - I_\lmb'[u]\right)(u_n-u) \to 0 \;\;\; \text{when}\;\;n\to\infty.    \notag
\end{equation}
By this and \eqref{ec:defI'} we also have
\begin{align}    \label{diffI'}
    \left(I_\lmb'[u_n] - I_\lmb'[u]\right)(u_n-u) &= I_\lmb'[u_n](u_n-u) - I_\lmb'[u](u_n-u)  \notag\\
    & = \int_\Om \left[\abs{\Delta u_n}^{p-2}\Delta u_n - \abs{\Delta u}^{p-2}\Delta u\right](\Delta u_n-\Delta u) dx  \notag\\
    & - \lmb\int_\Om \left[ \frac{\abs{\nabla u_n}^{p-2}\nabla u_n}{\delta^p(x)} - \frac{\abs{\nabla u}^{p-2}\nabla u}{\delta^p(x)} \right](\nabla u_n-\nabla u) dx   \notag\\
    & - \int_\Om \left[\abs{u_n}^{q-2}u_n-\abs{u}^{q-2}u\right](u_n-u) dx  \;\;\to\;0\;\; \text{when}\;\;n\to\infty.
\end{align}
We prove now that $(u_n)_n$ converges strongly to $u$ in $\W$, i.e. 
\begin{equation}
    ||u_n-u||_{\W}^p = \int_\Om \abs{\Delta u_n-\Delta u}^p dx \;\;\to\; 0, \;\; \text{when} \; n\to\infty.    \notag
\end{equation}
The last term in \eqref{diffI'} can be estimated by the Hölder's inequality as follows:
\begin{align}    \label{3rd-term}
    \Bigg|  \int_\Om \left[\abs{u_n}^{q-2}u_n-\abs{u}^{q-2}u\right]& (u_n-u) dx \Bigg|    \leq \left|  \int_\Om \abs{u_n}^{q-2}u_n(u_n-u) dx \right| + \left| \int_\Om \abs{u}^{q-2}u(u_n-u) dx \right|   \notag\\
    & \leq \left(\int_\Om |u_n|^q\right)^\frac{q-1}{q}  \left(\int_\Om |u_n-u|^q\right)^\frac{1}{q} +  \left(\int_\Om |u|^q\right)^\frac{q-1}{q} \left(\int_\Om |u_n-u|^q\right)^\frac{1}{q}  \notag\\
    &\;\;\to\; 0 \;\; \text{when}\; n\to\infty,  
\end{align}
since $u_n\to u$ in $L^q(\Om)$ by \eqref{Lr-conv}. Recalling that $\lmb\leq 0$ we can conclude, by \eqref{diffI'}, that
\begin{equation}
    \int_\Om \left[\abs{\Delta u_n}^{p-2}\Delta u_n - \abs{\Delta u}^{p-2}\Delta u\right](\Delta u_n-\Delta u) dx
    - \int_\Om \left[\abs{u_n}^{q-2}u_n-\abs{u}^{q-2}u\right](u_n-u) dx  \;\;\stackrel{n\to\infty}{\longrightarrow}\;0.    \notag
\end{equation}

Combining this with \eqref{3rd-term}, we get that
\begin{equation}     \label{convg-lapl}
     \int_\Om \left[\abs{\Delta u_n}^{p-2}\Delta u_n - \abs{\Delta u}^{p-2}\Delta u\right](\Delta u_n-\Delta u) dx \;\;\to\;0 \;\;\text{when}\; n\to\infty.
\end{equation}
For $p\geq2$, using \eqref{ineqlindqvist>2}, we have 
\begin{equation}
    \int_\Om \abs{\Delta u_n-\Delta u}^p dx \leq 2^{p-2} \int_\Om \left[\abs{\Delta u_n}^{p-2}\Delta u_n - \abs{\Delta u}^{p-2}\Delta u\right](\Delta u_n-\Delta u) dx,   \notag
\end{equation}
which converges to $0$ by \eqref{convg-lapl}, so $u_n$ converges strongly to $u$ in $\W$ for $p\geq 2$.\\
For $1<p<2$, we use Hölder's inequality (with $\frac{2}{p}$ and $\frac{2}{2-p}$) and we get
\begin{align}
    \int_\Om \abs{\Delta u_n-\Delta u}^p dx & = \int_\Om \frac{\abs{\Delta u_n-\Delta u}^p}{(1+\abs{\Delta u_n}^2+\abs{\Delta u}^2)^\frac{p(2-p)}{4}} (1+\abs{\Delta u_n}^2+\abs{\Delta u}^2)^\frac{p(2-p)}{4}dx   \notag\\
    & \leq \left( \int_\Om \frac{\abs{\Delta u_n-\Delta u}^2}{(1+\abs{\Delta u_n}^2+\abs{\Delta u}^2)^\frac{2-p}{2}} dx \right)^\frac{p}{2} \left( \int_\Om (1+\abs{\Delta u_n}^2+\abs{\Delta u}^2)^\frac{p}{2} dx \right)^\frac{2-p}{2}.   \notag
\end{align}
By raising to the power $\frac{2}{p}$ and applying \eqref{ineqlindqvist<2} we obtain
\begin{align}
    \|&u_n-u\|_{\W}^2  =  \left(\int_\Om \abs{\Delta u_n-\Delta u}^p dx \right)^{\frac{2}{p}}    \notag\\
    &\leq \int_\Om \frac{\abs{\Delta u_n-\Delta u}^2}{(1+\abs{\Delta u_n}^2+\abs{\Delta u}^2)^\frac{2-p}{2}} dx  \;\left( \int_\Om (1+\abs{\Delta u_n}^2+\abs{\Delta u}^2)^\frac{p}{2} dx \right)^\frac{2-p}{p}   \notag\\
    & \leq \frac{1}{p-1} \int_\Om \left[\abs{\Delta u_n}^{p-2}\Delta u_n - \abs{\Delta u}^{p-2}\Delta u\right](\Delta u_n-\Delta u) dx  \;\left( \int_\Om (1+\abs{\Delta u_n}^2+\abs{\Delta u}^2)^\frac{p}{2} dx \right)^\frac{2-p}{p},  \notag
\end{align}
which again converges to $0$ by \eqref{convg-lapl}, so $u_n$ converges strongly to $\W$ also for $1<p<2$. Hence, the lemma is proved.
\end{proof}

\begin{proof}[Proof of Theorem \ref{thmExistence}-2]
    By the lemmas above, all the hypotheses of Theorem \eqref{thmMP} are satisfied. Hence, there exists a critical point $\varphi_\lmb$, characterized by
    $$I_\lmb[\varphi_\lmb] = \inf_{\gamma\in\Gamma} \max_{t\in [0,1]} I_\lmb[\gamma(t)] \;\;\; \text{and}\;\;\; I_
    \lmb'[\varphi_\lmb]=0.$$
    Therefore, $\varphi_\lmb$ is a non-trivial weak solution to the problem \eqref{ec:systemP}. 
\end{proof}

\section{Non-existence of non-trivial solutions of \eqref{ec:systemP}}
We prove the nonexistence via a Pohozaev-type identity.
\begin{lemma}
    Let $u$ be a solution of \eqref{ec:systemP}. Then the following identity holds:
    \begin{align}    \label{pohozaev-0}
    \left(\frac{p-1}{p}\right) & \int_{\prt\Om} \abs{\Delta u}^{p} (x\cdot \n) \;d\sigma + \left(\frac{N-2p}{p}\right) \int_\Om \abs{\Delta u}^p dx - \lmb\int_\Om \frac{\abs{\nabla u}^p}{\delta^{p+1}} (x\cdot\n(N(x))) dx \notag\\
    & = \lmb\left(\frac{N-p}{p}\right) \int_\Om \frac{\abs{\nabla u}^p}{\delta^p} dx + \frac{N}{q} \int_\Om |u|^q dx,
\end{align}
    where $\n$ is the outward normal at $\prt\Om$.
\end{lemma}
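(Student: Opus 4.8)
The plan is the classical Pohozaev scheme. I would multiply the equation in \eqref{ec:systemP} by the field $x\cdot\nabla u$, integrate over $\Om$ (rigorously: over the smooth subdomains $\Om_\eps:=\{x\in\Om\;|\;\delta(x)>\eps\}$, then let $\eps\to0$), and reduce each of the three resulting terms by integration by parts, keeping careful track of the surface contributions. Two preliminary observations organise the calculation. First, the clamped conditions $u=\prt u/\prt\n=0$ on $\prt\Om$ force the full gradient to vanish there, $\nabla u=0$ on $\prt\Om$; hence $x\cdot\nabla u=0$ on $\prt\Om$, $\hess u=(\Delta u)\,(\n\otimes\n)$ on $\prt\Om$, and consequently $\prt_\n(x\cdot\nabla u)=(\Delta u)(x\cdot\n)$ on $\prt\Om$. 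Second, by \eqref{ec:derivate1} and the Eikonal equation $\abs{\nabla\delta}=1$ the vector $\nabla\delta(x)$ is the interior unit normal to $\prt\Om$ at the near point $N(x)$, so $\n(N(x))=-\nabla\delta(x)$; this is what turns $x\cdot\nabla\delta$ into $x\cdot\n(N(x))$ at the end.

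For the $p$-biharmonic term I would put $w:=\abs{\Delta u}^{p-2}\Delta u$ and apply Green's identity twice, using the classical identity $\Delta(x\cdot\nabla u)=2\Delta u+x\cdot\nabla(\Delta u)$ and the pointwise relation $w\,(x\cdot\nabla\Delta u)=\tfrac1p\,x\cdot\nabla(\abs{\Delta u}^p)$; a further integration by parts of $\tfrac1p\int_\Om x\cdot\nabla(\abs{\Delta u}^p)\,dx$ then produces $\big(\tfrac{2p-N}{p}\big)\int_\Om\abs{\Delta u}^p\,dx$ together with the single surviving surface term $-\int_{\prt\Om}w\,\prt_\n(x\cdot\nabla u)\,d\sigma=-\tfrac{p-1}{p}\int_{\prt\Om}\abs{\Delta u}^p(x\cdot\n)\,d\sigma$ (all other surface terms vanish because $x\cdot\nabla u=0$ on $\prt\Om$). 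Thus $\int_\Om\Delta^2_p u\,(x\cdot\nabla u)\,dx=\big(\tfrac{2p-N}{p}\big)\int_\Om\abs{\Delta u}^p\,dx-\tfrac{p-1}{p}\int_{\prt\Om}\abs{\Delta u}^p(x\cdot\n)\,d\sigma$.

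For the singular term I would integrate by parts once (the surface term vanishing since $x\cdot\nabla u=0$ on $\prt\Om$), obtaining $-\lmb\int_\Om\delta^{-p}\abs{\nabla u}^{p-2}\nabla u\cdot\nabla(x\cdot\nabla u)\,dx$; splitting $\nabla(x\cdot\nabla u)=\nabla u+(\hess u)\,x$, the first piece gives $-\lmb\int_\Om\abs{\nabla u}^p\delta^{-p}\,dx$ and the second, via $\abs{\nabla u}^{p-2}\nabla u\cdot((\hess u)\,x)=\tfrac1p\,x\cdot\nabla(\abs{\nabla u}^p)$ followed by an integration by parts with $\dvg(x\,\delta^{-p})=N\delta^{-p}-p\,(x\cdot\nabla\delta)\,\delta^{-p-1}$, gives $\tfrac{N\lmb}{p}\int_\Om\abs{\nabla u}^p\delta^{-p}\,dx-\lmb\int_\Om\abs{\nabla u}^p\delta^{-p-1}(x\cdot\nabla\delta)\,dx$; hence the singular term equals $\lmb\tfrac{N-p}{p}\int_\Om\abs{\nabla u}^p\delta^{-p}\,dx-\lmb\int_\Om\abs{\nabla u}^p\delta^{-p-1}(x\cdot\nabla\delta)\,dx$. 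For the right-hand side, $\abs{u}^{q-2}u\,(x\cdot\nabla u)=\tfrac1q\,x\cdot\nabla(\abs{u}^q)$ and, since $u=0$ on $\prt\Om$, it integrates to $-\tfrac Nq\int_\Om\abs{u}^q\,dx$. Adding the three relations, using $x\cdot\nabla\delta=-x\cdot\n(N(x))$, multiplying through by $-1$, and transposing the term $\lmb\tfrac{N-p}{p}\int_\Om\abs{\nabla u}^p\delta^{-p}\,dx$ to the right-hand side, yields exactly \eqref{pohozaev-0}.

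The main obstacle is not the algebra but the rigorous justification of the boundary manipulations. The field $x\cdot\nabla u$ is not an admissible element of $\W$, and the integrations by parts involving the weight $\delta^{-p}$ produce, a priori, ill-defined $0\cdot\infty$ surface terms on $\prt\Om$; moreover the integrals $\int_\Om\abs{\nabla u}^p\delta^{-p-1}(x\cdot\n(N(x)))\,dx$ and $\int_\Om\abs{\nabla u}^p\delta^{-p}\,dx$ must be shown to be finite. I would handle all of this on the exhausting domains $\Om_\eps$ (smooth away from $\ridge$, by Section 3): the $\delta^{-p}$-weighted surface integrals on $\{\delta=\eps\}$ are of the form $\eps^{-p}\int_{\{\delta=\eps\}}\abs{\nabla u}^p(\cdots)\,d\sigma$, and one checks from the equation itself that a solution satisfies $\abs{\nabla u}=o(\delta)$ near $\prt\Om$ when $\lmb\neq0$ (such terms being simply absent when $\lmb=0$), so they tend to $0$ as $\eps\to0$; the required integrability of the bulk terms is supplied by the Hardy--Rellich inequality \eqref{HR} together with the regularity of $\delta$ and of $u$, and one then passes to the limit by dominated convergence, leaving only the $\prt\Om$-term already isolated from the biharmonic part.
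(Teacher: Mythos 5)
Your calculation reproduces the paper's Pohozaev identity correctly, but with two genuine organizational differences. For the $p$-biharmonic term the paper invokes a ready-made Rellich--Pohozaev identity of Mitidieri (their display \eqref{mitidieri}) applied with $v=\abs{\Delta u}^{p-2}\Delta u$; you instead carry out the two Green's integrations by hand, using $\Delta(x\cdot\nabla u)=2\Delta u+x\cdot\nabla(\Delta u)$ together with the clamped boundary relation $\hess u=(\Delta u)\,\n\otimes\n$ on $\prt\Om$, which is what makes the surviving surface term $-\tfrac{p-1}{p}\int_{\prt\Om}\abs{\Delta u}^p(x\cdot\n)\,d\sigma$ appear transparently. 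Your route is more self-contained; the paper's is shorter because the bookkeeping is delegated to the cited lemma. For the singular middle term the algebra is slightly different in form (you integrate by parts against $\dvg(x\,\delta^{-p})=N\delta^{-p}-p\,\delta^{-p-1}(x\cdot\nabla\delta)$ directly, whereas the paper sets up and solves a linear equation for $\int\delta^{-p}\abs{\nabla u}^{p-2}(x\cdot\nabla\abs{\nabla u}^2)\,dx$), but the outcome coincides. The most substantive difference is in the treatment of rigour: you exhaust $\Om$ by $\Om_\eps=\{\delta>\eps\}$ and argue the $\{\delta=\eps\}$ contributions vanish, while the paper regularizes the test field $x\cdot\nabla u$ by convolution and near-boundary truncation. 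Both are sketches. One caution on your version: you assert that the equation forces $\abs{\nabla u}=o(\delta)$ near $\prt\Om$ when $\lmb\neq0$. This is plausible (a balance of leading orders in the equation does force $\Delta u=0$ on $\prt\Om$ for a sufficiently smooth solution, giving $\abs{\nabla u}=O(\delta^2)$), but it is not obvious for a mere weak solution, and a generic clamped solution without the singular potential satisfies only $\abs{\nabla u}=O(\delta)$, for which the surface terms at $\{\delta=\eps\}$ would be $O(1)$ rather than $o(1)$; so this step would need to be made precise for the exhaustion argument to close.
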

 
\begin{proof}
    We multiply the equation in \eqref{ec:systemP} by $(x\cdot\nabla u)$ and integrate on $\Om$:
    \begin{equation}  \label{pohozaev-1}
        \int_\Om \Delta(\abs{\Delta u}^{p-2}\Delta u) (x\cdot\nabla u) dx + \lmb \int_\Om \dvg\left(\frac{\abs{\nabla u}^{p-2}\nabla u}{\delta^p}\right) (x\cdot\nabla u) dx = \int_\Om |u|^{q-2}u (x\cdot\nabla u) dx.
    \end{equation}
    Since $u=0$ on $\prt\Om$, the last integral is easily computed as:
    \begin{equation}   \label{int-poho-1}
        \int_\Om |u|^{q-2}u (x\cdot\nabla u) dx = \frac{1}{q} \int_\Om x\cdot \nabla(|u|^q) dx = -\frac{N}{q} \int_\Om |u|^q dx.     
    \end{equation}
    In order to compute the first integral in \eqref{pohozaev-1}, we use the next identity, which was proved in \cite[Proposition 2.1]{mitidieri}:
    \begin{align}  \label{mitidieri}
        \int_\Om \Big\{ \Delta v (x\cdot\nabla u) + \Delta u (x\cdot\nabla v) \Big\} dx & = \int_{\prt\Om} \Big\{ \frac{\prt v}{\prt \n} (x\cdot\nabla u) + \frac{\prt u}{\prt \n} (x\cdot\nabla v) - (\nabla u \cdot \nabla v)(x\cdot\n) \Big\} \;d\sigma  \notag\\
        & + (N-2) \int_\Om \nabla u\cdot \nabla v dx, \;\;\;\; \text{for any}\;\; u,v \in \W.
    \end{align}
We apply this for $v=\abs{\Delta u}^{p-2}\Delta u \stackrel{not}{=}w$ and the first integral in \eqref{pohozaev-1} becomes
\begin{align}    \label{pohozaev-2}
     \int_\Om \Delta(\abs{\Delta u}^{p-2}\Delta u) (x\cdot\nabla u) dx & = \int_{\prt\Om} \Big\{ \frac{\prt w}{\prt \n} (x\cdot\nabla u) + \frac{\prt u}{\prt \n} (x\cdot\nabla w) - (\nabla u \cdot \nabla w)(x\cdot\n) \Big\} \;d\sigma  \notag\\
    & - \int_\Om \Delta u (x\cdot\nabla w) dx + (N-2) \int_\Om \nabla u\cdot \nabla w dx. 
\end{align}
Due to $u=\frac{\prt u}{\prt\n}=0$ on $\prt\Om$ and $\nabla u=\frac{\prt u}{\prt \n}\;\n$ on the boundary, then the gradient of $u$ is zero on the boundary. Thus, the boundary integrals above all vanish. By the divergence theorem,
\begin{align}
    \int_\Om \Delta u (x\cdot\nabla w) dx 
    & =  \int_{\prt\Om} \abs{\Delta u}^{p} (x\cdot \n) \;d\sigma - N\int_\Om \abs{\Delta u}^p dx - \frac{1}{p}\int_\Om x \cdot \nabla\left(\abs{\Delta u}^p\right) \; dx     \notag\\
    & = \left(\frac{p-1}{p}\right) \int_{\prt\Om} \abs{\Delta u}^{p} (x\cdot \n) \;d\sigma - \frac{N(p-1)}{p} \int_\Om \abs{\Delta u}^p.   \notag
\end{align}
Using that $\frac{\prt u}{\prt\n}=0$ on $\prt\Om$ and Green's formula, the last integral in \eqref{pohozaev-2} is
\begin{equation}
    \int_\Om \nabla u\cdot \nabla v dx = - \int_\Om \abs{\Delta u}^p dx.   \notag
\end{equation}
Combining all of the above in \eqref{pohozaev-2} we get that 
\begin{equation}    \label{int-poho-2}
    \int_\Om \Delta(\abs{\Delta u}^{p-2}\Delta u) (x\cdot\nabla u) dx = -\left(\frac{p-1}{p}\right) \int_{\prt\Om} \abs{\Delta u}^{p} (x\cdot \n) \;d\sigma - \left(\frac{N-2p}{p}\right) \int_\Om \abs{\Delta u}^p dx.   
\end{equation}
We are left with the middle integral in \eqref{pohozaev-1}:
\begin{align}
    \int_\Om \dvg & \left(\frac{\abs{\nabla u}^{p-2}\nabla u}{\delta^p} \right) (x\cdot\nabla u) dx = \int_{\prt\Om} \frac{\abs{\nabla u}^{p-2}}{\delta^p} (x\cdot\nabla u) (\nabla u \cdot \n) \;d\sigma - \int_\Om \frac{\abs{\nabla u}^{p-2}}{\delta^p} \Big(\nabla u\cdot \nabla(x\cdot\nabla u)\Big) dx  \notag\\
    & = - \int_\Om \frac{\abs{\nabla u}^p}{\delta^p}  dx - \frac{1}{2}\int_\Om \frac{\abs{\nabla u}^{p-2}}{\delta^p} \Big(x\cdot \nabla(\abs{\nabla u}^2)\Big) dx,   \notag
\end{align}
where we use again that $\nabla u\cdot \n = \frac{\prt u}{\prt\n}=0$ on $\prt\Om$. Since $u=0$ is constant on the boundary, we know that $\abs{\nabla u} = \abs{\frac{\prt u}{\prt\n}} \abs{\n} = 0$. Hence,
\begin{align}
    \int_\Om \frac{\abs{\nabla u}^{p-2}}{\delta^p(x)} \Big(x\cdot \nabla(\abs{\nabla u}^2)\Big) dx & = - N\int_\Om \frac{\abs{\nabla u}^p}{\delta^p(x)} dx -\frac{p-2}{2} \int_\Om \frac{\abs{\nabla u}^{p-2}}{\delta^p(x)} \Big(x\cdot \nabla(\abs{\nabla u}^2)\Big) dx   \notag\\& + p\int_\Om \frac{\abs{\nabla u}^p}{\delta^{p+1}(x)} (x\cdot\nabla\delta) dx \notag\\
    \Rightarrow \int_\Om \frac{\abs{\nabla u}^{p-2}}{\delta^p(x)} \Big(x\cdot \nabla(\abs{\nabla u}^2)\Big) dx & = - \frac{2N}{p} \int_\Om \frac{\abs{\nabla u}^p}{\delta^p(x)} dx + 2 \int_\Om \frac{\abs{\nabla u}^p}{\delta^{p+1}(x)} (x\cdot\nabla\delta) dx.
\end{align}
Taking into account that $\nabla\delta(x) = -\n(N(x))$, where $N(x)$ is the closest point to $x$ on the boundary (according to Section 3), we are left with 
\begin{align}    \label{int-poho-3}
    \int_\Om \dvg \left(\frac{\abs{\nabla u}^{p-2}\nabla u}{\delta^p} \right) (x\cdot\nabla u) dx & = \left(\frac{N-p}{p}\right) \int_\Om \frac{\abs{\nabla u}^p}{\delta^p} dx + \int_\Om \frac{\abs{\nabla u}^p}{\delta^{p+1}} (x\cdot\n(N(x))) dx.
\end{align}
By substituting \eqref{int-poho-1}-\eqref{int-poho-3} in \eqref{pohozaev-1} we get
\begin{align}
    -&\left(\frac{p-1}{p}\right) \int_{\prt\Om} \abs{\Delta u}^{p} (x\cdot \n) \;d\sigma - \left(\frac{N-2p}{p}\right) \int_\Om \abs{\Delta u}^p dx +\lmb \int_{\prt\Om} \frac{\abs{\nabla u}^{p-2}}{\delta^p} (x\cdot\nabla u) (\nabla u \cdot \n) \;d\sigma  \notag\\
    & - \frac{\lmb}{p}\int_{\prt\Om} \frac{\abs{\nabla u}^p}{\delta^p} (x\cdot \n)\;d\sigma + \lmb\left(\frac{N-p}{p}\right) \int_\Om \frac{\abs{\nabla u}^p}{\delta^p} dx - \lmb\int_\Om \frac{\abs{\nabla u}^p}{\delta^{p+1}} (x\cdot\nabla\delta) dx + \frac{N}{q} \int_\Om |u|^q dx =0
\end{align}
Rearranging, 
\begin{align}  
    \left(\frac{p-1}{p}\right) & \int_{\prt\Om} \abs{\Delta u}^{p} (x\cdot \n) \;d\sigma + \left(\frac{N-2p}{p}\right) \int_\Om \abs{\Delta u}^p dx - \lmb\int_\Om \frac{\abs{\nabla u}^p}{\delta^{p+1}} (x\cdot\n(N(x))) dx \notag\\
    & = \lmb\left(\frac{N-p}{p}\right) \int_\Om \frac{\abs{\nabla u}^p}{\delta^p} dx + \frac{N}{q} \int_\Om |u|^q dx.
\end{align}
\end{proof}
While the regularity needed to rigorously justify the preceding computations is not guaranteed a priori for weak solutions, we can extend the result for the weak formulation via an approximation argument. This is done by starting from the variational formulation of the weak solution. First, we should regularize the test function $x\cdot \nabla u$ by convolution with a $C^\infty-$function and then truncating it near the boundary, allowing us to apply integration by parts. Passing to the limit in the end, we get the result.

\begin{proof}[Proof of Theorem \ref{thmNonExistence}]
Assume there exists $u$ a non-trivial solution of \eqref{ec:systemP}. Multiplying the equation in \eqref{ec:systemP} by $u$ and integrating by parts we get the identity
\begin{equation}   \notag
    \int_\Om \abs{\Delta u}^p dx = \lmb\int_\Om \frac{\abs{\nabla u}^p}{\delta^p} dx + \int_\Om |u|^q dx.
\end{equation}
Substituting in \eqref{pohozaev-0} we get 
\begin{align}    \notag
    \left(\frac{p-1}{p}\right) \int_{\prt\Om} \abs{\Delta u}^{p} (x\cdot \n) \;d\sigma - \lmb\int_\Om \frac{\abs{\nabla u}^p}{\delta^{p+1}} (x\cdot\n(N(x))) dx & =   \lmb \int_\Om \frac{\abs{\nabla u}^p}{\delta^p} dx \notag\\
    & + \left(\frac{N}{q}-\frac{N-2p}{p}\right) \int_\Om |u|^q dx.
\end{align}
Since $\Om$ is the unit ball and $\delta(x)=1-|x|$, then for $x\in \Om\setminus\{0\}$ we have $\n(N(x))= - \nabla\delta(x)=\frac{x}{|x|}$, so $x\cdot \n(N(x)) = |x| \geq 0$. Considering that $\lmb<0$ and $q\geq \frac{N-2p}{Np}$ or $\lmb\leq0$ and $q> \frac{N-2p}{Np}$, we conclude that $u\equiv0$ in $\Om$, which is a contradiction.
\end{proof}

\section{Appendix}
\begin{proof} [Proof of Proposition \ref{prop1}.]
    When $p>2$ notice that, using \eqref{ec:hessiannorm},
    \begin{equation}    \notag
         \|\hess u\|_{L^p}^2 \;=\; \left( \int_{\R^N} \abs{\hess u}^p dx \right)^{\frac{2}{p}} = \left( \int_{\R^N} \abs{\sum_{i,j=1}^N u_{x_ix_j}^2}^{\frac{p}{2}} dx \right)^{\frac{2}{p}}  = \left\| \sum_{i,j=1}^N u_{x_ix_j}^2 \right\|_{L^{\frac{p}{2}}(\R^N)}.
    \end{equation}    
    We apply Minkowski inequality and \eqref{xixj-bound} and we get
    \begin{equation}   \notag
          \|\hess u\|_{L^p}^2 \; \leq \; \sum_{i,j=1}^N \|u_{x_ix_j}^2\|_{L^{\frac{p}{2}}(\R^N)} \; = \; \sum_{i,j=1}^N \|u_{x_ix_j}\|_{L^p(\R^N)}^2   \;\leq\; N^2 \cot^4 \left( \frac{\pi}{2p} \right) \|\Delta u\|_{L^p(\R^N)}^2. 
    \end{equation}
    Raising the last inequality to the power $\frac{p}{2}$, we get \eqref{hess-lap}. For $1<p<2$ we have that 
    \begin{align}  \notag
        \int_{\R^N} \abs{Hu}^p dx & = \int_{\R^N} \abs{\sum_{i,j=1}^N u_{x_ix_j}^2}^\frac{p}{2} dx \leq \int_{\R^N} \sum_{i,j=1}^N u_{x_ix_j}^p = \sum_{i,j=1}^N \|u_{x_ix_j}\|_{L^p(\R^N)}^p    \notag\\
        &\leq N^p \cot^{2p}\left( \frac{\pi}{2p'} \right) \|\Delta u\|_{L^p(\R^N)}^p.
    \end{align}
\end{proof}

\begin{lemma}    \label{lemma-1}
    If $p\geq 2$ it holds
\begin{equation}
    \abs{a+b}^p - \abs{a}^p \leq \frac{p(p-1)}{2}\Big(\abs{a}+\abs{b}\Big)^{p-2} \abs{b}^2 + p \abs{a}^{p-2}a\cdot b\;,\;\; \forall\;a,b\in\R^N.    \label{ec:ineqshafrir1}
\end{equation}
If $1<p<2$ there exists a constant $\gamma=\gamma_p$ such that
\begin{equation}
    \abs{a+b}^p - \abs{a}^p \leq \gamma_p\abs{b}^p + p \abs{a}^{p-2}a\cdot b \;,\;\; \forall a,b\in\R^N.    \label{ec:ineqshafrir2}
\end{equation}
If $0<p\leq1$ it holds
\begin{equation}     \label{ec:ineqshafrir3}
    |a+b|^p - |a|^p \leq |b|^p \;,\;\; \forall\; a,b\in\R^N. 
\end{equation}
\end{lemma}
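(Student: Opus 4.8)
The plan is to treat the three ranges of $p$ separately, in each case expanding the convex function $\Phi(x):=\abs{x}^p$ to first order along the segment $t\mapsto a+tb$, $t\in[0,1]$, and controlling the second-order integral remainder; I set $f(t):=\Phi(a+tb)=\abs{a+tb}^p$ throughout.

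For \eqref{ec:ineqshafrir1} ($p\ge 2$), note first that $\Phi\in C^2(\R^N)$ in this range, so Taylor's formula gives $f(1)-f(0)=f'(0)+\int_0^1(1-t)f''(t)\,dt$. A direct computation yields $f'(0)=p\abs{a}^{p-2}a\cdot b$ and $f''(t)=p\abs{a+tb}^{p-4}\!\left[(p-2)\big((a+tb)\cdot b\big)^2+\abs{a+tb}^2\abs{b}^2\right]$. Since $p-2\ge 0$, Cauchy--Schwarz ($\big((a+tb)\cdot b\big)^2\le\abs{a+tb}^2\abs{b}^2$) gives $f''(t)\le p(p-1)\abs{a+tb}^{p-2}\abs{b}^2\le p(p-1)(\abs{a}+\abs{b})^{p-2}\abs{b}^2$, the last inequality because $\abs{a+tb}\le\abs{a}+\abs{b}$ and the exponent is nonnegative. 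Integrating against the weight $(1-t)$ supplies the factor $1/2$, and \eqref{ec:ineqshafrir1} follows. The only delicate point is that $f''$ may fail to exist at an isolated $t$ with $a+tb=0$; this is a null set and $f\in C^1$ with absolutely continuous derivative, so the identity still holds.

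For \eqref{ec:ineqshafrir2} ($1<p<2$), $\Phi$ is only $C^1$ — its Hessian blows up at the origin — so I would distinguish two regimes according to the relative size of $\abs{a}$ and $\abs{b}$. If $\abs{b}\ge\tfrac12\abs{a}$, brute force suffices: $\abs{a+b}^p\le(\abs{a}+\abs{b})^p\le 3^p\abs{b}^p$, $-\abs{a}^p\le 0$, and $-p\abs{a}^{p-2}a\cdot b\le p\abs{a}^{p-1}\abs{b}\le p\,2^{p-1}\abs{b}^p$, so \eqref{ec:ineqshafrir2} holds with $\gamma_p=3^p+p\,2^{p-1}$. If $\abs{b}<\tfrac12\abs{a}$, then $\abs{a+tb}\ge\abs{a}-\abs{b}>\tfrac12\abs{a}>\abs{b}>0$ for all $t\in[0,1]$, so $\Phi$ is $C^2$ on a neighbourhood of the segment and the same Taylor identity applies; now $p-2<0$ lets us discard the cross term, so $f''(t)\le p\abs{a+tb}^{p-2}\abs{b}^2\le p\abs{b}^{p-2}\abs{b}^2=p\abs{b}^p$, whence $f(1)-f(0)-f'(0)\le\tfrac p2\abs{b}^p$. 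Taking $\gamma_p$ to be the larger of the two constants closes this case.

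Finally, \eqref{ec:ineqshafrir3} ($0<p\le 1$) needs no calculus: $t\mapsto t^p$ is increasing on $[0,\infty)$ and subadditive there (since $s\mapsto(s+t)^p-s^p$ is nonincreasing, hence $\le t^p$), so $\abs{a+b}^p\le(\abs{a}+\abs{b})^p\le\abs{a}^p+\abs{b}^p$, and rearranging gives the stated inequality. The main obstacle is precisely the loss of $C^2$ regularity of $\abs{x}^p$ at the origin when $p<2$: the threshold $\abs{b}\lessgtr\tfrac12\abs{a}$ is chosen so that, in the hard regime, the relevant segment stays uniformly away from the singular point and the Taylor remainder estimate goes through, while in the complementary regime crude bounds are enough.
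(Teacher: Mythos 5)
Your proof is correct, and it actually does more than the paper, which for \eqref{ec:ineqshafrir1} and \eqref{ec:ineqshafrir2} only cites Shafrir's appendix without reproducing an argument and proves only \eqref{ec:ineqshafrir3} in-line (via $(s+t)^p\le s^p+t^p$ for $0<p<1$). Your Taylor-with-integral-remainder computation for $p\ge 2$ is clean and the constant $\tfrac{p(p-1)}{2}$ falls out exactly; your two-regime split at $|b|\lessgtr\tfrac12|a|$ for $1<p<2$ correctly quarantines the segment away from the origin (where the Hessian of $|x|^p$ blows up) and the crude bound in the complementary regime is legitimate. One small point: for $p\ge 2$ the caveat about an isolated $t$ with $a+tb=0$ is unnecessary — the map $x\mapsto |x|^p$ is genuinely $C^2$ on all of $\R^N$ when $p\ge 2$ (the candidate second derivatives $p(p-2)|x|^{p-4}x_ix_j+p|x|^{p-2}\delta_{ij}$ extend continuously by $0$ to the origin when $p>2$, and are constant when $p=2$), so the classical Taylor identity applies with no exceptional set. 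Also worth noting explicitly that for $1<p<2$ and $a=0$ the term $p|a|^{p-2}a\cdot b$ is to be read as $0$ (the gradient of $|x|^p$ at the origin), which is consistent with your case-1 bound. Your proof of \eqref{ec:ineqshafrir3} via monotonicity of $s\mapsto(s+t)^p-s^p$ is the same subadditivity fact the paper invokes, just spelled out.
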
   
\begin{proof}
    The inequalities \eqref{ec:ineqshafrir1} and \eqref{ec:ineqshafrir2} can be found in \cite[Appendix]{shafrir}. For the proof of \eqref{ec:ineqshafrir3}, we apply the first elementary inequality in \eqref{ineqshafrir4}
    \begin{equation}
        (a+b)^p\leq a^p+b^p\;\text{for} \;\;a,b\in\R_+, \;\;0<p<1  \;\;\;\text{and}\;\;\; (a+b)^p\leq 2^{p-1}(a^p+b^p)\;\text{for} \;\;a,b\in\R_+,\;\; p>1.  \label{ineqshafrir4}
    \end{equation}
\end{proof}

We also recall the next useful inequality for vectors in $\R^N$, from \cite[Chapter 10]{lindqvist}.
\begin{lemma}   \label{lemma-3}
    If $p\geq 2$ it holds:
    \begin{equation}    \label{ineqlindqvist>2}
        \left(|a|^{p-2}a-|b|^{p-2}b\right)\cdot(a-b) \geq 2^{2-p}|a-b|^p, \;\; \forall a,b\in \R^N.
    \end{equation}
    If $1<p<2$ it holds:
    \begin{equation}    \label{ineqlindqvist<2}
        \left(|a|^{p-2}a-|b|^{p-2}b\right)\cdot(a-b) \geq (p-1)\frac{\abs{a-b}^2}{(1+\abs{a}^2+\abs{b}^2)^\frac{2-p}{2}}, \;\;\forall a,b\in\R^N.
    \end{equation}
\end{lemma}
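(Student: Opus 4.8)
The plan is to derive both inequalities from a single integral identity. Since $|a|^{p-2}a=\nabla F(a)$ with $F(a)=\tfrac1p|a|^p$ convex, set $v(t):=b+t(a-b)$ for $t\in[0,1]$; differentiating $t\mapsto|v(t)|^{p-2}v(t)$ and applying the fundamental theorem of calculus gives
\[
\big(|a|^{p-2}a-|b|^{p-2}b\big)\cdot(a-b)=\int_0^1\Big[|v(t)|^{p-2}|a-b|^2+(p-2)\,|v(t)|^{p-4}\big(v(t)\cdot(a-b)\big)^2\Big]dt .
\]
For $p>1$ the map $t\mapsto|v(t)|^{p-2}v(t)$ is absolutely continuous even when $0\in[b,a]$, so this identity is legitimate in general; alternatively one first assumes $0\notin[b,a]$ (so everything is $C^1$) and then lets $a,b$ vary, using that both sides of the claimed bounds are continuous.

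For $1<p<2$ the argument is short. Here $p-2<0$, and Cauchy--Schwarz gives $\big(v\cdot(a-b)\big)^2\le|v|^2|a-b|^2$, so the correction term is $\ge(p-2)|v|^{p-2}|a-b|^2$ and the whole integrand is $\ge(p-1)|v(t)|^{p-2}|a-b|^2$. Because $t\mapsto|v(t)|$ is convex on $[0,1]$ it attains its maximum at an endpoint, hence $|v(t)|\le\max\{|a|,|b|\}\le(1+|a|^2+|b|^2)^{1/2}$ for every $t$; since the exponent $p-2$ is negative this yields $|v(t)|^{p-2}\ge(1+|a|^2+|b|^2)^{(p-2)/2}$ uniformly in $t$, and integrating gives exactly \eqref{ineqlindqvist<2}.

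For $p\ge2$ both summands in the integrand are nonnegative, but the sharp constant $2^{2-p}$ forces one to keep the correction term — dropping it produces a constant strictly smaller than $2^{2-p}$ as soon as $p>2$. I would complete the square, writing $|v(t)|^2=\rho^2+|a-b|^2(t-t_0)^2$, where $\rho$ is the distance from the origin to the line through $a$ and $b$ and $t_0$ is the minimizer of $|v|$ on $\R$. Substituting $w=|a-b|(t-t_0)$ one checks that the integrand equals $|a-b|\,\tfrac{d}{dw}\big[w(\rho^2+w^2)^{(p-2)/2}\big]$, so, with $g(w):=w(\rho^2+w^2)^{(p-2)/2}$, the left-hand side is $|a-b|\big(g(w(1))-g(w(0))\big)$ with $w(1)-w(0)=|a-b|$. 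If $t_0\in[0,1]$ then, by oddness of $g$, this equals $|a-b|\big(g(w_1)+g(w_2)\big)$ with $w_1,w_2\ge0$ and $w_1+w_2=|a-b|$; the pointwise bound $g(w_i)\ge w_i^{\,p-1}$ (valid since $p\ge2$ and $\rho^2\ge0$) together with the convexity of $r\mapsto r^{p-1}$, namely $w_1^{p-1}+w_2^{p-1}\ge2^{2-p}(w_1+w_2)^{p-1}$, yields \eqref{ineqlindqvist>2}. If $t_0\notin[0,1]$ the expression equals $|a-b|\big(g(\alpha+|a-b|)-g(\alpha)\big)$ with $\alpha\ge0$; since $g$ is convex on $[0,\infty)$, $\alpha\mapsto g(\alpha+|a-b|)-g(\alpha)$ is nondecreasing, so it is smallest at $\alpha=0$, where it is $|a-b|(\rho^2+|a-b|^2)^{(p-2)/2}\ge|a-b|^{p-1}$, which even beats $2^{2-p}|a-b|^{p-1}$.

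The main obstacle is the $p\ge2$ case: one cannot discard the positive correction term, and the remaining work is the slightly fiddly bookkeeping of the completed-square substitution together with the elementary scalar inequality $w_1^{p-1}+w_2^{p-1}\ge2^{2-p}(w_1+w_2)^{p-1}$; the range $1<p<2$, by contrast, is essentially immediate from the convexity of $|v(\cdot)|$. A cleaner-looking alternative starting point is the algebraic identity
\[
\big(|a|^{p-2}a-|b|^{p-2}b\big)\cdot(a-b)=\frac12\big(|a|^{p-2}+|b|^{p-2}\big)|a-b|^2+\frac12\big(|a|^{p-2}-|b|^{p-2}\big)\big(|a|^2-|b|^2\big),
\]
whose two right-hand summands are nonnegative for $p\ge2$; passing to polar coordinates it reduces the claim to a two-real-variable estimate, but this does not seem to shorten the $p\ge2$ argument appreciably. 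Both inequalities are classical and recorded in \cite{lindqvist}.
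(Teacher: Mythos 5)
Your proof is correct on both branches; I checked the integral identity, the Cauchy--Schwarz reduction for $1<p<2$, the convexity of $t\mapsto|v(t)|$, the completed-square substitution, the oddness and convexity of $g(w)=w(\rho^2+w^2)^{(p-2)/2}$ on $[0,\infty)$, and the scalar bound $w_1^{p-1}+w_2^{p-1}\ge 2^{2-p}(w_1+w_2)^{p-1}$ from Jensen. The one point worth flagging is that for $1<p<2$ the integrand can blow up where $v(t)=0$; you anticipate this by invoking absolute continuity, and the singularity $|t-t_0|^{p-2}$ is integrable for $p>1$, so the fundamental-theorem identity and your pointwise lower bound (which is trivially satisfied where the integrand is $+\infty$) both survive.

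Note, however, that the paper does not actually prove this lemma: it states it as a recalled fact and refers to Chapter 10 of Lindqvist's \emph{Notes on the $p$-Laplace equation}, giving no argument of its own. So the comparison here is between a full self-contained proof (yours) and a literature citation (the paper's). Your route — the integral representation of the monotonicity quantity, then Cauchy--Schwarz for $1<p<2$ and a completed-square change of variables plus a one-dimensional scalar inequality for $p\ge 2$ — is the standard way these vector inequalities are established, and it has the merit of making transparent where the sharp constant $2^{2-p}$ comes from (from $w_1^{p-1}+w_2^{p-1}\ge 2^{2-p}(w_1+w_2)^{p-1}$ with equality at $w_1=w_2$, i.e.\ when the segment $[b,a]$ is bisected by its closest point to the origin). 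In the paper's context that level of detail is unnecessary — the lemma is only used as a black box in the Palais--Smale verification — but as a freestanding proof of the stated lemma yours is complete and correct, and rightly identifies that dropping the correction term in the $p\ge 2$ branch would degrade the constant by a factor $1/(p-1)$, so the completed-square bookkeeping is genuinely needed to reach $2^{2-p}$.
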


\vspace{1cm}
\textbf{Acknowledgements:} The second author was partially supported by a doctoral scholarship provided by the University of Bucharest.


\begin{thebibliography}{} \scriptsize  
\bibitem{adams} R.A. Adams, J.J.F. Fournier; \textit{Sobolev spaces}, 2nd edition, 
Pure and Applied Mathematics, 140, Elsevier/Academic Press, Amsterdam, 2003.
\bibitem{ancona} A. Ancona; \textit{On strong barriers and an inequality of Hardy for domains in $\R^n$}, J. London Math. Soc., vol. 34, no. 2, pg. 274–290, 1986.
\bibitem{avkhadiev1} F.G. Avkhadiev; \textit{Selected results and open problems on Hardy–Rellich and
Poincaré–Friedrichs inequalities}, Anal. Math. Phys., vol. 11, no. 134, 2021.
\bibitem{avkhadiev2} F.G. Avkhadiev, A. Laptev; \textit{Hardy inequalities for nonconvex domains}, Around Research of Vladimir Maz’ya, Int. Math. Ser., 11, Springer, pp. 1–12, 2010.
\bibitem{balinsky1}  A. Balinsky, W. D. Evans, R. T. Lewis; \textit{The Analysis and Geometry of Hardy's Inequality}; Universitext, Springer, 2015.
\bibitem{balinsky2} A. Balinsky; W. D. Evans; R. T. Lewis; \textit{Hardy's inequality and curvature}; J. Funct. Anal.; vol. 262, pg. 648-666, 2012.
\bibitem{banuelos} R. Bañuelos, G. Wang; \textit{Sharp inequalities for martingales with applications to the Beurling–Ahlfors
and Riesz transformations}, Duke Math. J., vol. 80, pg. 575–600, 1995.
\bibitem{barbatis3} G. Barbatis; \textit{Best constants for higher-order Rellich inequalities in $L p(\Om)$}, Math. Z., vol. 255, pg. 877–896, 2007.
\bibitem{barbatis2} G. Barbatis, S. Filippas, A.Tertikas; \textit{A unified approach to improved $L^p$ Hardy inequalities with best constants}, Trans. Amer. Math. Soc., vol. 356, no. 6, pg. 2169-2196, 2003.
\bibitem{barbatis} G. Barbatis, A. Tertikas; \textit{On a class of Rellich inequalities}, J. of Comp. and Appl. Mathematics, vol. 194, no. 1, 2005, DOI: 10.1016/j.cam.2005.06.020.
\bibitem{bhakta} M. Bhakta; \textit{Entire solutions for a class of elliptic equations involving
$p$-biharmonic operator and Rellich potentials}, J. Math. Anal. Appl., vol 423, pg. 1570-1579, 2015.
\bibitem{bowman} F. Bowman; \textit{Introduction to Bessel functions}, Dover Publ., Inc., New York, 1958.
\bibitem{brezis-carte} H. Brezis; \textit{Funct. Anal., Sobolev spaces and partial differential equations}, Universitext, Springer, New York, 2011.
\bibitem{brezis-lieb} H. Brezis, E. Lieb; \textit{A relation between pointwise convergence of functions and convergence of functionals}, Proc. Amer. Math. Soc., vol. 88, no. 3, pg. 486-490, 1983.
\bibitem{brezis} H. Brezis, M. Marcus; \textit{Hardy's inequalities revisited}; Ann. Sc. Norm. Super. Pisa, Cl. Sci. $(4)$, vol. 25, no. 1-2, pg. 217-237, 1997.
\bibitem{cassano} B. Cassano, L. Cossetti, L. Fanelli; \textit{Improved Hardy-Rellich inequalities}, Comm. Pure Appl. Math., vol. 21, iss. 3, pg. 867-889, 2022.
\bibitem{cassano2} B. Cassano; V. Franceschi; D. Krejčiřík; D. Prandi; \textit{Horizontal magnetic fields and improved Hardy inequalities in the Heisenberg group}, Comm. Partial Differential Equations, vol. 48, no. 5, pg. 711–752, 2023.
\bibitem{cazacuHR} C. Cazacu; \textit{A new proof of the Hardy-Rellich inequality in any dimension}; Proc. Roy. Soc. Edinburgh Sect. A, 150, no. 6, pg. 2894–2904, 2020.
\bibitem{cazacu} C. Cazacu, \textit{The method of super-solutions in Hardy and Rellich type inequalities in the $L^2$ setting: an overview of well-known results and short proofs}; Rev. Roumaine Math. Pures Appl. 66, no. 3-4, pg. 617–638, 2021.
\bibitem{cazacu-boundary} C. Cazacu; \textit{Schrodinger operators with boundary singularities: Hardy inequality, Pohozaev identity and controllability results}, J. Funct. Anal., vol. 263, no. 12, pg. 3741–3783, 2012.
\bibitem{davies-hinz} E. Davies, A. Hinz; \textit{Explicit constants for Rellich inequalities in $L^p(\Om)$}, Math. Z. 227, pg. 511–523, 1998.
\bibitem{drissi} A. Drissi, A. Ghanmi, D. Repovs; \textit{Singular $p$-Biharmonic Problems Involving the Hardy-Sobolev Exponent}, Electron. J. Differential Equations, vol. 2023, no. 61, pg. 1-12, 2023.
\bibitem{edmunds} D. E. Edmunds, W. D. Evans; \textit{The Rellich inequality}, Rev. Mat. Complut., vol. 29, no. 3, pg. 511–530, 2016.
\bibitem{evansCarte} L.C. Evans; \textit{Partial differential equations}, Graduate Studies in Mathematics, vol. 19, Amer. Math. Soc., Providence, RI, 1998.
\bibitem{evans} W.D. Evans, D.J. Harris; \textit{Sobolev embeddings for generalized ridged domains}, Proc. Lond. Math. Soc., vol. 54, pg. 141–175, 1987.
\bibitem{mmfall} M. M. Fall; \textit{A note on Hardy’s inequalities with boundary singularities}, Nonlinear Anal., vol. 75, no. 2, pg. 951-963, 2010.
\bibitem{fillipas1} S. Filippas, V. Maz’ya, A. Tertikas; \textit{On a question of Brezis and Marcus}, Calc. Var. Partial Differential Equations, vol. 25, pg. 491–501, 2006.
\bibitem{fremlin} D. Fremlin; \textit{Skeletons and central sets}, Proc. Lond. Math. Soc., vol. 74, iss. 3, pg. 701–720, 1997.
\bibitem{federer} H. Federer; \textit{Curvature measures}, Trans. Amer. Math. Soc., vol. 93, no. 3, pg. 418–491, 1959.
\bibitem{azorero-alonso} J. P. Garcia Azorero, I. Peral Alonso, \textit{Hardy Inequalities and Some Critical Elliptic and Parabolic Problems}, J. Differential Equations, 144, pg. 441-476, 1998.
\bibitem{gilbardtrud}  D. Gilbarg, N.S. Trudinger; \textit{Elliptic Partial Differential Equations of Second Order}, Springer-Verlag, 2001.
\bibitem{mihailescu} A. Grecu, M. Mihăilescu; \textit{The torsion problem of the $p$-Bilaplacian}, Nonlinear Anal. Real World Appl., vol. 79, 2024.
\bibitem{pigola} B. Güneysu, S. Pigola; \textit{The Calderón–Zygmund inequality and Sobolev spaces on noncompact Riemannian manifolds}, Adv. Math., vol. 281, pg. 353–393, 2015.
\bibitem{hardy} G. Hardy, J. E. Littlewood, G. Polya; \textit{Inequalities}, Cambridge Univ. Press,
Cambridge, UK, 1934.
\bibitem{hoffman} M.Hoffmann–Ostenhof,T.Hoffmann–Ostenhof, A.Laptev; \textit{A geometrical version of Hardy’s inequality}, J. Funct. Anal., vol. 189, pg. 539–548, 2002.
\bibitem{iwaniec}  T. Iwaniec, G. Martin; \textit{Riesz transforms and related singular integrals}, J. Reine Angew. Math., vol. 473, pg. 25–57, 1996.
\bibitem{kombe} I. Kombe, M. Ozaydin; \textit{Improved Hardy and Rellich inequalities on Riemannian manifolds}, Trans. Amer. Math. Soc., vol. 361, no. 12, pg. 6191–6203, 2009.
\bibitem{opic} A. Kufner, B. Opic; \textit{Hardy-type Inequalities}, Pitman Research Notes in Math., vol. 219, Longman 1990. 
\bibitem{lam1} N. Lam; \textit{Hardy and Hardy–Rellich type inequalities with Bessel pairs}, Ann. Acad. Sci. Fenn. Math., vol. 43, pg. 211–223, 2018.
\bibitem{lam2} N. Lam, G. Lu, L. Zhang; \textit{Geometric Hardy’s inequalities with general distance functions}, J. Funct. Anal., vol. 279, iss. 8, 2020.
\bibitem{lewis} R.T. Lewis, J. Li, Y. Li; \textit{A geometric characterization of a sharp Hardy inequality}, J. of Funct. Anal., vol. 262, pg.3159-3185, 2012.
\bibitem{lindqvist} P. Lindqvist; {\it Notes on the $p$-Laplace equation}, Report. University of Jyv\"askyl\"a{} Department of Mathematics and Statistics, 102, Univ. Jyv\"askyl\"a, Jyv\"askyl\"a, 2006.
\bibitem{sobolevskii} T. Matskewich, P. E. Sobolevskii; \textit{The best possible constant in generalized Hardy's inequality for convex domain in $\R^n$}; Nonlinear Anal., Theory, Methods \& Appl., vol. 28, no. 8, pg. 1601-1610, 1997. 
\bibitem{pinchover} M. Marcus, V. Mizel, Y. Pinchover; \textit{On the best constant for Hardy's inequality in $\R^n$}, Trans. Amer. Math. Soc., vol. 350, no. 8, pg. 3237–3255, 1998.
\bibitem{mantegazza} C. Mantegazza, A.C. Mennucci; \textit{Hamilton-Jacobi equations and distance functions on
Riemannian manifolds}, Appl. Math. Optim., vol. 47, pg. 1–25, 2003.
\bibitem{mawhin} J. Mawhin, M. Willem; \textit{Origin and evolution of the Palais–Smale condition in critical point theory}, J. Fixed Point Theory Appl., vol. 7, pg. 265–290, 2010.
\bibitem{metafune} G. Metafune, M. Sobajima, C. Spina; \textit{Weighted Calderon-Zygmund and Rellich inequalities in $L^p$}, Math. Ann. vol. 361, pg. 313-366, 2015.
\bibitem{mitidieri} E. Mitidieri; \textit{A Rellich type identity and Applications}, Comm. Partial Differential Equations, vol. 18, no. 1-2, pg. 125–151, 1993.
\bibitem{owen} M.P. Owen; \textit{The Hardy-Rellich inequality for polyharmonic operators}, Proc. Roy. Soc. Edinburgh Sect. A, vol. 129A, pg. 825–839, 1999.
\bibitem{peral-monograph} I. Peral, F. Soria; \textit{Elliptic and Parabolic Equations Involving the Hardy-Leray Potential}, De Gruyter Series in Nonlinear Analysis and Appl., vol. 38, 2021. 
\bibitem{pichorides}  S.K. Pichorides; \textit{On the best values of the constants in the theorems of M. Riesz, Zygmund and
Kolmogorov}, Studia Math., vol. 44, pg. 165–179, 1972.
\bibitem{rellich} F. Rellich; \textit{Halbeschränkte Differentialoperatoren höherer Ordnung}, Proceedings of the International Congress of Mathematicians 1954, vol. III, pg.243–250, 1956. 
\bibitem{shafrir}  I. Shafrir, \textit{Asymptotic behaviour of minimizing sequences for Hardy’s inequality}. Commun. Contemp. Math., vol. 2, no. 2, pg. 151–189, 2000.
\bibitem{tertikas} A. Tertikas, N.B. Zographopoulos; \textit{Best constants in the Hardy-Rellich inequalities and related improvements}; Adv. Math, vol. 209, no. 2, pg. 407-459, 2007.
\bibitem{tidblom} J. Tidblom; \textit{$L^p$ Hardy inequalities in general domains}, PhD thesis, Department of Mathematics, Stockholm University, 2003.
\bibitem{wang} H. Xie, J. Wang; \textit{Infinitely many solutions for $p$-harmonic equation with singular term}, J. Inequal. Appl., vol. 2013, no. 9, 2013.
\end{thebibliography}
\end{document}